\numberwithin{equation}{section}
\newtheorem{theorem}{Theorem}[section]
\newtheorem{lemma}[theorem]{Lemma}%
\newtheorem{remark}[theorem]{Remark}%
\newtheorem{definition}[theorem]{Definition}%
\def\mbb{\mathbb}
\def\mcl{\mathcal}
\def\mbf{\mathbf}
\def\e{{\mathrm{e}}}
\def\d{\mathrm{d}}
\def\S{\mathbb{S}}
\def\P{\mathbb{P}}
\begin{document}
	\title{Well-posedness and Feller property for functional stochastic Hamiltonian systems with singular coefficients and state-dependent switching$^{\star}$}
	
	\date{}
	\author{Fubao Xi$^{1}$, Yafei Zhai$^{2}$$^{*}$ and Zuozheng Zhang$^{2}$}
\dedicatory{
	$^1$
	Institute for Mathematics and Interdisciplinary Sciences, Beijing Institute of Technology, Zhuhai, 519088, P.R.China\\
	$^2$ School of Mathematics and Statistics,
	Beijing Institute of Technology, 
	Beijing, 100081,  P.R.China}
	\thanks{$^{\star}$  Supported in part by the National Natural Science Foundation of China under Grant No. 12071031.}
	\thanks{$^{*}$ Corresponding author.}
	\thanks{E-mail: xifb@bit.edu.cn (F. Xi), yafeizhai@bit.edu.cn(Y. Zhai), zuozhengzhang@mail.bnu.edu.cn(Z. Zhang) }	
	
	
	\begin{abstract}
		This work focuses on a class of functional stochastic Hamiltonian systems with singular coefficients and state-dependent switching, in which the switching process has a countably infinite state space. First, by Girsanov’s transformation, we establish the martingale solution of the system in each fixed switching case. Then, since the two components of the system are intertwined and correlated, we investigate the special case when the discrete component is independent of the continuous component. Based on these results, we obtain the well-posedness of the system with the aid of a martingale process. Finally, in order to establish the Feller property, we take advantage of appropriate Radon-Nikodym derivative and introduce a vector valued elliptic equation to handle the effect of the singular coefficients.\\
		{\it AMS Mathematics Subject Classification (2020):} 60J27, 60J60, 34K50, 34K34	\\
		{\it Keywords: }Stochastic Hamiltonian system; Regime-switching; Infinite delay; H\"older continuity; Weak solution; Feller property.\\
		
	\end{abstract}
	\maketitle
	\rm
	
	\section{Introduction}
	Motivated by the increasing need of modeling complex systems, designing optimal control, and conducting optimization tasks, the hybrid system has been proposed and developed in many works \cite{Mao2006,Yin2010}. It allows to describe the random environment that is otherwise not representable by the traditional differential equations. Additionally, the real system often depends not only on the present state but also on some of its past history. Functional stochastic differential equations have received considerable attention due to their broad applicability in diverse fields, including process control, biomedical sciences, epidemics, transport, communication networks, and population dynamics \cite{Kushner2008,Gopalsamy1992,Hale1993}. Moreover, when systems are influenced by the whole past history, it is necessary to consider systems with infinite delay. Furthermore, Hamiltonian systems subject to random perturbations are particularly interesting as many real mechanical and physical systems are unavoidably influenced by random noises. The noise is degenerate because it acts directly only on the velocity but not position. Hamiltonian systems have a wide range of applications and are commonly used as models for virtually all fields of mechanics and physics \cite{Feng2013,zhangX2010}. Based on the above considerations, this paper focuses on a functional stochastic Hamiltonian system with state-dependent switching. Suppose $Z(t):=(X(t), Y(t))$ is governed by the following stochastic differential equation
	\begin{equation}\label{eq:zaikan1}
		\left\{
		\begin{aligned}
			\text{d}X(t)&=\left(aX(t)+bY(t)\right)\text{d}t,\\
			\text{d}Y(t)&=\left[ b_1(X_t,Y_t,\Theta(t))+b_2(X(t),Y(t),\Theta(t))\right]\text{d}t+\sigma(X(t),Y(t),\Theta(t))\text{d}B(t),
		\end{aligned}
		\right.
	\end{equation}
	with initial data $Z_0=\varphi \in \mcl C_r^{2d}$ and $\Theta(0)=i \in \mbb S:=\{1,2,\cdots\}$, where $a\geq0$ and $b\neq0$, $b_1:\mcl C_r^{2d} \times \mbb S\rightarrow \mathbb{R}^d,b_2: \mbb R^{2d}\times \mbb S \rightarrow \mathbb{R}^d$ and  $\sigma: \mbb R^{2d} \times \mbb S \rightarrow \mathbb{R}^d \otimes \mathbb{R}^d$ are Borel measurable functions. $X_t(\theta):=X(t+\theta)$ and  $Y_t(\theta):=Y(t+\theta),-\infty<\theta \leq 0$ are the segment process of $X(t)$ and $Y(t)$ respectively, $B(t)$ is an $d$-dimensional standard Brownian motion on a complete filtration probability space $\left(\Omega, \mathscr{F},\left(\mathscr{F}_t\right)_{t \geq 0}, \mathbb{P}\right)$, and for a fixed number $r\in (0,\infty)$,
	$$
	\begin{gathered}
	{ 	\mcl C_r^{2d}:=\left\{\varphi \in C((-\infty, 0] ; \mathbb{R}^{2d}) ; \lim _{\theta \rightarrow-\infty} \mathrm{e}^{r \theta} \varphi(\theta) \text { exists in } \mathbb{R}^{2d}\right\}.}
	\end{gathered}
	$$
	Then $(\mathcal{C}_r^{2d},\|\cdot\|_r)$ is a Polish space (see \cite{hino1991} for more details). The norm $\|\cdot\|_r$ fits the intuition of exponential decay with regard to the influence of history; that is, the contribution to the norm from the history at time $\theta<0$ has a minus exponential discount $\mathrm{e}^{r \theta}$.
	
	The second component $\Theta(t)$ is a right continuous random process with a countably infinite state space $\mbb S$ given by
	\begin{equation}\label{eq:zaikan2}
		\mbb {P}\{\Theta(t+\Delta)=l \mid \Theta(t)=k, Z_t=\varphi \}= \begin{cases}q_{kl}(\varphi) \Delta+o(\Delta), & l \neq k ,\\ 1+q_{kk}(\varphi) \Delta+o(\Delta), & l=k,\end{cases}
	\end{equation}
	provided $\Delta \downarrow 0$, where $q_{kl}(\varphi) \geq 0$ is the transition rate from $k$ to $l$, if $l \neq k$, and  $q_{kk}(\varphi)=-\sum_{l \neq k} q_{kl}(\varphi)$ for every $k\in\mbb S$.
	
	As a typical model of degenerate diffusion system, the stochastic Hamiltonian system has been studied by many authors. For example, Wu \cite{wu2001} considers the large deviation principle for stochastic Hamiltonian systems by constructing an appropriate Lyapunov test function. Xi, Zhu and Wu \cite{xi2021} investigate the strong Feller property and exponential ergodicity for stochastic Hamiltonian systems. Wang \cite{wang2017} studies the hypercontractivity for the Markov semigroup associated with a class of stochastic Hamiltonian systems. However, the stochastic process of these articles does not involve delay argument. This means that the future state of the system is determined only by the present and is not affected by the past history. But a more realistic model usually includes some of the past history of the system such as predator-prey models. We call this model functional stochastic differential equations (FSDEs). FSDEs provide powerful mathematical tools in modeling and analyzing complex memory-dependent dynamical systems. The theory for such processes has been extensively developed recently (see \cite{Mohammed1984,Mao2007,McShane1969,Mao1996,mao2000,Shen2006}). For literature on existence of solution for FSDEs, we refer to Mohammed \cite{Mohammed1984} for FSDEs with finite delay and Wu et. al. \cite{wu2017} for FSDEs with infinite delay on appropriate phase space. Moreover, they also show that the segment process admits  Markov property and  ergodicity. In addition, Bao et al. \cite{bao14,BAO2014} investigate ergodicity under different conditions. Zhai and Xi \cite{Zhai2024} explore exponential ergodicity for several kinds of FSDEs. These papers study the non-degenerate FSDEs with regular coefficients. So far, the research on the degenerate FSDEs with singular coefficients is limited. 
	
	On the other hand, in many real-world systems, the classical models using solely stochastic differential equations are frequently inadequate since the behavior of the complex system would be affected by the random environment and other stochastic influences. As a result, the switching model is more versatile and has a wider range of applicability. Recently, regime-switching diffusion processes are extensively proposed and investigated in many fields (see \cite{Mao2006,Yin2010,Bao2016,Bao2020,mao2013}). Precisely, the regime-switching diffusion process has two components, including continuous component and discrete component. When discrete component is independent of continuous component, the process is called a diffusion process with Markovian switching. When discrete component is dependent of continuous component, the process is called a diffusion process with state-dependent switching. Nevertheless, owing to the addition of the state-dependent switching, the problem is more complicated. So far, there are several approaches to explore the existence and uniqueness of strong solutions for regime-switching diffusion processes; see, for instance,  Xi \cite{xi2009} by successive construction methods and Xi and Zhu \cite{Xi2017} via ``interlacing procedure". In addition, Nguyen and Yin \cite{Nguyen2016} also establish the existence and uniqueness of strong solutions of the associated stochastic differential equation under different conditions. However, there are few studies on the weak solutions of such systems.
	
	In this paper, we investigate a class of functional stochastic Hamiltonian systems with singular coefficients and state-dependent switching \eqref{eq:zaikan1}-\eqref{eq:zaikan2}. To be specific, we are interested in the situation that the coefficient $b_2(\cdot, k)$ is H\"older continuous in \eqref{eq:zaikan1} for every $k\in\mbb S$.  To obtain the existence and uniqueness of the martingale solution for the process \eqref{eq:zaikan1}-\eqref{eq:zaikan2} (see Definition \ref{defn1} below), we first show the existence and uniqueness of the martingale solution for the process in each fixed switching by Girsanov’s transformation. Then, since the two components of functional stochastic Hamiltonian systems \eqref{eq:zaikan1}-\eqref{eq:zaikan2} are intertwined and correlated, we look at the special case when the discrete component $\Theta(t)$ is independent of the continuous component $Z_t$, and obtain the existence and uniqueness of the martingale solution to \eqref{eq:zaikan1}-\eqref{eq:zaikan2} with the aid of a martingale process $M_t$ defined in \eqref{eq:buyi} below. Finally, we establish the Feller property of the process $(Z_t, \Theta(t))$. For this purpose, we introduce a vector valued elliptic equation \eqref{eq:elliptic} to handle the effect of the singular coefficients $b_2$. For more details concerning the transformation, the reader can refer to \cite{Flandoli2010}.
	
	The rest of the paper is arranged as follows. In Section 2, we present some frequently used notations as well as necessary assumptions. In Section 3, we show the existence and uniqueness of the martingale solution for \eqref{eq:zaikan} below. The well-posedness of the system \eqref{eq:zaikan1}-\eqref{eq:zaikan2} is divided into two parts. In Section 4, we investigate the special case, i.e., the Markovian switching case. In Section 5, we deal with the general case, i.e., the state-dependent switching case. In Section 6, we establish the Feller property. Finally, we make the concluding remarks in Section 7.
	
	\section{Preliminaries}
	To facilitate the later presentation, we introduce some frequently used notations here.  For each strictly positive integer ${n}$, let ${\mathbb{R}^{n}}$ be an ${n}$-dimensional Euclidean space endowed with the inner product ${\langle u, v\rangle:=\sum_{i=1}^{n} u^{i} v^{i}}$ for ${u,v \in \mathbb{R}^{n}}$ and the Euclidean norm ${|u|:=\langle u, u\rangle^{1 / 2}}$ for ${u \in \mathbb{R}^{n}}$. Let ${\mathbb{R}^{n} \otimes \mathbb{R}^{m}}$ denote the collection of all ${n \times m}$ matrices with real entries. Given an ${n \times m}$ matrix ${A}$, $\|A\|_{\rm HS}$ denotes the Hilbert-Schmidt  norm of ${A}$. Let $A$ be a vector or a matrix and $A^{T}$ denotes its transpose.
	
	Define a metric \( \lambda(\cdot, \cdot) \) on \( \mathcal{C}_r^{2 d} \times \mathbb{S} \) as
	\[
	\lambda((\varphi, k),(\psi,l))=\|\varphi-\psi\|_r+\textbf{1}_{\{k \neq l\}} ,
	\]
	for $(\varphi,k),(\psi,l)\in\mathcal{C}_r^{2 d} \times \mathbb{S}$.  Let \( \mathcal{B}(\mcl C_r^{2d} \times \mathbb{S}) \) be the Borel \( \sigma \)-algebra on \( \mcl C_r^{2d} \times \mathbb{S} \). Hence, we know that \( (\mcl C_r^{2d} \times \mathbb{S}, \lambda(\cdot, \cdot), \mathcal{B}(\mcl C_r^{2d} \times \mathbb{S})) \) is a locally compact and separable metric space. As usual, let \( C([0, \infty), \mcl C_r^{2d}) \) be the continuous function space endowed with the sup norm topology and \( D([0, \infty), \mathbb{S}) \) be the càdlàg space endowed with the Skorohod topology. Moreover, let \( \Omega:=C([0, \infty), \mcl C_r^{2d}) \times D([0, \infty), \mathbb{S}) \) be endowed with the product topology of the sup norm topology on \( C([0, \infty), \mcl C_r^{2d}) \) and the Skorohod topology on \( D([0, \infty), \mbb S) \). Let \( \mathcal{F}_{t} \) be the \( \sigma \)-field generated by the cylindrical sets on \( \Omega \) up to time \( t \) and set \( \mathcal{F}=\bigvee_{t=0}^{\infty} \mathcal{F}_{t} \). Next, let \( C_{c}^{2}(\mbb{R}^{2 d} \times \mathbb{S}) \) denote the family of functions defined on \( \mbb{R}^{2 d} \times \mathbb{S} \) such that \( f(\cdot, k) \in C_{c}^{2}(\mbb{R}^{2 d}) \) for each \( k \in \mathbb{S} \), and \( f(z, \cdot) \) is a bounded function on \( \mbb S \) for each \( z \in \mbb{R}^{2 d} \), where \( C_{c}^{2}(\mbb{R}^{2 d}) \) denotes the family of functions defined on \( \mbb{R}^{2 d} \) which are twice continuously  differentiable and have compact support. When there is no special description in what follows, we denote $\varphi(0)=(\varphi_1(0),\varphi_2(0))$, $\psi(0)=(\psi_1(0),\psi_2(0))$ and  $\varphi=(\varphi_1,\varphi_2)$, $\psi=(\psi_1,\psi_2)$ for $\varphi,\psi\in \mcl C_r^{2d}$. Additionally, $C>0$ is a generic constant whose value may be different from line to line in the sequel.
	
	We recall the notion of martingale problem for the operator \( \mathcal{A} \) corresponding to the system \eqref{eq:zaikan1}-\eqref{eq:zaikan2} in this section.	Define the operator $\mcl A$ corresponding to the system \eqref{eq:zaikan1}-\eqref{eq:zaikan2} as follows:
	\begin{equation}\label{eq:operator}
		(\mathcal{A} f)(\varphi, k):=\mathbb{L}_{k}(\varphi) f(\varphi(0),k)+Q(\varphi) f(\varphi(0), k) ,
	\end{equation} 
	for any $f\in C_{c}^{2}(\mbb{R}^{2 d} \times \mathbb{S})$ and $(\varphi,k)\in\mathcal{C}_r^{2 d} \times \mathbb{S}$. Here, for each \( k \in \mathbb{S}\), \(\mathbb{L}_{k}(\varphi) \) is a differential operator defined by
	\begin{equation}\label{eq:operatork}
		\begin{aligned}
			\mathbb{L}_{k}(\varphi)f(\varphi(0), k) :=&\frac{1}{2} \operatorname{tr}\left(a(\varphi(0), k) \nabla_{y}^{2} f(\varphi(0), k)\right)+\left\langle a\varphi_1(0)+b\varphi_2(0), \nabla_{x} f(\varphi(0), k)\right\rangle \\
			&+\left\langle b_1(\varphi, k)+b_2(\varphi(0), k), \nabla_{y} f(\varphi(0), k)\right\rangle,
		\end{aligned}
	\end{equation}
	and the switching operator \( Q(\varphi) \) is defined as follows:
	\begin{equation}\label{eq:operatorq}
		Q(\varphi) f(\varphi(0), k):=\sum_{l \in \S} q_{kl}(\varphi)(f(\varphi(0), l)-f(\varphi(0),  k)) .
	\end{equation}
	Here and hereafter, \( a(\varphi(0), k)=\sigma(\varphi(0), k) \sigma(\varphi(0), k)^{T}, \nabla_x \) and $\nabla_y$ represent the gradient of functions with respect to the variables $x$ and $y$ respectively,  and  \( \nabla_y^{2} \) denotes the Hessian matrix of functions with respect to the variable $y$. 
	\begin{definition}\label{defn1}
		For a given \( (\varphi, k) \in \mathcal{C}_r^{2 d} \times \mathbb{S} \), we say a probability measure \( \mathbb{P}^{(\varphi, k)} \) on \(  C([0, \infty), \mcl C_r^{2d})\times D([0, \infty), \mathbb{S}) \) is a solution to the martingale problem for the operator \( \mathcal{A} \) starting from \( (\varphi, k) \), if \( \mathbb{P}^{(\varphi, k)}\{(Z_0, \Theta(0))=(\varphi, k)\}=1 \) and for each function \( f \in C_{c}^{2}(\mbb{R}^{2 d} \times \S) \),
		\begin{equation}\label{eq:Mf}
			M_{t}^{(f)}(Z_{\cdot}, \Theta(\cdot)):=f(Z(t), \Theta(t))-f(Z(0), \Theta(0))-\int_{0}^{t} (\mathcal{A} f)(Z_s, \Theta(s)) \mathrm{d} s
		\end{equation}
		is an \( \left\{\mathcal{F}_{t}\right\} \)-martingale with respect to \( \mathbb{P}^{(\varphi, k)} \), where \( (Z_t, \Theta(t)) \) is the coordinate process defined by \( (Z_t( \omega), \Theta(t, \omega))=\omega(t) \in \mcl C_r^{2 d} \times \mathbb{S} \) for all \( t \geq 0 \) and \( \omega \in \Omega \). Sometimes, we simply say that the probability measure \( \mathbb{P}^{(\varphi, k)} \) is a martingale solution for the operator \( \mathcal{A} \) starting from \( (\varphi, k) \)  to the system \eqref{eq:zaikan1}-\eqref{eq:zaikan2} with initial data \( (\varphi, k) \).	\end{definition}
	Before stating the main results, we firstly impose some preliminary assumptions in this section. These assumptions will be crucial for our later developments.
	
	\begin{enumerate} 
		\item [\textbf{(A1)}] For any $k\in\S$,  $b_1(\cdot,k)$ is bounded. 
		\item [\textbf{(A2)}] $b_2$ is H\"older continuous with the H\"older exponent $\alpha\in(0,1)$ and the H\"older constant $L_2>0$, i.e., 
		\begin{equation}
			|b_2(\varphi(0),k)-b_2(\psi(0),k)|\leq L_2|\varphi(0)-\psi(0)|^{\alpha}, \quad \varphi,\psi\in\mcl C_r^{2d}, k\in\S.
		\end{equation}
		\item [\textbf{(A3)}] For any $k\in\S$, the random perturbation $\sigma(\cdot,k)$ is symmetric. 
		Moreover, there exists a positive constant $\hat{\sigma}_k$ such that $0<\sigma(x,k)\leq\hat{\sigma}_k I$ over $\mbb R^{2d}$, where $\sigma(x,k)>0$ means that it is strictly positive definite and $I$ is the $d$-dimensional identity matrix.
		\item [\textbf{(A4)}]The formal operator of the switching process $Q(\varphi):=(q_{kl}(\varphi))$ is a matrix-valued measurable function on $\mcl C_r^{2d}$ and there exists a constant $H\geq 0$ such that
		\begin{equation}\label{eq:asnq}
			\sup_{k\in\mbb S}\sum_{l\neq k}\sup_{\varphi\in\mcl C_r^{2d}}q_{kl}(\varphi)\leq H.
		\end{equation}
	\end{enumerate}

	\section{Martingale solution: fixed switching case}
	In order to prove the existence and uniqueness of the martingale solution for the system \eqref{eq:zaikan1}-\eqref{eq:zaikan2}, we now introduce a family of processes. For each $k \in \S$, let $Z^{(k) }(t) := (X^{(k)}(t), Y^{(k)}(t))$ satisfy the following functional stochastic differential equation
	\begin{equation}\label{eq:zaikan}
		\left\{
		\begin{aligned}
			\text{d}X^{(k)}(t)&=\left(aX^{(k)}(t)+bY^{(k)}(t)\right)\text{d}t,\\
			\text{d}Y^{(k)}(t)&=\left[ b_1(X^{(k)}_t,Y^{(k)}_t,k)+b_2(X^{(k)}(t),Y^{(k)}(t),k)\right]\text{d}t+\sigma(X^{(k)}(t),Y^{(k)}(t),k)\text{d}B(t).
		\end{aligned}
		\right.
	\end{equation}

	In this section, we shall establish the existence and uniqueness of the martingale solution for \eqref{eq:zaikan} based on Girsanov's transformation. To achieve this goal, we introduce the following reference functional stochastic differential equation:
	\begin{equation}\label{eq:fuzhu}
		\left\{
		\begin{aligned}
			\text{d}\widetilde{X}^{(k)}(t)&=\left(a\widetilde{X}^{(k)}(t)+b\widetilde{Y}^{(k)}(t)\right)\text{d}t,\\
			\text{d}\widetilde{Y}^{(k)}(t)&=b_1(\widetilde{X}^{(k)}_t,\widetilde{Y}^{(k)}_t,k)\text{d}t+\sigma(\widetilde{X}^{(k)}(t),\widetilde{Y}^{(k)}(t),k)\text{d}B(t).
		\end{aligned}
		\right.
	\end{equation}
	The differential operator \({\mathcal{\widetilde L}}^{(k)}(\varphi) \) of this equation is defined as follows: for each \( f \in C_{c}^{2}(\mbb{R}^{2 d}) \),
	\begin{equation}
		\begin{aligned}
			\mathcal{\widetilde L}^{(k)}(\varphi) f(\varphi(0)):=&\frac{1}{2} \operatorname{tr}\left(a(\varphi(0), k) \nabla_{y}^{2} f(\varphi(0))\right)+\left\langle a\varphi_1(0)+b\varphi_2(0), \nabla_{x} f(\varphi(0))\right\rangle \\
			&+\left\langle b_1(\varphi, k), \nabla_{y} f(\varphi(0))\right\rangle.
		\end{aligned}
	\end{equation}
	
	{Under $\textbf{(A1)}$ and $\textbf{(A3)}$, using a similar argument with \cite[Lemma 1.1 ]{wu2001}, it is easy to check that  \eqref{eq:fuzhu} has a unique weak solution $\widetilde{Z}^{(k)}(t):=(\widetilde{X}^{(k)}(t), \widetilde{Y}^{(k)}(t))$ with the initial value $\widetilde{Z}_0^{(k)}=\varphi$.} Let  $\widetilde{\mbb P}^{(k)}$ be the corresponding law of the solution. Before we move forward to achieve the primary goal, let us prepare a warm-up lemma.
	
	\begin{lemma}\label{lem:Novikov}
		Suppose that $\textbf{(A1)-(A3)}$ hold. Then, for any $\lambda, T>0$,
		\begin{align*}
			\mbb E_{\widetilde{\mbb P}^{(k)}}\exp\left\{\lambda\int_{0}^{T}\left|\sigma^{-1}(\widetilde{Z}^{(k)}(t),k)b_2(\widetilde{Z}^{(k)}(t),k)\right|^2\d t\right\}<\infty.
		\end{align*}
	\end{lemma}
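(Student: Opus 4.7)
The goal is to establish a Novikov-type estimate for the reference system \eqref{eq:fuzhu}. My plan is to combine the sub-linear growth of $b_2$ inherited from \textbf{(A2)} with exponential moment bounds on $\widetilde{Z}^{(k)}$ that come from the boundedness of $b_1$ and $\sigma$, exploiting in a critical way that the H\"older exponent $\alpha$ is strictly less than $1$.

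First, setting $\psi(0)=0$ in \textbf{(A2)} yields $|b_2(z,k)|\leq|b_2(0,k)|+L_2|z|^{\alpha}$. Together with the uniform ellipticity of $\sigma(\cdot,k)$ encoded in \textbf{(A3)}, this produces a pointwise bound of the form
$$\bigl|\sigma^{-1}(z,k)b_2(z,k)\bigr|^{2}\leq C_k\bigl(1+|z|^{2\alpha}\bigr),$$
with $2\alpha<2$. By Jensen's inequality applied to the probability measure $T^{-1}\,\d t$ on $[0,T]$, the claim therefore reduces to showing
$$\sup_{0\leq t\leq T}\mbb E_{\widetilde{\mbb P}^{(k)}}\exp\bigl\{\mu\,|\widetilde{Z}^{(k)}(t)|^{2\alpha}\bigr\}<\infty$$
for every $\mu>0$.

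Next I would exploit the Hamiltonian structure of \eqref{eq:fuzhu}. Solving the first equation explicitly gives
$$\widetilde{X}^{(k)}(t)=\e^{at}\varphi_{1}(0)+b\int_{0}^{t}\e^{a(t-s)}\widetilde{Y}^{(k)}(s)\,\d s,$$
so on $[0,T]$ one has $|\widetilde{X}^{(k)}(t)|\leq C_T\bigl(1+\sup_{s\leq T}|\widetilde{Y}^{(k)}(s)|\bigr)$, and the task boils down to bounding exponential moments of $\sup_{s\leq T}|\widetilde{Y}^{(k)}(s)|^{2\alpha}$. Decomposing
$$\widetilde{Y}^{(k)}(t)=\varphi_{2}(0)+\int_{0}^{t}b_{1}(\widetilde{Z}_{s}^{(k)},k)\,\d s+\int_{0}^{t}\sigma(\widetilde{Z}^{(k)}(s),k)\,\d B(s),$$
the first two terms are deterministically bounded on $[0,T]$ by \textbf{(A1)}, while the stochastic integral $M_t$ is a continuous martingale whose quadratic variation is dominated by $\hat{\sigma}_{k}^{2}d\,t$. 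Applying the exponential martingale inequality to $M$ gives sub-Gaussian tails of the form $\mbb P\bigl(\sup_{s\leq T}|M_s|>r\bigr)\leq C\e^{-cr^{2}}$.

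Because $2\alpha<2$, integrating these Gaussian-type tails produces $\mbb E\exp\{\mu\sup_{s\leq T}|M_s|^{2\alpha}\}<\infty$ for every $\mu>0$, which propagates back through the bounds above to yield the required finiteness and close the argument. The only delicate point is the degeneracy of the system: noise enters only through $\widetilde{Y}^{(k)}$, and $\widetilde{X}^{(k)}$ inherits its tail behaviour via the ODE coupling. It is precisely the strict inequality $\alpha<1$ in \textbf{(A2)} that makes the sub-quadratic concentration of $\widetilde{Y}^{(k)}$ strong enough to absorb the time-integrated exponential; any attempt to push the argument to $\alpha=1$ would fail at this step.
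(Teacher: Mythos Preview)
Your argument has a genuine gap at the very first step. Assumption \textbf{(A3)} states that $\sigma(\cdot,k)$ is symmetric, \emph{strictly positive definite}, and bounded above by $\hat{\sigma}_k I$; it does \emph{not} provide any uniform lower bound on $\sigma$. Consequently $\|\sigma^{-1}(z,k)\|$ need not be bounded in $z$, and the pointwise estimate $|\sigma^{-1}(z,k)b_2(z,k)|^{2}\leq C_k(1+|z|^{2\alpha})$ is unjustified. Everything downstream (Jensen, the sub-Gaussian tail bound, the final integration) rests on this inequality, so the proof does not close as written. You may be thinking of the hypothesis \textbf{(H2)} from the Feller section, but that is not assumed here.

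The paper confronts exactly this difficulty by localisation: it introduces the stopping time $\tau_R=\inf\{t\geq0:|\widetilde{Y}^{(k)}(t)|=R\}$, observes that the ODE for $\widetilde{X}^{(k)}$ then confines $\widetilde{Z}^{(k)}$ to a compact set on $[0,T\wedge\tau_R]$, and extracts a local lower bound $a(R,T,\varphi)>0$ on the smallest eigenvalue of $\sigma$ there. Only on this compact set can one control $\|\sigma^{-1}\|$. The paper then combines this with a small-parameter quadratic exponential moment $\mbb E\exp\{\mu\int_0^T|\widetilde Z^{(k)}(t)|^2\,\d t\}<\infty$, obtained via It\^o's formula applied to $\exp\{\varepsilon\e^{-\gamma t}|\widetilde Z^{(k)}(t)|^2\}$ and Gronwall, together with Young's inequality $|z|^{2\alpha}\leq\varepsilon|z|^2+C_\varepsilon$ (this is where $\alpha<1$ enters for the paper), and finally shows $\tau_R\to\infty$ a.s.

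If you graft the same localisation onto your scheme, the remainder of your approach---solving the ODE for $\widetilde X^{(k)}$, decomposing $\widetilde Y^{(k)}$, and using sub-Gaussian tails of the bounded-quadratic-variation martingale to get $\mbb E\exp\{\mu\sup_{s\le T}|\widetilde Z^{(k)}(s)|^{2\alpha}\}<\infty$ for every $\mu>0$---is a legitimate and somewhat more elementary alternative to the paper's It\^o--Gronwall computation. Both routes exploit $\alpha<1$ in an essential way: the paper through the arbitrarily small $\varepsilon$ in Young's inequality, you through the integrability of $\e^{\mu r^{2\alpha}}$ against a Gaussian tail.
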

	\begin{proof}
		Simply write  $\mbb E=\mbb E_{\widetilde{\mbb P}^{(k)}}$. First, we show 
		\begin{equation}\label{eq:bounded}
			\mbb E\exp\left\{\mu\int_{0}^{T}\left(|\widetilde{X}^{(k)}(t)|^2+|\widetilde{Y}^{(k)}(t)|^2\right)\d t\right\}<\infty,
		\end{equation}
		where $0<\mu<\frac{\e^{-(2a+|b|+1)T-1}}{2d\hat{\sigma}_k^2T^2}$. Applying Jensen's inequality, we have
		\begin{equation}\label{eq:boundedguji}
			\begin{aligned}
				&\mbb E\exp\left\{\mu\int_{0}^{T}\left(|\widetilde{X}^{(k)}(t)|^2+|\widetilde{Y}^{(k)}(t)|^2\right)\d t\right\}\leq \frac{1}{T}\int_{0}^{T}\mbb E\e^{\mu T(|\widetilde{X}^{(k)}(t)|^2+|\widetilde{Y}^{(k)}(t)|^2)}\d t.
			\end{aligned}
		\end{equation}
		Next, by It\^o's formula, it follows from $\textbf{(A1)}$ and $\textbf{(A3)}$ that for $\gamma>0$
		\begin{align}
			\d\nonumber& \e^{-\gamma t}(|\widetilde{X}^{(k)}(t)|^2+|\widetilde{Y}^{(k)}(t)|^2)\\\nonumber
			&=\e^{-\gamma t}\Big\{-\gamma(|\widetilde{X}^{(k)}(t)|^2+|\widetilde{Y}^{(k)}(t)|^2)+2\langle \widetilde{X}^{(k)}(t), a\widetilde{X}^{(k)}(t)+b\widetilde{Y}^{(k)}(t)\rangle+2\langle \widetilde{Y}^{(k)}(t), b_1(\widetilde{X}^{(k)}_t, \widetilde{Y}^{(k)}_t,k)\rangle\\\nonumber
			&\quad+\|\sigma(\widetilde{X}^{(k)}(t), \widetilde{Y}^{(k)}(t), k)\|_{\rm HS}^2\Big\}\d t+2\e^{-\gamma t}\langle \widetilde{Y}^{(k)}(t), \sigma(\widetilde{X}^{(k)}(t), \widetilde{Y}^{(k)}(t),k)\d B(t)\rangle\\\nonumber
			&\leq \e^{-\gamma t}\Big\{-\gamma(|\widetilde{X}^{(k)}(t)|^2+|\widetilde{Y}^{(k)}(t)|^2)+2a|\widetilde{X}^{(k)}(t)|^2+|b|(|\widetilde{X}^{(k)}(t)|^2+|\widetilde{Y}^{(k)}(t)|^2)+{C_{k, \hat{\sigma}_k}}+|\widetilde{Y}^{(k)}(t)|^2\Big\}\d t\\\nonumber
			&\quad+2\e^{-\gamma t}\langle \widetilde{Y}^{(k)}(t), \sigma(\widetilde{X}^{(k)}(t), \widetilde{Y}^{(k)}(t),k)\d B(t)\rangle\\\nonumber
			&\leq \e^{-\gamma t}\left\{C_{k, \hat{\sigma}_k}-(\gamma-2a-|b|-1)(|\widetilde{X}^{(k)}(t)|^2+|\widetilde{Y}^{(k)}(t)|^2)\right\}\d t+2\e^{-\gamma t}\langle \widetilde{Y}^{(k)}(t), \sigma(\widetilde{X}^{(k)}(t), \widetilde{Y}^{(k)}(t),k)\d B(t)\rangle,\nonumber
		\end{align}
		where $C_{k, \hat{\sigma}_k}$ is a positive constant depending on $k$ and $\hat{\sigma}_k$. Set $\Gamma(t):=\varepsilon\e^{-\gamma t}(|\widetilde{X}^{(k)}(t)|^2+|\widetilde{Y}^{(k)}(t)|^2)$. So, by the above equation, It\^o's formula and $\textbf{(A3)}$, we deduce that
		\begin{equation*}\label{eq:chongzhishu}
			\begin{aligned}
				\d \e^{\Gamma(t)}&=\e^{\Gamma(t)}\left(\varepsilon\d(\e^{-\gamma t}(|\widetilde{X}^{(k)}(t)|^2+|\widetilde{Y}^{(k)}(t)|^2))\right.\\
				&\quad\left.+\frac{\varepsilon^2}{2}
				\d\langle\e^{-\gamma t}(|\widetilde{X}^{(k)}(t)|^2+|\widetilde{Y}^{(k)}(t)|^2),\e^{-\gamma t}(|\widetilde{X}^{(k)}(t)|^2+|\widetilde{Y}^{(k)}(t)|^2)\rangle\right)\\
				&\leq\e^{\Gamma(t)}\left\{\varepsilon \e^{-\gamma t}\left(C_{k, \hat{\sigma}_k}-(\gamma-2a-|b|-1)(|\widetilde{X}^{(k)}(t)|^2+|\widetilde{Y}^{(k)}(t)|^2)\right)\d t \right.\\
				&\left.\quad+2\varepsilon^2\e^{-2\gamma t}|\sigma(\widetilde{X}^{(k)}(t),\widetilde{Y}^{(k)}(t),k)^T\widetilde{Y}^{(k)}(t)|^2\d t+2\varepsilon\e^{-\gamma t}\langle \widetilde{Y}^{(k)}(t), \sigma(\widetilde{X}^{(k)}(t), \widetilde{Y}^{(k)}(t),k)\d B(t)\rangle\right\}\\
				&\leq -\varepsilon(\gamma-2a-|b|-1-2\varepsilon d\hat{\sigma}^2)\e^{-\gamma t}\e^{\Gamma(t)}\left(|\widetilde{X}^{(k)}(t)|^2+|\widetilde{Y}^{(k)}(t)|^2\right)\d t +C_{k, \hat{\sigma}_k}\varepsilon\e^{\Gamma(t)}\d t\\
				& \quad+2\varepsilon\e^{-\gamma t}\e^{\Gamma(t)}\langle \widetilde{Y}^{(k)}(t), \sigma(\widetilde{X}^{(k)}(t), \widetilde{Y}^{(k)}(t),k)\d B(t)\rangle, \quad \gamma>0, \ \varepsilon>0.
			\end{aligned}
		\end{equation*} 
		Thus, for any $\gamma>2a+|b|+1+2\varepsilon d\hat{\sigma}_k^2$, we have
		$
		\mbb E\e^{\Gamma(t)}\leq \e^{\varepsilon|\varphi(0)|^2}+C_{k, \hat{\sigma}_k}\varepsilon\int_{0}^{t}\mbb E \e^{\Gamma(s)}\d s.
		$
		Furthermore, by Gronwall's inequality, we deduce   
		$
		\mbb E\e^{\Gamma(t)}\leq \e^{\varepsilon|\varphi(0)|^2+C_{k, \hat{\sigma}_k}\varepsilon t},
		$
		which yields that 
		\begin{equation}\label{eq:Esup}
			\sup_{0\leq t \leq T}\mbb E\left(\exp\{\varepsilon\e^{-\gamma T}(|\widetilde{X}^{(k)}(t)|^2+|\widetilde{Y}^{(k)}(t)|^2)\}\right)\leq \e^{\varepsilon|\varphi(0)|^2+C_{k, \hat{\sigma}_k}\varepsilon T},
		\end{equation}
		where $\gamma>2a+|b|+1+2\varepsilon d\hat{\sigma}_k^2$. Note that 
		\begin{align*}
			\sup_{\varepsilon>0}(\varepsilon\e^{-(2a+|b|+1+2\varepsilon d\hat{\sigma}_k^2)T})=\lambda_T:=\frac{\e^{-(2a+|b|+1)T-1}}{2d\hat{\sigma}_k^2T}.
		\end{align*}
		Consequently,  by taking $\gamma\downarrow2a+|b|+1+\frac{1}{T}$ in \eqref{eq:Esup}, we arrive that
		\begin{equation}\label{eq:boundedfinite}
			\sup_{0\leq t \leq T}\mbb E\left(\e^{\lambda_0(|\widetilde{X}^{(k)}(t)|^2+|\widetilde{Y}^{(k)}(t)|^2)}\right)<\infty, \quad \lambda_0\in(0, \lambda_T).
		\end{equation}
		As a consequence, \eqref{eq:bounded} holds from \eqref{eq:boundedguji} and \eqref{eq:boundedfinite}.
		
		Let $\tau_R:=\inf\{t\geq0; |\widetilde{Y}^{(k)}(t)|=R\}$ where $ R\geq R_0:=|\varphi_2(0)|+1$. By $\textbf{(A3)}$, we have
		\begin{equation}\label{eq:aRt}
			a(R, t,\varphi):=\inf\left\{\underline{\lambda};|\psi_1(0)|\leq\left(|\varphi_1(0)|+|b|Rt\right)\e^{at}, |\psi_2(0)|\leq R\right\} >0,
		\end{equation}
		where $\underline{\lambda}$ is the lowest eigenvalue of $\sigma(\psi(0),k)$. Hence $\sigma(\widetilde{Z}^{(k)}(s), k)\geq a(R, T,\varphi)I$ for all $0\leq s\leq T\wedge\tau_R$. Note that for any $(\psi,k)\in\mcl C_r^{2d}\times\mbb S$ and $\varepsilon>0$, according to $\textbf{(A2)}$ and Young's inequality, we arrive at
		\begin{equation}\label{eq:sigmab2}
			\begin{aligned}
				|\sigma&^{-1}(\psi(0),k)b_2(\psi(0),k)|^2\\
				&\leq \|\sigma^{-1}(\psi(0),k)\|_{\rm HS}^{2}|b_2(\psi(0),k)|^2\\
				&\leq\|\sigma^{-1}(\psi(0),k)\|_{\rm HS}^{2}(|b_2(0, k)|+L_2|\psi(0)|^{\alpha})^2\\
				&\leq \|\sigma^{-1}(\psi(0),k)\|_{\rm HS}^{2}\left((1+\frac{1}{\varepsilon})|b_2(0, k)|^2+(1+\varepsilon)L_2^2(|\psi_1(0)|^{2\alpha}+|\psi_2(0)|^{2\alpha})\right)\\
				&\leq \|\sigma^{-1}(\psi(0),k)\|_{\rm HS}^{2}\left(C_{\varepsilon, \alpha, L_2}+\varepsilon(|\psi_1(0)|^2+|\psi_2(0)|^2)\right),   	
			\end{aligned}
		\end{equation}
		where $C_{\varepsilon, \alpha, L_2}$ is a positive constant. This yields that
		\begin{equation}\label{eq:guji}
			\begin{aligned}
				\mbb E&\exp{\left\{\lambda\int_{0}^{T\wedge\tau_R}|\sigma^{-1}(\widetilde{Z}^{(k)}(t),k)b_2(\widetilde{Z}^{(k)}(t), k)|^2\d t\right\}}\\
				&\leq \mbb E\exp{\left\{\frac{\lambda d}{a^2(R, T,\varphi)}\int_{0}^{T}\left(C_{\varepsilon, \alpha, L_2}+\varepsilon(|\widetilde{X}^{(k)}(t)|^2+|\widetilde{Y}^{(k)}(t)|^2)\right)\d t\right\}}.
			\end{aligned}
		\end{equation}
		Combining \eqref{eq:bounded} with \eqref{eq:guji} enables us to obtain that
		\begin{align*}
			\mbb E \exp{\left\{\lambda\int_{0}^{T\wedge\tau_R}|\sigma^{-1}(\widetilde{Z}^{(k)}(t),k)b_2(\widetilde{Z}^{(k)}(t), k)|^2\d t\right\}}<\infty,\  \text{for all $\lambda, T>0$.}
		\end{align*}
		
		
		In what follows, it remains to verify that $\tau_R\rightarrow\infty$ a.s. as $R\rightarrow\infty$. We shall show that for each $t>0$ fixed,
		\begin{equation}\label{eq:tauR}
			\lim\limits_{R\rightarrow+\infty}\mbb P(\tau_R\leq t)=0.
		\end{equation}
		To achieve this goal, let us consider a natural teat function: $H(x, y):=|x|^2+|y|^2$ for $x$, $y\in\mbb R^d$. Then, by $\textbf{(A1)}$, we can derive that for any $\psi\in\mcl C_r^{2d}$,
		\begin{align*}
			\widetilde{\mcl L}^{(k)}(\psi) H(\psi(0))&=2\langle\psi_1(0), a\psi_1(0)+b\psi_2(0)\rangle+2\langle b_1(\psi,k), \psi_2(0)\rangle+\text{tr}(\sigma(\psi(0),k)\sigma(\psi(0),k)^T)\\
			&\leq 2a|\psi_1(0)|^2+|b|(|\psi_1(0)|^2+|\psi_2(0)|^2)+|\psi_2(0)|^2+{C_{k, \sigma_k}}\\      	
			&\leq C_{k, \sigma_k}+(2a+|b|+1)|\psi(0)|^2.
		\end{align*}
		Now fix any $\kappa\geq 2a+|b|+1$. By It\^o's formula, for any $t\geq0$,
		\begin{equation*}\label{eq:operatortauR}
			\begin{aligned}
				\mbb E(\e^{-\kappa(t\wedge\tau_R)}H(\widetilde{Z}^{(k)}(t\wedge\tau_R))
				&=H(\varphi(0))+\mbb E\int_{0}^{t\wedge\tau_R}\e^{-\kappa s}\left(-\kappa H(\widetilde{Z}^{(k)}(s))+\widetilde{\mcl L}^{(k)}(\widetilde{Z}^{(k)}_s) H(\widetilde{Z}^{(k)}(s))\right)\d s\\
				&\leq H(\varphi(0))+\mbb E\int_{0}^{t\wedge\tau_R}\e^{-\kappa s}\left((2a+|b|+1-\kappa)|\widetilde{Z}^{(k)}(s)|^2+C_{k, \sigma_k}\right)\d s\\
				&\leq H(\varphi(0))+\frac{C_{k, \sigma_k}}{\kappa}.
			\end{aligned}
		\end{equation*} 
		which implies that 
		\begin{align*}
			R^2\mbb P(\tau_R\leq t)&\leq \mbb E\left[\mbf 1_{\{\tau_R\leq t\}}H(\widetilde{Z}^{(k)}(t\wedge\tau_R))\right]\leq \e^{\kappa t}\mbb E\left[ \e^{-\kappa(t\wedge\tau_R)}H(\widetilde{Z}^{(k)}(t\wedge\tau_R))\right]\leq \e^{\kappa t}\left(H(\varphi(0))+\frac{C_{k, \sigma_k}}{\kappa}\right).
		\end{align*} 
		This means that \eqref{eq:tauR} holds. We therefore complete the proof.      
	\end{proof}

Now we state the main result of this section.
\begin{theorem}\label{thm:weaksolution}
	Suppose that $\textbf{(A1)-(A3)}$ hold. Then,  there exists a unique  martingale solution  $\mathbb{P}_{k}^{(\varphi)}$ for the operator $\mcl L^{(k)}$ starting from $\varphi$, where $\mcl L^{(k)}$ is defined as follows: 
	\begin{align*}
		\begin{aligned}
			\mathcal{L}^{(k)}(\psi) f(\psi(0)):=&\frac{1}{2} \operatorname{tr}\left(a(\psi(0), k) \nabla_{y}^{2} f(\psi(0))\right)+\left\langle a\psi_1(0)+b\psi_2(0), \nabla_{x} f(\psi(0))\right\rangle \\
			&+\left\langle b_1(\psi, k)+b_2(\psi(0), k), \nabla_{y} f(\psi(0))\right\rangle,
		\end{aligned}
	\end{align*}
	for each $f\in\mcl C_c^{2}(\mbb R^d)$ and $\psi\in \mcl C_r^{2d}$.
\end{theorem}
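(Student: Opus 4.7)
The plan is to realize $\mathbb{P}_{k}^{(\varphi)}$ as a Girsanov perturbation of the reference measure $\widetilde{\mathbb{P}}^{(k)}$ (the law of the unique weak solution of \eqref{eq:fuzhu}) and then to derive uniqueness by inverting the transformation. On the probability space carrying $\widetilde{Z}^{(k)}$ and its driving Brownian motion $B$, I would define
\[
\rho(t):=\sigma^{-1}(\widetilde{Z}^{(k)}(t),k)b_2(\widetilde{Z}^{(k)}(t),k),\quad R(t):=\exp\Bigl\{\int_0^t\langle\rho(s),\d B(s)\rangle-\frac{1}{2}\int_0^t|\rho(s)|^2\d s\Bigr\}.
\]
Lemma \ref{lem:Novikov} (applied with $\lambda=1/2$) supplies Novikov's criterion on every $[0,T]$, so $\{R(t)\}_{t\geq 0}$ is a genuine $\widetilde{\mathbb{P}}^{(k)}$-martingale. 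Declaring $\d\mathbb{P}_k^{(\varphi)}/\d\widetilde{\mathbb{P}}^{(k)}|_{\mathcal{F}_T}=R(T)$ (a consistent family in $T$ by the martingale property) and invoking Girsanov's theorem, the process $\widetilde{B}(t):=B(t)-\int_0^t\rho(s)\d s$ is a $\mathbb{P}_k^{(\varphi)}$-Brownian motion under which $\widetilde{Z}^{(k)}$ satisfies \eqref{eq:zaikan}. Pushing this forward to the canonical path space and applying It\^o's formula to $f(\widetilde{Z}^{(k)}(\cdot))$ for $f\in C_c^2(\mbb R^{2d})$ shows that the resulting law is a martingale solution for $\mathcal{L}^{(k)}$ starting from $\varphi$.

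For uniqueness, let $\mathbb{P}'$ be any martingale solution for $\mathcal{L}^{(k)}$ starting from $\varphi$. By the non-degeneracy part of \textbf{(A3)} and the standard equivalence between non-degenerate martingale problems and weak solutions, the coordinate process $Z$ admits, possibly on an enlargement, a driving Brownian motion $W$ such that \eqref{eq:zaikan} holds under $\mathbb{P}'$. Setting
\[
\rho'(t):=\sigma^{-1}(Z(t),k)b_2(Z(t),k),\quad R'(t):=\exp\Bigl\{-\int_0^t\langle\rho'(s),\d W(s)\rangle-\frac{1}{2}\int_0^t|\rho'(s)|^2\d s\Bigr\},
\]
the exponential $R'$ is a $\mathbb{P}'$-martingale (see the obstacle paragraph below). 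Then $\d\mathbb{Q}'/\d\mathbb{P}'|_{\mathcal{F}_T}=R'(T)$ defines a probability measure under which $Z$ is a weak solution of the reference equation \eqref{eq:fuzhu}. Weak uniqueness for \eqref{eq:fuzhu} forces $\mathbb{Q}'=\widetilde{\mathbb{P}}^{(k)}$, and inverting the Radon-Nikodym derivative $R'$ yields $\mathbb{P}'=\mathbb{P}_k^{(\varphi)}$.

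The main obstacle is verifying the reverse Novikov bound under $\mathbb{P}'$, since Lemma \ref{lem:Novikov} is established only under $\widetilde{\mathbb{P}}^{(k)}$. My approach is to rerun the Lyapunov argument that led to the exponential moment bound \eqref{eq:boundedfinite} for $Z$ under $\mathbb{P}'$: in the It\^o expansion of $\e^{\Gamma(t)}$ the only additional contribution coming from the new drift is $2\varepsilon\e^{-\gamma t}\e^{\Gamma(t)}\langle Y(t),b_2(Z(t),k)\rangle$, and by \textbf{(A2)} together with Young's inequality this term is bounded by $\varepsilon(|X(t)|^2+|Y(t)|^2)+C_{\varepsilon,\alpha,L_2}$, exactly the form that is absorbed by the dissipation and constant already present in \eqref{eq:boundedfinite}. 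Once this moment bound is in place, the chain \eqref{eq:sigmab2}-\eqref{eq:guji} combined with the stopping-time argument for $\tau_R$ produces the required integrability of $R'$ identically, and the remainder of the uniqueness proof reduces to a textbook Girsanov-plus-weak-uniqueness routine.
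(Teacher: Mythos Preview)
Your proposal is correct and follows the same Girsanov route as the paper: build the weak solution of \eqref{eq:zaikan} as an absolutely continuous perturbation of the reference law $\widetilde{\mathbb P}^{(k)}$ using Lemma~\ref{lem:Novikov} for Novikov's condition, and obtain uniqueness by the inverse change of measure back to \eqref{eq:fuzhu}. The only cosmetic differences are that the paper partitions $[0,\infty)$ into subintervals $[T_{n-1},T_n)$ and patches the pieces via Tulcea's extension theorem (a device that is not actually needed, since Lemma~\ref{lem:Novikov} already covers every finite horizon), and that for uniqueness it defers the reverse-Girsanov computation to \cite[Theorem~2.1(2)]{wang2018} rather than redoing the Lyapunov/Novikov estimate under $\mathbb P'$ as you sketch.
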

\begin{proof}
	 First, we show the existence of the weak solution to \eqref{eq:zaikan}. For each initial value ${Z}^{(k)}_0=\varphi=:\varphi^1$, set
	\[
	\begin{aligned}
		R^{(k),1}(t):=\exp{\left\{\int_{0}^{t}\langle \sigma^{-1}(\widetilde{Z}^{(k)}(s),k)b_2(\widetilde{Z}^{(k)}(s),k), \d B(s)\rangle-\frac{1}{2}\int_{0}^{t}|\sigma^{-1}(\widetilde{Z}^{(k)}(s),k)b_2(\widetilde{Z}^{(k)}(s),k)|^2\d s\right\}},
	\end{aligned}
	\]
	for any $ t\in [0,T_1],$ and $\d \mathbb{P}_{k,T_1}^{(\varphi^1)}:=R^{(k),1}(T_1)\d \widetilde{\mbb P}^{(k)}$, where $T_1:=S>0$ is arbitrary with $\alpha\in(0,1)$. Moreover, let
	\begin{align*}
		\widetilde{B}^{(k),1}(t)=B(t)-\int_{0}^{t}\sigma^{-1}(\widetilde{Z}^{(k)}(s),k)b_2(\widetilde{Z}^{(k)}(s),k)\d s, \quad t\in[0,T_1].
	\end{align*}
	According to Lemma \ref{lem:Novikov}, we infer that
	\begin{align*}
		\mbb E_{\widetilde{\mbb P}^{(k)}}\exp{\left\{\frac{1}{2}\int_{0}^{T_1}|\sigma^{-1}(\widetilde{Z}^{(k)}(t)b_2(\widetilde{Z}^{(k)}(t)|^2\d t\right\}}<\infty,
	\end{align*}
	that is, the Novikov condition holds true. Thus the Girsanov theorem implies $(\widetilde{B}^{(k),1}(t))_{t\in[0, T_1]}$ is a Brownian motion under the probability measure $\mathbb{P}_{k,T_1}^{(\varphi^1)}$. Note that \eqref{eq:fuzhu} can be reformulated as
	\begin{align*}
		\left\{
		\begin{aligned}
			\text{d}\widetilde{X}^{(k)}(t)&=\left(a\widetilde{X}^{(k)}(t)+b\widetilde{Y}^{(k)}(t)\right)\text{d}t,\\
			\text{d}\widetilde{Y}^{(k)}(t)&=\left[ b_1(\widetilde{X}^{(k)}_t,\widetilde{Y}^{(k)}_t,k)+b_2(\widetilde{X}^{(k)}(t),\widetilde{Y}^{(k)}(t),k)\right]\text{d}t+\sigma(\widetilde{X}^{(k)}(t),\widetilde{Y}^{(k)}(t),k)\text{d}\widetilde{B}^{(k),1}(t),
		\end{aligned}
		\right.
	\end{align*}
	for $t\in[0, T_1)$ and $\widetilde{Z}^{(k)}_0=\varphi^1$.  Thus $(\widetilde{Z}^{(k),1}(t), \widetilde{B}^{(k),1}(t))$ is a weak solution to \eqref{eq:zaikan} on $[0, T_1)$ under the probability space $(\Omega, \mcl F, \mathbb{P}_{k,T_1}^{(\varphi^1)})$, where $\widetilde{Z}^{(k),1}(t):=\widetilde{Z}^{(k)}(t)$ for any $t\in[0,T_1).$ 
	
	Analogously, set $T_2:=2S$ and for $t\in[0,S]$, set $\widehat{Z}^{(k)}(t):=\widetilde{Z}^{(k)}(t+T_1)$. Moreover, for  initial value $\widehat{Z}^{(k)}_0=\widetilde{Z}^{(k),1}_{T_1}=:\varphi^2,$  set
	\[
	\begin{aligned}
		R^{(k),2}(t):=\exp{\left\{\int_{0}^{t}\langle \sigma^{-1}(\widehat{Z}^{(k)}(s),k)b_2(\widehat{Z}^{(k)}(s),k), \d B(s)\rangle-\frac{1}{2}\int_{0}^{t}|\sigma^{-1}(\widehat{Z}^{(k)}(s),k)b_2(\widehat{Z}^{(k)}(s),k)|^2\d s\right\}},
	\end{aligned}
	\]
	for any $ t\in [0,S],$ and $\d \mathbb{P}_{k,T_2}^{(\varphi^2)}:=R^{(k),2}(S)\d \widetilde{\mbb P}^{(k)}$.  Moreover, let
	\begin{align*}
		\widehat{B}^{(k)}(t)=B(t)-\int_{0}^{t}\sigma^{-1}(\widehat{Z}^{(k)}(s),k)b_2(\widehat{Z}^{(k)}(s),k)\d s, \quad t\in[0,S].
	\end{align*}
	According to Lemma \ref{lem:Novikov}, we infer that
	\begin{align*}
		\mbb E_{\widetilde{\mbb P}^{(k)}}\exp{\left\{\frac{1}{2}\int_{0}^{S}|\sigma^{-1}(\widehat{Z}^{(k)}(t)b_2(\widehat{Z}^{(k)}(t)|^2\d t\right\}}<\infty.
	\end{align*}
	Thus the Girsanov theorem implies $(\widehat{B}^{(k}(t))_{t\in[0, S]}$ is a Brownian motion under the probability measure $\mathbb{P}_{k,T_2}^{(\varphi^2)}$. We can reformulate  \eqref{eq:fuzhu} as
	\begin{align*}
		\left\{
		\begin{aligned}
			\text{d}\widehat{X}^{(k)}(t)&=\left(a\widehat{X}^{(k)}(t)+b\widehat{Y}^{(k)}(t)\right)\text{d}t,\\
			\text{d}\widehat{Y}^{(k)}(t)&=\left[ b_1(\widehat{X}^{(k)}_t,\widehat{Y}^{(k)}_t,k)+b_2(\widehat{X}^{(k)}(t),\widehat{Y}^{(k)}(t),k)\right]\text{d}t+\sigma(\widehat{X}^{(k)}(t),\widehat{Y}^{(k)}(t),k)\text{d}\widehat{B}^{(k)}(t),
		\end{aligned}
		\right.
	\end{align*}
	for $t\in[0, S)$ .  For $t\in[T_1,T_2)$, set
	\[
	(\widetilde{Z}^{(k),2}(t), \widetilde{B}^{(k),2}(t))=(\widehat{Z}^{(k),1}(t-T_1), \widetilde{B}^{(k),1}(T_1)+\widehat{B}^{(k)}(t-T_1)),
	\]
	which is a weak solution to \eqref{eq:zaikan} on $[T_1, T_2)$ under the probability space $(\Omega, \mcl F, \mathbb{P}_{k,T_2}^{(\varphi^2)})$. 
	Similarly,  we can show inductively that \eqref{eq:zaikan} admits a weak solution $(\widetilde{Z}^{(k),n}(t), \widetilde{B}^{(k),n}(t))$ on $[T_{n-1}, T_n)$ under the probability space $(\Omega, \mcl F, \mathbb{P}_{k, {T_n}}^{(\varphi^n)})$, $n\geq 3$, where  $\varphi^n=\widetilde{Z}^{(k),n-1}_{T_{n-1}}$ and $T_n:=nS$.
	
	Thus, we can define 
	\[
	(Z^{(k)}(t), B^{(k)}(t))=(\widetilde{Z}^{(k),n}(t), \widetilde{B}^{(k),n}(t)) \text{ for } t\in [T_{n-1}, T_n).
	\]
	For any initial value $\varphi\in\mcl C_r^{2d}$, we define a series of probability measures on $(\Omega, \mcl F)$ as follows:
	$$
	\mbb P^{(1)}=\mathbb{P}_{k,T_1}^{(\varphi^1)}, \quad {\rm and \  } \quad \mbb P^{(n+1)}=\mbb P^{(n)}\otimes_{T_{n}}\mathbb{P}_{k,{T_{n+1}}}^{(\varphi^{n+1})}, \quad \text{for $n\geq1$}.
	$$
	Note that $\lim\limits_{n\rightarrow 0}\mbb P^{(n)}(T_{n}\leq t)=0$ for all $t\geq0$. Hence by Tulcea's extension theorem (see, e.g., \cite[Theorem 1.3.5]{1997Multidimensional}), there exists a unique $\mathbb{P}_{k}^{(\varphi)}$ on $(\Omega, \mcl F)$ such that $\mathbb{P}_{k}^{(\varphi)}$ equals $\mbb P^{(n)}$ on $\mcl F_{T_n}:=\sigma(B^{(k)}(t):t\leq T_n)$.  Moreover, $B^{(k)}(t)$  is a Brownian motion under the probability measure $\mathbb{P}_{k}^{(\varphi)}$. Hence, \eqref{eq:zaikan} admits a global weak solution $(Z^{(k)}(t), B^{(k)}(t))$ under the probability space $(\Omega, \mcl F, \mathbb{P}_{k}^{(\varphi)})$.
	
	Next we proceed to show that the  probability measure $\mathbb{P}_{k}^{(\varphi)}$ is a martingale solution for   the operator $\mcl L^{(k)}$ starting from $\varphi$.   For each $f\in\mcl C_c^{2}(\mbb R^{2d})$, by It\^o's formula, we have
	\begin{align*}
		\begin{aligned}
			f(Z^{(k)}(t))=&f(\varphi(0))+\int_{0}^{t}\mcl L^{(k)}(Z^{(k)}_s)f(Z^{(k)}(s))\d s
			+\int_{0}^{t}\langle\nabla_{y} f(Z^{(k)}(s)),\sigma(Z^{(k)}(s),k)\d B^{(k)}(s)\rangle.
		\end{aligned}
	\end{align*}
	By $\textbf{(A3)}$,
	\begin{align*}
		\begin{aligned}
			M_t^{(k)(f)}&:=f(Z^{(k)}(t))-f(\varphi(0))-\int_{0}^{t}\mcl L^{(k)}(Z^{(k)}_s)f(Z^{(k)}(s))\d s
			=\int_{0}^{t}\langle\nabla_{y} f(Z^{(k)}(s)),\sigma(Z^{(k)}(s),k)\d B^{(k)}(s)\rangle
		\end{aligned}
	\end{align*}
	is an $\{\mcl F_t\}$-martingale with respect to $\mathbb{P}_{k}^{(\varphi)}$. 
	
	Finally, we justify the uniqueness of the martingale solution for the operator $\mcl L^{(k)}$. In the sequel, it is sufficient to show the weak uniqueness on the time interval $[0, T_1]$ since it can be done analogously on $[T_1, T_2]$, $[T_2, T_3]$, $\cdots$. Let $\mathbb{P}_{k}^{(\varphi),i}$, $i=1,2$,  be two martingale solutions for the operator $\mcl L^{(k)}$  starting from $\varphi$.  We intend to prove $\mathbb{P}_{k}^{(\varphi),1}=\mathbb{P}_{k}^{(\varphi),2}$.  In terms of \cite[Proposition 2.1, p169, \& Corollary, p206]{ikeda1989}, it remains to show that for any $f\in C_b(C([0, T_1]; \mbb R^{2d})\times C([0, T_1]; \mbb R^d); \mbb R)$,
	\begin{equation}\label{eq:unique}
		\mbb E_{\mathbb{P}_{k}^{(\varphi),1}}f(Z^{(k),1}(t), B^{(k)}_1(t))=\mbb E_{\mathbb{P}_{k}^{(\varphi),2}}f(Z^{(k),2}(t), B^{(k)}_2(t)),
	\end{equation}
	where $\mbb E_{\mathbb{P}_{k}^{(\varphi),i}}$ means the expectation w.r.t. $\mathbb{P}_{k}^{(\varphi),i}$. Whereas \eqref{eq:unique} can be done exactly by following the argument of \cite[Theorem 2.1 (2)]{wang2018}.  The proof is complete.
\end{proof}

\section{Martingale solution: special Markovian switching case}\label{sec:markov}
Before considering our desired results, we first construct a martingale solution to the operator $\widehat{\mcl A}$ defined in \eqref{eq:operatorhat}. To proceed, let us write $\omega=\left(\omega_1, \omega_2\right) \in \Omega:=\Omega_1 \times \Omega_2$ with $\Omega_1:=C([0, \infty), \mcl C_{r}^{2 d})$ and $\Omega_2:=D([0, \infty), \mathbb{S})$. We denote by $\mathcal{G}_t$ the $\sigma$-field generated by the cylindrical sets on $\Omega_1$ up to time $t$ and $\mathcal{N}_t$ the $\sigma$-field generated by the cylindrical sets on $D([0, \infty), \mathbb{S})$ up to time $t$. Put $\mathcal{G}:=\bigvee_{t=0}^{\infty} \mathcal{G}_t$ and $\mathcal{N}:=\bigvee_{t=0}^{\infty} \mathcal{N}_t$. We have $\mathcal{F}_t=\mathcal{G}_t \otimes \mathcal{N}_t$ for each $t \geq 0$ and $\mathcal{F}=\mathcal{G} \otimes \mathcal{N}$. 

Consider a special $Q$-matrix $\widehat{Q}=(\hat{q}_{kl})$ given by
\begin{equation}\label{eq:qhat}
	\hat{q}_{kl}:= \sup_{\varphi \in \mcl C_r^{2d}}q_{kl}(\varphi) \quad {\rm for} \quad l\neq k, \quad {\rm and} \quad \hat{q}_{kk}:=-\sum_{l\neq k}\hat{q}_{kl} \quad {\rm for} \quad k\in \mbb S.    
\end{equation}
As usual, denote by $\mathcal{B}_b(\mathbb{S})$ the Banach space of all bounded measurable functions on $\mathbb{S}$ equipped with the supremum norm. Corresponding to the $Q$-matrix $\widehat{Q}$, we introduce an operator $\widehat{Q}$ on $\mathcal{B}_b(\mathbb{S})$ as follows: for any $f \in \mathcal{B}_b(\mathbb{S})$,
\begin{equation}\label{eq:operatorspec}
	\widehat{Q} f(k)=\sum_{l \in \mathbb{S}} \hat{q}_{k l}(f(l)-f(k)) .
\end{equation}
For a given $k \in \mathbb{S}$, a probability measure $\mathbb{Q}^{(k)}$ on $D([0, \infty), \mathbb{S})$ is said to be a solution to the martingale problem for the operator $\widehat{Q}$ starting from $k$, if $\mathbb{Q}^{(k)}(\Lambda(0)=k)=1$ and for each function $f \in \mathcal{B}_b(\mathbb{S})$,
\begin{equation}
	N_t^{(f)}:=f(\Lambda(t))-f(\Lambda(0))-\int_0^t \widehat{Q} f(\Lambda(s)) \mathrm{d}s
\end{equation}
is an $\left\{\mathcal{N}_t\right\}$-martingale with respect to $\mathbb{Q}^{(k)}$. Here $\Lambda(t)$ is the coordinate process $\Lambda(t, \omega):=\omega(t)$ with $\omega \in D([0, \infty), \mathbb{S})$ and $t \geq 0$.

Now we introduce an operator $\widehat{\mathcal{A}}$ on $C_c^2(\mathbb R^{2d} \times \mathbb{S})$ as follows: for any $f\in C_{c}^{2}(\mbb{R}^{2 d} \times \mathbb{S})$ ,
\begin{equation}\label{eq:operatorhat}
	(\widehat{\mathcal{A}} f)(\varphi, k):=\mathbb{L}_{k}(\varphi)f(\varphi(0), k)+\widehat{Q} f(\varphi(0), k),
\end{equation}
where the operators $\mathbb{L}_k(\varphi)$ and $\widehat{Q}$ are defined in \eqref{eq:operatork} and \eqref{eq:operatorspec}, respectively. Note that $\widehat{\mathcal{A}}$ of \eqref{eq:operatorhat} is really a special case of the operator $\mathcal{A}$ defined in \eqref{eq:operator}. Similar to the notion of martingale solution for the operator $\mathcal{A}$ given in Definition \ref{defn1}, we give the definition of martingale solution for the operator $\widehat{\mathcal{A}}$.
\begin{definition}
	A probability measure $\widehat{\mathbb{P}}^{(\varphi, k)}$ on $C([0, \infty), \mcl C_{r}^{2d}) \times D([0, \infty)$, $\mathbb{S})$ is a solution to the martingale problem for the operator $\widehat{\mathcal{A}}$ starting from $(\varphi, k) \in \mcl C_{r}^{2 d} \times \mathbb{S}$, if $\widehat{\mathbb{P}}^{(\varphi, k)}\{(W_0, \Lambda(0))=(\varphi, k)\}=1$ and for each $f \in C_c^{2}(\mbb{R}^{2 d} \times \mathbb{S})$,
	\begin{equation}\label{eq:Mfhat}
		\widehat{M}_t^{(f)}(W_{\cdot}, \Lambda(\cdot)):=f(W(t), \Lambda(t))-f(W(0), \Lambda(0))-\int_0^t (\widehat{\mathcal{A}} f)(W_s, \Lambda(s)) \mathrm{d} s
	\end{equation}
	is a martingale with respect to the filtration $\left\{\mathcal{F}_t\right\}$ under $\widehat{\mathbb{P}}^{(\varphi, k)}$. Here, $(W_t, \Lambda(t))$ is the coordinate process on $C([0, \infty), \mcl C_{r}^{2 d}) \times D([0, \infty), \mathbb{S})$.
\end{definition}

We will show that for each $(\varphi, k) \in \mcl C_{r}^{2 d} \times \mathbb{S}$, there exists a unique martingale solution $\widehat{\mathbb{P}}^{(\varphi, k)}$ for the operator $\widehat{\mathcal{A}}$ starting from $(\varphi, k)$. From \cite{1986Zheng}, we have that for any given $k \in \mathbb{S}$, there exists a unique martingale solution $\mathbb{Q}^{(k)}$ on $D([0, \infty), \mathbb{S})$ for the operator $\widehat{Q}$ starting from $k$. Based on the martingale solutions $\left\{\mathbb{P}_k^{(\varphi)}: \varphi\in \mcl C_{r}^{2 d}, k \in \mathbb{S}\right\}$, $\left\{\mathbb{Q}^{(k)}: k \in \mathbb{S}\right\}$ and the stopping times $\left\{\tau_n\right\}$ defined in \eqref{eq:stopping}, we can construct the desired probability measure $\widehat{\mathbb{P}}^{(\varphi,k)}$ on $C([0, \infty), \mcl C_{r}^{2 d}) \times D([0, \infty), \mathbb{S})$. In order to accomplish the construction, the special matrix $\widehat{Q}$ being independent of $\varphi$ is very crucial; see Lemma \ref{lem:stopmart} and its proof below.

Let $(W_t, \Lambda(t))(\omega):=\left(\omega_1(t), \omega_2(t)\right)$ be the coordinate process on $\Omega$ and let $\left\{\tau_n\right\}$ be the sequence of stopping times defined by
\begin{equation}\label{eq:stopping}
	\tau_0\left(\omega_2\right) \equiv 0, \quad \tau_n\left(\omega_2\right):=\inf \left\{t>\tau_{n-1}\left(\omega_2\right): \Lambda\left(t, \omega_2\right) \neq \Lambda\left(\tau_{n-1}\left(\omega_2\right), \omega_2\right)\right\}\text { for any } n \geq 1. 
\end{equation}
According to $\textbf{(A4)}$, we have $\sup_{k\in\mbb S}\hat q_{k}\leq H<\infty$. Then it follows from \cite[Theorem 2.7.1]{1998Norris} that for any $k\in\mbb S$,
\begin{equation}\label{eq:q1}
	\mbb Q^{(k)}\left\{\lim\limits_{n\rightarrow\infty}\tau_n=+\infty\right\}=1.
\end{equation}

Next, we construct a Poisson random measure to rewrite the operator $\widehat{Q}$ as in \cite[Chapter II.9]{ikeda1989}. Precisely, for each $i \in \mbb S$, let
\[
\Delta_{i 1}=\left[0, \hat{q}_{i 1}\right), \Delta_{i j}=\left[\sum_{l=1,l\neq i}^{j-1}\hat{q}_{il},\sum_{j=1,j\neq i}^{j}\hat{q}_{ij}\right),j>1,j\neq i,
\]
and
\[
U_{i}=\bigcup_{j \geq 1, j \neq i} \Delta_{ij}, \quad i \geq 1.
\]
For notation convenience, we put \( \Delta_{i i}=\emptyset \) and \( \Delta_{i j}=\emptyset \) if \( \hat{q}_{i j}=0, i, j \in \mbb {S} \). Note that for each $i \in \mbb S$, $\{\Delta_{ij}:j\in \mbb S \}$ are disjoint intervals, and the length of the interval $\Delta_{ij}$ is equal to $\hat{q}_{ij}$. Moreover,  it follows from $\textbf{(A4)}$ that  $\mathfrak{m}\left(U_{i}\right)$ is bounded above by $H$  for any $i\in\S$,  where \( \mathfrak{m}(\mathrm{d} x) \) denotes the Lebesgue measure over \( \mathbb{R} \).

Let \( \xi_{n}^{(i)}, i, n=1,2, \ldots \), be \( U_{i} \)-valued random variables with
$
\mbb Q^{(k)}\left(\xi_{n}^{(i)} \in \mathrm{d} x\right)=\mathfrak{m}(\d x)/\mathfrak{m}\left(U_{i}\right),
$
and \( \tau_{n}^{(i)}, i, n \geq 1 \), be non-negative random variables satisfying \( \mbb Q^{(k)}\left(\tau_{n}^{(i)}>t\right)=\exp\{-\mathfrak{m}\left(U_{i}\right)t\} \), \( t \geq 0 \). Suppose that \( \left\{\xi_{n}^{(i)}, \tau_{n}^{(i)}\right\}_{i, n \geq 1} \) are all mutually independent. Put
\[
\zeta_{0}^{(i)}=0,\quad i \geq 1 \text {, and } \zeta_{n}^{(i)}=\tau_{1}^{(i)}+\cdots+\tau_{n}^{(i)} \quad\text { for }  i,n  \geq 1.
\]
Let
\[
D_{p}=\bigcup_{i \geq 1} \bigcup_{n \geq 0}\left\{\zeta_{n}^{(i)}\right\}\text{ and }p(\zeta_{n}^{(i)})=\xi_{n}^{(i)}\quad i,n  \geq 1.
\]
Correspondingly, put
\begin{equation}\label{eq:bujini}
	N([0, t] \times A)=\#\left\{ s \in D_{p}:0< s \leq t, p(s) \in A\right\}, t>0, A \in \mathscr{B}([0, \infty)) .
\end{equation}
As a consequence, we get a Poisson point process \( (p(t)) \) and a Poisson random measure \( {N}(\mathrm{d} t, \mathrm{d} u) \) with intensity \( \mathfrak{m}(\mathrm{d} u)\mathrm{d} t  \). Moreover, we know that \( \mathfrak{m}(\d u) \d t\) is the compensator of Poisson random measure $N(\d t, \d u)$; namely,
\begin{equation}\label{eq:nmart}
	\widetilde{N}(\d t,\d u):=N(\d t, \d u)-\mathfrak{m}(\d u)\d t
\end{equation}
is a martingale measure with respect to $\mathbb{Q}^{(k)}$. 

Define a function $h: \mathbb{S} \times\left[0, H\right]$ by
\[
h(i, u)=\sum_{j \in \mbb S}(j-i) \mathbf{1}_{\triangle_{i j}}(u).
\]
Thus, the operator $\widehat{Q}$ defined in \eqref{eq:operatorspec} can be represented as
\begin{equation}\label{eq:operatorqspec}
	\widehat{Q} f(i)=\sum_{j \in \mathbb{S}} \hat{q}_{i j}(f(j)-f(i))=\int_{[0,H]}(f(i+h(i,u))-f(i)) \mathfrak{m}(\d u) .
\end{equation}


\begin{lemma}\label{lem:stopmart}
	Under Assumptions $\textbf{(A1)-(A3)}$, for any $n \geq 1$, there exists a probability measure $\mathbb{P}_{\Lambda\left(\tau_n\right)}^{(W_{\tau_n})}$ on $\left(\Omega_1, \mathcal{G}\right)$ such that for any $f \in C_c^2(\mbb{R}^{2 d})$,
	$$
	f(W(t))-f\left(W\left(\tau_n\right)\right)-\int_{\tau_n}^t \mathbb{L}_{\Lambda\left(\tau_n\right)}(W_s) f(W(s)) \mathrm{d} s, \quad t \geq \tau_n
	$$
	is a martingale under $\mathbb{P}_{\Lambda\left(\tau_n\right)}^{(W_{\tau_n})}$.
\end{lemma}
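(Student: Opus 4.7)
The plan is to invoke Theorem \ref{thm:weaksolution} with the random initial data $(W_{\tau_n}, \Lambda(\tau_n))$ and then time-shift. The key observation is that, by definition \eqref{eq:stopping}, the switching process $\Lambda$ is constant equal to $\Lambda(\tau_n)$ on $[\tau_n, \tau_{n+1})$; hence, on this interval the operator $\mbb L_{\Lambda(\tau_n)}(W_s)$ acting on a spatial function $f \in C_c^2(\mbb R^{2d})$ coincides with the fixed-switching operator $\mcl L^{(k)}$ of Theorem \ref{thm:weaksolution} with $k = \Lambda(\tau_n)$. Thus the lemma reduces to pasting the fixed-switching martingale solutions at the random starting time $\tau_n$.

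Concretely, for each deterministic pair $(\varphi, k) \in \mcl C_r^{2d} \times \mbb S$, Theorem \ref{thm:weaksolution} supplies a unique martingale solution $\mbb P_k^{(\varphi)}$ on $(\Omega_1, \mcl G)$ starting from $\varphi$, under which
\[
f(W(u)) - f(\varphi(0)) - \int_0^u \mcl L^{(k)}(W_s) f(W(s))\, \d s
\]
is a $\{\mcl G_u\}$-martingale for any $f \in C_c^2(\mbb R^{2d})$. I would define $\mbb P_{\Lambda(\tau_n)}^{(W_{\tau_n})}$ by specializing $(\varphi, k)$ to the realized values $(W_{\tau_n}, \Lambda(\tau_n))$ and then pushing the resulting measure forward by the time shift $t \mapsto t - \tau_n$, so that its initial time becomes $\tau_n$ rather than $0$. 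The claimed martingale property on $[\tau_n, \infty)$ then follows by setting $u = t - \tau_n$ and invoking the identification $\mbb L_k f(\cdot, k) = \mcl L^{(k)} f$ for spatial $f$, with the filtration shifted accordingly.

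The main technical obstacle is the measurable handling of this construction: one must verify that $(\varphi, k) \mapsto \mbb P_k^{(\varphi)}$ is a stochastic kernel and that the shift by the random time $\tau_n$ is carried out consistently across $\omega_2 \in \Omega_2$. The kernel property follows from the uniqueness part of Theorem \ref{thm:weaksolution} via standard Stroock-Varadhan type arguments (cf. \cite{1997Multidimensional}). The overall consistency of the construction hinges on the fact that $\widehat Q$ in \eqref{eq:qhat} is independent of $\varphi$: the switching trajectory $\Lambda$, together with its jump times $\tau_n$, can be realized on $\Omega_2$ independently of $W$, so the continuous dynamics on $\Omega_1$ can be spliced in afterwards through the measurable family $\{\mbb P_k^{(\varphi)}\}$ without any circular dependence between the two components. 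This structural decoupling is precisely the reason this section replaces $Q(\varphi)$ by $\widehat Q$.
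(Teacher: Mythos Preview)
Your proposal is correct and matches the paper's approach: both invoke Theorem \ref{thm:weaksolution} to obtain the fixed-switching kernel $\{\mathbb{P}_k^{(\varphi)}\}$, establish its measurability in $\varphi$, and exploit that $\tau_n$ depends only on $\omega_2$ (because $\widehat{Q}$ is $\varphi$-independent) so that, for each fixed $\omega_2$, the random starting time may be treated as a constant on $(\Omega_1,\mathcal G)$. The paper implements this concretely by setting $\mathbb{P}_{\Lambda(\tau_1)}^{(W_{\tau_1})}:=\sum_{l\neq k}\mathbf{1}_{\{\Lambda(\tau_1)=l\}}\mathbb{P}_l^{(W_{\tau_1})}$, where $\mathbb{P}_l^{(W_{\tau_1})}$ is the mixture of $\mathbb{P}_l^{(\varphi')}$ over the law $\nu_1$ of $W_{\tau_1(\omega_2)}$ under $\mathbb{P}_k^{(\varphi)}$, and then verifies the martingale property directly---this is equivalent to your time-shift/specialization description.
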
 
\begin{proof}
	Let us prove the lemma for the case when $n=1$. The proof for the general case is similar. For each $j \in \mathbb{S}$, it follows from Theorem \ref{thm:weaksolution} that for any $\varphi=(\varphi_1,\varphi_2) \in \mcl C_{r}^{2 d}$, the probability measure $\mathbb{P}_j^{(\varphi)}$ on $C([0, \infty), \mcl C_{r}^{2 d})$ is the unique solution to the martingale problem for the operator $\mathbb{L}_j$ starting from $\varphi$; that is, $\mathbb{P}_j^{(\varphi)}\left\{\omega_1: W_0(\omega_1)=\varphi\right\}=1$ and for each function $f \in C_c^{2}(\mathbb{R}^{2 d})$,
	\begin{equation}\label{eq:martk}
		f\left(W\left(t, \omega_1\right)\right)-f\left(W\left(0, \omega_1\right)\right)-\int_0^t \mathbb{L}_j(W_s) f\left(W(s)\left(\omega_1\right)\right) \mathrm{d} s, \quad t \geq 0
	\end{equation}
	is a $\left\{\mathcal{G}_t\right\}$-martingale with respect to $\mathbb{P}_j^{(\varphi)}$.   First, we shall show that for any $A \in \mathcal{G}$ and $k \in \mathbb{S}$, the function $\varphi \mapsto \mathbb{P}_k^{(\varphi)}(A)$ is measurable with respect to  the $\sigma$-field $\mathcal{G}$. It suffices to verify the property when the set $A$ is of the form:
	$$\left\{Z^{(k)}_{t_1} \in B_1, Z^{(k)}_{t_2} \in B_2,\dots, Z^{(k)}_{t_m}  \in B_m\right\},$$where $m \in \mathbb{N}, 0 \leq t_1<\cdots<t_m$, and $B_1, B_2, \ldots, B_m \in \mathcal{G}$.  Indeed, since
	$$
	\begin{aligned}
		& \mathbb{P}_k^{(\varphi)}\left\{Z^{(k)}_{t_1} \in B_1, Z^{(k)}_{t_2} \in B_2,\ldots, Z^{(k)}_{t_m}  \in B_m\right\} \\
		& \quad=\int_{B_1}\int_{B_2} \ldots \int_{B_m} P_k\left(t_1, \varphi, \varphi^1\right) P_k\left(t_2-t_1, \varphi^1, \varphi^2\right) \dots P_k\left(t_m-t_{m-1}, \varphi^{m-1}, \varphi^m\right) \mathrm{d} \varphi^m \mathrm{d} \varphi^{m-1} \ldots \mathrm{d} \varphi^1,
	\end{aligned}
	$$
	where $P_k(t, \varphi, \cdot):=\mathbb{P}_k^{(\varphi)}\{Z_t^{(k)}\in\cdot \} $ is the transition probability function of $Z_t^{(k)}$, the measurability is obvious. Thus, for $A \in \mathcal{G}$, we can define
	$$
	\mathbb{P}_j^{(W_{\tau_1\left(\omega_2\right)})}(A):=\int_{\mcl C_{r}^{2 d}} \mathbb{P}_j^{(\varphi^{\prime})}(A) \nu_1\left(\mathrm{d} \varphi^{\prime}\right),
	$$
	where $\nu_1\left(\cdot, \omega_2\right):=\mathbb{P}_k^{(\varphi)}\left\{\omega_1: W_{\tau_1\left(\omega_2\right)}\left(\omega_1\right) \in \cdot\right\}$ be the law of $W_{\tau_1\left(\omega_2\right)}$ under $\mathbb{P}_k^{(\varphi)}$ for each $\omega_2 \in D([0, \infty), \mathbb{S})$.  For each $j \in \mathbb{S}\backslash\{k\}$ and any $f \in C_c^2(\mathbb{R}^{2 d})$, we consider the following process
	$$
	\theta_j\left(t, \omega_1\right):=1_{\left\{\Lambda\left(\tau_1\left(\omega_2\right), \omega_2\right)=j\right\}}\left[f\left(W\left(t, \omega_1\right)\right)-f\left(W\left(\tau_1\left(\omega_2\right), \omega_1\right)\right)-\int_{\tau_1\left(\omega_2\right)}^t \mathbb{L}_j(W_s) f\left(W(s)\left(\omega_1\right)\right) \mathrm{d} s\right],
	$$
	$t \geq \tau_1\left(\omega_2\right)$. Recall the fact that $\widehat{Q}$ is independent of $\varphi$. Hence, for each $\omega_2 \in \Omega_2, \tau_1\left(\omega_2\right)$ is independent of $\omega_1$ and it can be regarded as a constant on $\left(\Omega_1, \mathcal{G}\right)$. In addition, for any $\tau_1\left(\omega_2\right) \leq t_1 \leq t_2$ and $A \in \mathcal{G}_{t_1}$, we have
	$$
	\begin{aligned}
		\int_A \theta_j\left(t_2,\omega_1\right) \mathbb{P}_j^{(W_{\tau_1(\omega_2)})}\left(\mathrm{d} \omega_1\right) & =\int_A  \theta_j\left(t_2,\omega_1\right) \int_{\mcl C_{r}^{2 d}}\mathbb{P}_j^{(\varphi^{\prime})}\left(\mathrm{d} \omega_1\right) \nu_1\left(\mathrm{d}\varphi^{\prime}\right)\\
		&=\int_{\mcl C_{r}^{2 d}} \int_A \theta_j\left(t_2,\omega_1\right) \mathbb{P}_j^{(\varphi^{\prime})}\left(\mathrm{d} \omega_1\right) \nu_1\left(\mathrm{d}\varphi^{\prime}\right) \\
		& =\int_{\mcl C_{r}^{2 d}} \int_A \theta_j\left(t_1,\omega_1\right) \mathbb{P}_j^{(\varphi^{\prime})}\left(\mathrm{d} \omega_1\right) \nu_1\left(\mathrm{d}\varphi^{\prime}\right)\\
		&=\int_A \theta_j\left(t_1,\omega_1\right) \mathbb{P}_j^{(W_{\tau_1(\omega_2)})}\left(\mathrm{d} \omega_1\right),
	\end{aligned}
	$$
	where we used \eqref{eq:martk} to obtain the third equality above. Thus, $\left\{\theta_j(t), t \geq \tau_1\left(\omega_2\right)\right\}$ is a martingale under $\mathbb{P}_j^{(W_{\tau_1})}$ with respect to $\left\{\mathcal{G}_t\right\}$.
	
	Next, let us define
	$$
	\mathbb{P}_{\Lambda\left(\tau_1\left(\omega_2\right)\right)}^{(W_{\tau_1\left(\omega_2\right)})}(A):=\sum_{l \neq k} 1_{\left\{\Lambda\left(\tau_1\left(\omega_2\right),\omega_2\right\}=l\right\}} \mathbb{P}_l^{(W_{\tau_1\left(\omega_2\right)})}(A), \quad A \in \mathcal{G} .
	$$
	Apparently for each $\omega_2 \in \Omega_2, \mathbb{P}_{\Lambda\left(\tau_1\left(\omega_2\right)\right)}^{(W_{\tau_1\left(\omega_2\right)})}$ is a probability measure on $\left(\Omega_1, \mathcal{G}\right)$. For simplicity, let us write $\mathbb{P}_{\Lambda\left(\tau_1\left(\omega_2\right)\right)}^{(W_{\tau_1\left(\omega_2\right)})}$ for $\mathbb{P}_{\Lambda\left(\tau_1\right)}^{(W_{\tau_1})}$ in the sequel. We need to show that for each $\omega_2 \in \Omega_2$,
	$$
	\theta\left(t, \omega_1\right):=f\left(W\left(t, \omega_1\right)\right)-f\left(W\left(\tau_1\left(\omega_2\right), \omega_1\right)\right)-\int_{\tau_1\left(\omega_2\right)}^t \mathbb{L}_{\Lambda\left(\tau\left(\omega_2\right), \omega_2\right)}(W_s) f\left(W(s)\left(\omega_1\right)\right) \mathrm{d} s, \quad t \geq \tau_1\left(\omega_2\right),
	$$
	is a martingale under $\mathbb{P}_{\Lambda\left(\tau_1\right)}^{(W_{\tau_1})}$. To this end, let $\tau_1\left(\omega_2\right) \leq t_1 \leq t_2$ and $A \in \mathcal{G}_{t_1}$ be given arbitrarily. Note that $\sum_{j \neq{k}} \mathbf{1}_{\left\{\Lambda\left(\tau_1\left(\omega_2\right), \omega_2\right)=j\right\}}=1$ and hence
	$$
	\theta\left(t, \omega_1\right)=\sum_{j \neq{k}} \theta\left(t, \omega_1\right) \mathbf{1}_{\left\{\Lambda\left(\tau_1\left(\omega_2\right) ,\omega_2\right)=j\right\}}=\sum_{j \neq{k}} \theta_j\left(t, \omega_1\right)
	$$
	for any $t \geq 0$. Therefore, we can compute
	$$
	\begin{aligned}
		\int_A \theta\left(t_2,\omega_1\right) \mathbb{P}_{\Lambda\left(\tau_1\right)}^{(W_{\tau_1})}\left(\mathrm{d} \omega_1\right)&=\int_A \sum_{j\neq k} \theta\left(t_2,\omega_1\right) 1_{\left\{\Lambda\left(\tau_1\right)=j\right\}} \sum_{l \neq k} 1_{\left\{\Lambda\left(\tau_1\right)=l\right\}} \mathbb{P}_l^{(W_{\tau_1})}\left(\mathrm{d} \omega_1\right) \\
		& =\int_A \sum_{j\neq k} \theta\left(t_2,\omega_1\right) 1_{\{\Lambda(\tau)=j\}} \mathbb{P}_j^{(W_{\tau_1})}\left(\mathrm{d} \omega_1\right)\\
		& =\sum_{j\neq k} \int_A \theta_j\left(t_2,\omega_1\right) \mathbb{P}_j^{(W_{\tau_1})}\left(\mathrm{d} \omega_1\right)\\
		&=\sum_{j\neq k} \int_A \theta_j\left(t_1,\omega_1\right) \mathbb{P}_j^{(W_{\tau_1})}\left(\mathrm{d} \omega_1\right)\\
		& =\int_A \sum_{j\neq k} \theta\left(t_1,\omega_1\right) \mathbf{1}_{\left\{\Lambda\left(\tau_1\right)=j\right\}} \sum_{l \neq k} \mathbf{1}_{\left\{\Lambda\left(\tau_1=l\right\}\right.} \mathbb{P}_l^{(W_{\tau_1})}\left(\mathrm{d} \omega_1\right)\\
		&=\int_A \theta\left(t_1,\omega_1\right) \mathbb{P}_{\Lambda\left(\tau_1\right)}^{(W_{\tau_1})}\left(\mathrm{d} \omega_1\right). \\
	\end{aligned}
	$$
	Hence, $\left\{\theta(t,\omega_1), t \geq \tau_1\left(\omega_2\right)\right\}$ is a martingale under $\mathbb{P}_{\Lambda\left(\tau_1\right)}^{(W_{\tau_1})}$ for each $\omega_2 \in \Omega_2$.
\end{proof}

Now we present the main result of this section.
\begin{theorem}\label{thm:markovsolution}
	Under Assumptions $\textbf{(A1)-(A4)}$, for any given $(\varphi, k) \in \mcl C_{r}^{2 d} \times \mathbb{S}$, there exists a unique martingale solution $\widehat{\mathbb{P}}^{(\varphi, k)}$ on $C([0, \infty), \mcl C_{r}^{2 d}) \times D([0, \infty), \mathbb{S})$ for the operator $\widehat{\mathcal{A}}$ starting from $(\varphi, k)$.
\end{theorem}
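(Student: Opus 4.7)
The plan is to build $\widehat{\mathbb P}^{(\varphi,k)}$ by an ``interlacing'' procedure that glues together the pure-diffusion martingale solutions $\mathbb P_j^{(\psi)}$ (from Theorem \ref{thm:weaksolution}) and the pure-jump martingale solution $\mathbb Q^{(k)}$ for $\widehat Q$, using the fact that $\widehat Q$ is \emph{independent of} $\varphi$ and the jump times $\tau_n\uparrow\infty$ almost surely under $\mathbb Q^{(k)}$ by \eqref{eq:q1}. Concretely, I will define $\widehat{\mathbb P}^{(\varphi,k)}$ inductively on the $\sigma$-fields $\mathcal F_{\tau_n}$ and then extend to $\mathcal F=\bigvee_n \mathcal F_{\tau_n}$.

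For existence I proceed as follows. Set $\widehat{\mathbb P}^{(\varphi,k)}\big|_{\mathcal F_{\tau_1}}:=\mathbb P_k^{(\varphi)}\otimes \mathbb Q^{(k)}\big|_{\mathcal F_{\tau_1}}$ so that, on $[0,\tau_1)$, the discrete component is frozen at $k$ while $W$ follows the law produced by Theorem \ref{thm:weaksolution} with switching mode $k$. Since $\widehat Q$ does not depend on $\varphi$, the jump time $\tau_1$ and the post-jump state $\Lambda(\tau_1)$ are determined by $\omega_2$ alone, hence $W_{\tau_1}$ is $\mathbb P_k^{(\varphi)}$-distributed on $\Omega_1$ and $\Lambda(\tau_1)\in\mathbb S\setminus\{k\}$ is drawn by the embedded chain of $\widehat Q$, \emph{independently} of $W$. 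Assume inductively that $\widehat{\mathbb P}^{(\varphi,k)}$ has been defined on $\mathcal F_{\tau_n}$; using the regular conditional probabilities $\mathbb P_{\Lambda(\tau_n)}^{(W_{\tau_n})}$ constructed in Lemma \ref{lem:stopmart}, extend it to $\mathcal F_{\tau_{n+1}}$ by prescribing that, conditionally on $\mathcal F_{\tau_n}$, the pair $(W,\Lambda)$ evolves on $[\tau_n,\tau_{n+1})$ according to $\mathbb P_{\Lambda(\tau_n)}^{(W_{\tau_n})}\otimes \mathbb Q^{(\Lambda(\tau_n))}$ translated by $\tau_n$. Because $\tau_n\uparrow\infty$ $\mathbb Q^{(k)}$-a.s., Tulcea's extension theorem (as in the proof of Theorem \ref{thm:weaksolution}) produces a unique probability measure $\widehat{\mathbb P}^{(\varphi,k)}$ on $(\Omega,\mathcal F)$ agreeing with the inductive pieces on each $\mathcal F_{\tau_n}$.

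To verify the martingale property, fix $f\in C_c^2(\mathbb R^{2d}\times\mathbb S)$ and decompose
\[
\widehat M_t^{(f)}=\sum_{n\geq 0}\Bigl\{f(W(t\wedge\tau_{n+1}),\Lambda(t\wedge\tau_{n+1}))-f(W(t\wedge\tau_n),\Lambda(t\wedge\tau_n))-\int_{t\wedge\tau_n}^{t\wedge\tau_{n+1}}(\widehat{\mathcal A}f)(W_s,\Lambda(s))\,\mathrm d s\Bigr\}.
\]
On the interior $(\tau_n,\tau_{n+1})$ the coordinate $\Lambda$ is constant at $\Lambda(\tau_n)$, so $\widehat Q f(W(s),\Lambda(s))=0$ Lebesgue-almost-everywhere there and Lemma \ref{lem:stopmart} shows the continuous piece is a martingale. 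At each jump time $\tau_{n+1}$ only $\Lambda$ changes, and writing the jumps as an integral against the Poisson random measure $N$ via the representation \eqref{eq:operatorqspec}, the compensation by $\mathfrak m(\mathrm d u)\mathrm d s$ produces precisely $\int \widehat Q f(W(s),\Lambda(s))\,\mathrm d s$; the remainder is a $\widetilde N$-stochastic integral, hence an $\{\mathcal F_t\}$-martingale under $\widehat{\mathbb P}^{(\varphi,k)}$. Summing across $n$ and using dominated convergence (legitimate since $f\in C_c^2$ and $|\widehat{\mathcal A}f|$ is bounded by $\textbf{(A1)}$--$\textbf{(A4)}$) yields that $\widehat M_t^{(f)}$ is an $\{\mathcal F_t\}$-martingale.

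For uniqueness, let $\widehat{\mathbb P}_1, \widehat{\mathbb P}_2$ be two martingale solutions starting from $(\varphi,k)$. Testing on functions of the form $f(z,j)=g(z)\mathbf 1_{\{j=k\}}$ with $g\in C_c^2(\mathbb R^{2d})$ and stopping at $\tau_1$ reduces the pre-jump dynamics to the $\mathbb L_k$-martingale problem, whose unique solution $\mathbb P_k^{(\varphi)}$ is given by Theorem \ref{thm:weaksolution}; testing on $f$ depending only on $j$ and using that $\widehat Q$ does not depend on $\varphi$, the $\Lambda$-marginal up to $\tau_1$ is determined by the unique $\mathbb Q^{(k)}$. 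By the strong Markov property of martingale solutions and induction on $n$, $\widehat{\mathbb P}_1$ and $\widehat{\mathbb P}_2$ agree on each $\mathcal F_{\tau_n}$, hence on $\mathcal F$. The principal obstacle is, as anticipated, the careful bookkeeping at the jump instants: one must be sure that the pasted conditional laws are jointly measurable in $(\omega_1,\omega_2)$ and that the $\widehat Q$-term produced by the compensator of $N$ matches $\widehat Q f$ evaluated along the trajectory of $\Lambda$ — this is where the $\varphi$-independence of $\widehat Q$ (and hence of $\tau_n$ as functions of $\omega_2$) is indispensable, exactly as emphasized in Lemma \ref{lem:stopmart}.
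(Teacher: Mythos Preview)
Your overall strategy is the same as the paper's: build $\widehat{\mathbb P}^{(\varphi,k)}$ by interlacing the diffusion laws $\mathbb P_j^{(\psi)}$ with the Markov-chain law $\mathbb Q^{(k)}$, paste the pieces via $\mathbb P^{(n+1)}=\mathbb P^{(n)}\otimes_{\tau_n}\bigl(\mathbb P_{\Lambda(\tau_n)}^{(W_{\tau_n})}\times\mathbb Q^{(\Lambda(\tau_n))}\bigr)$, and extend by Tulcea's theorem. The uniqueness sketch is also in the same spirit (the paper simply cites \cite{XI20184277}).

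There is, however, a genuine error in your verification of the martingale property. You write that on $(\tau_n,\tau_{n+1})$ the process $\Lambda$ is constant, ``so $\widehat Q f(W(s),\Lambda(s))=0$ Lebesgue-almost-everywhere there''. This is false: for $\Lambda(s)=j$ fixed,
\[
\widehat Q f(W(s),j)=\sum_{l\neq j}\hat q_{jl}\bigl(f(W(s),l)-f(W(s),j)\bigr)
\]
is the \emph{compensator rate} and is generically nonzero between jumps. Your very next sentence in fact contradicts the claim, since you (correctly) say that compensating the jumps of $f$ against $\mathfrak m(\d u)\d s$ ``produces precisely $\int\widehat Q f(W(s),\Lambda(s))\,\d s$'' --- an integral that would vanish identically if the integrand were zero on each interjump interval. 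So the decomposition as written does not balance.

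The paper's computation (its equation \eqref{eq:mmart}) shows what actually happens on $[0,\tau_1\wedge t]$: one adds and subtracts $f(W(\tau_1\wedge t),\Lambda(0))$ to split
\[
\widehat M_{\tau_1\wedge t}^{(f)}
=\Bigl[f(W(\tau_1\wedge t),\Lambda(0))-f(W(0),\Lambda(0))-\int_0^{\tau_1\wedge t}\mathbb L_{\Lambda(0)}(W_s)f(W(s),\Lambda(0))\,\d s\Bigr]
+\Bigl[\text{jump part}\Bigr],
\]
where the first bracket is a $\mathbb P_k^{(\varphi)}$-martingale by Lemma \ref{lem:stopmart}, and the second bracket --- the single jump at $\tau_1$ \emph{together with} the nonzero running term $-\int_0^{\tau_1\wedge t}\widehat Q f(W(s),\Lambda(s))\,\d s$ --- is rewritten via \eqref{eq:operatorqspec} as a $\widetilde N$-stochastic integral, hence a $\mathbb Q^{(k)}$-martingale. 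The point is that the $\widehat Q f$ integral over the interjump interval is exactly what is needed to compensate the Poisson random measure; it is not zero. Once you correct this, your argument coincides with the paper's. A minor additional remark: to invoke Tulcea you need $\mathbb P^{(n)}\{\tau_n\le t\}\to 0$ for the \emph{pasted} measures, not merely $\mathbb Q^{(k)}$-a.s.; the paper closes this by showing the $\Lambda$-marginal of each $\mathbb P^{(n)}$ equals $\mathbb Q^{(k)}$ via uniqueness of the $\widehat Q$-martingale problem.
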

\begin{proof}
	For any given $(\varphi, k) \in \mcl C_{r}^{2 d} \times \mathbb{S}$, we define a series of probability measures on $(\Omega, \mathcal{F})=$ $\left(\Omega_1 \times \Omega_2, \mathcal{G} \otimes \mathcal{N}\right)$ as follows:
	\begin{equation}\label{eq:pmeasure}
		\mathbb{P}^{(1)}=\mathbb{P}_k^{(\varphi)} \times \mathbb{Q}^{(k)}, \quad \text { and for } n \geq 1, \quad \mathbb{P}^{(n+1)}=\mathbb{P}^{(n)} \otimes_{\tau_n} \left(\mathbb{P}_{\Lambda\left(\tau_n\right)}^{(W_{\tau_n})} \times \mathbb{Q}^{(\Lambda\left(\tau_n\right))}\right),
	\end{equation}
	where $\tau_n\left(\omega_1, \omega_2\right):=\tau_n\left(\omega_2\right)$ is the switching time defined in \eqref{eq:stopping}, $\mathbb{P}_{\Lambda\left(\tau_n\right)}^{\left(W\left(\tau_n\right)\right)}$ is the probability measure on $\left(\Omega_1, \mathcal{G}\right)$ as in Lemma \ref{lem:stopmart}, and $\mathbb{Q}^{\left(\Lambda\left(\tau_n\right)\right)}$ is the regular conditional probability distribution of $\mathbb{Q}^{(k)}$ with respect to $\mathcal{N}_{\tau_{\mathrm{n}}}$. Thanks to \cite[Theorem 6.1.2]{1997Multidimensional}, $\mathbb{P}^{(n+1)}=\mathbb{P}^{(n)}$ on $\mathcal{F}_{\tau_{\mathrm{n}}}$.
	Let $f \in C_c^2(\mathbb{R}^{2 d} \times \mathbb{S})$. We have
	$$
	f\left(W\left(\tau_1 \wedge t\right), k\right)-f(W(0), k)-\int_0^{\tau_1 \wedge t} \mathbb{L}_k(W_s) f(W(s), k) \mathrm{d} s
	$$
	is a martingale with respect to $\mathbb{P}_k^{(\varphi)}$ and hence $\mathbb{P}^{(1)}$. On the other hand, using \eqref{eq:operatorqspec}, we can write
	
	\begin{align}
		\int_0^{\tau_1 \wedge t} &\nonumber \widehat{Q} f(W(s), \Lambda(s)) \mathrm{d} s \\\nonumber
		&=\int_0^{\tau_1 \wedge t}\int_{[0,H]}\left[ f(W(s),\Lambda(s-)+h(\Lambda(s-),u))-f(W(s),\Lambda(s-))\right] \mathfrak{m}(\d u)\d s\\\nonumber
		&=-\int_0^{\tau_1 \wedge t}\int_{[0,H]}\left[ f(W(s),\Lambda(s-)+h(\Lambda(s-),u))-f(W(s),\Lambda(s-))\right] \left({N}(\d s,\d u)-\mathfrak{m}(\d u)\d s\right)\\\nonumber
		&\quad+\int_0^{\tau_1 \wedge t}\int_{[0,H]}\left[ f(W(s),\Lambda(s-)+h(\Lambda(s-),u))-f(W(s),\Lambda(s-))\right]{N}(\d s,\d u)\\\nonumber
		&=-\int_0^{\tau_1 \wedge t}\int_{[0,H]}\left[ f(W(s),\Lambda(s-)+h(\Lambda(s-),u))-f(W(s),\Lambda(s-))\right]\widetilde{N}(\d s,\d u)\\\nonumber
		&\quad+f\left(W\left(\tau_1 \wedge t\right), \Lambda\left(\tau_1 \wedge t\right)\right)-f\left(W\left(\tau_1 \wedge t\right), \Lambda\left(\tau_1 \wedge t-\right)\right)\nonumber.
	\end{align}
	Then, using this and the definitions of the operators $\widehat{\mathcal{A}}$, $\mathbb{L}_k$ and $\widehat{Q}$, we have
	\begin{align}\label{eq:mmart}
			\widehat{M}_{\tau_1 \wedge t}^{(f)}=\nonumber& f\left(W\left(\tau_1 \wedge t\right), \Lambda\left(\tau_1 \wedge t\right)\right)-f(W(0), \Lambda(0))-\int_0^{\tau_1 \wedge t} \widehat{\mathcal{A}}(W_s) f(W(s), \Lambda(s)) \mathrm{d} s \\\nonumber
			= & f\left(W\left(\tau_1 \wedge t\right), \Lambda\left(\tau_1 \wedge t\right)\right)-f\left(W\left(\tau_1 \wedge t\right), \Lambda(0)\right) \\\nonumber
			& +f\left(W\left(\tau_1 \wedge t\right), \Lambda(0)\right)-f(W(0), \Lambda(0))-\int_0^{\tau_1 \wedge t} \mathbb{L}_{\Lambda(0)}(W_s) f(W(s), \Lambda(0)) \mathrm{d} s \\\nonumber
			& +\int_0^{\tau_1 \wedge t} \mathbb{L}_{\Lambda(0)}(W_s) f(W(s), \Lambda(0)) \mathrm{d} s-\int_0^{\tau_1 \wedge t} \widehat{\mathcal{A}}(W_s) f(W(s), \Lambda(s)) \mathrm{d} s \\
			= & f\left(W\left(\tau_1 \wedge t\right), \Lambda\left(\tau_1 \wedge t\right)\right)-f\left(W\left(\tau_1 \wedge t\right), \Lambda(0)\right)-\int_0^{\tau_1 \wedge t} \widehat{Q} f(W(s), \Lambda(s)) \mathrm{d} s
			\\\nonumber
			& + f\left(W\left(\tau_1 \wedge t\right), \Lambda(0)\right)-f(W(0), \Lambda(0))-\int_0^{\tau_1 \wedge t} \mathbb{L}_{\Lambda(0)}(W_s) f(W(s), \Lambda(0)) \mathrm{d} s\\\nonumber
			= & \int_0^{\tau_1 \wedge t}\int_{[0,H]}\left[ f(W(s),\Lambda(s-)+h(\Lambda(s-),u))-f(W(s),\Lambda(s-))\right]\widetilde{N}(\d s,\d u)
			\\\nonumber
			& + f\left(W\left(\tau_1 \wedge t\right), \Lambda(0)\right)-f(W(0), \Lambda(0))-\int_0^{\tau_1 \wedge t} \mathbb{L}_{\Lambda(0)}(W_s) f(W(s), \Lambda(0)) \mathrm{d} s.\nonumber
			\end{align}
	Recall from \eqref{eq:nmart} that $\widetilde{N}$ is a martingale measure with respect to $\mathbb{Q}^{(k)}$ and hence $\mathbb{P}^{(1)}$. Thus it follows that $\widehat{M}_{\tau_1 \wedge}^{(f)}$. is a martingale with respect to $\mathbb{P}^{(1)}$.
	Next, thanks to Lemma \ref{lem:stopmart},
	$$
	f\left(W\left(\tau_2 \wedge t\right), \Lambda\left(\tau_1\right)\right)-f\left(W\left(\tau_1\right), \Lambda\left(\tau_1\right)\right)-\int_{\tau_1}^{\tau_2 \wedge t} \mathbb{L}_{\Lambda\left(\tau_1\right)}(W_s) f\left(W(s), \Lambda\left(\tau_1\right)\right) \mathrm{d} s, \quad t \geq \tau_1
	$$
	is a martingale with respect to $\mathbb{P}_{\Lambda\left(\tau_1\right)}^{(W_{ \tau_1})}$ and hence also $\mathbb{P}_{\Lambda\left(\tau_1\right)}^{(W_{ \tau_1})} \times \mathbb{Q}^{\left(\Lambda\left(\tau_1\right)\right)}$. Similar calculations as those in \eqref{eq:mmart} give that
	$$
	\begin{aligned}
		f&\left( W\left(\tau_2 \wedge t\right), \Lambda\left(\tau_2 \wedge t\right)\right)-f\left(W\left(\tau_1\right), \Lambda\left(\tau_1\right)\right)-\int_{\tau_1}^{\tau_2 \wedge t} \widehat{\mathcal{A}}(W_s) f(W(s), \Lambda(s)) \mathrm{d} s \\
		&= f\left(W\left(\tau_2 \wedge t\right), \Lambda\left(\tau_1\right)\right)-f\left(W\left(\tau_1\right), \Lambda\left(\tau_1\right)\right)-\int_{\tau_1}^{\tau_2 \wedge t} \mathbb{L}_{\Lambda\left(\tau_1\right)}(W_s) f\left(W(s), \Lambda\left(\tau_1\right)\right) \mathrm{d} s \\
		&\quad+\int_{\tau_1}^{\tau_2 \wedge t}\int_{[0,H]}[f(W(s), \Lambda(s-)+h(\Lambda(s-),u))-f(W(s), \Lambda(s-))] \widetilde{N}(\mathrm{d} s, \mathrm{d} u), \quad t \geq \tau_1 .
	\end{aligned}
	$$
	Since $\mathbb{Q}^{\left(\Lambda\left(\tau_1\right)\right)}$ is a regular conditional probability distribution of $\mathbb{Q}^{(k)}$ with respect to $\mathcal{N}_{\tau_1}$, it follows that for all $A\in \mathcal{B}([0,H])$, $\widetilde{N}([0,t],A), t \geq \tau_1$ is a martingale with respect to $\mathbb{Q}^{(\Lambda\left(\tau_1\right))}$. Consequently the expression in the last line of the previous sentence is a martingale with respect to $\mathbb{Q}^{(\Lambda\left(\tau_1\right))}$ and hence also $\mathbb{P}_{\Lambda(\tau_1)}^{(W_{\tau_1})} \times \mathbb{Q}^{\left(\Lambda\left(\tau_1\right)\right)}$. Then the left hand side of the previous sentence, which is equal to $\widehat{M}_{\tau_2 \wedge t}^{(f)}-\widehat{M}_{\tau_1 \wedge t}^{(f)}$, is a	martingale with respect to $\mathbb{P}_{\Lambda\left(\tau_1\right)}^{(W_{\tau_1})} \times \mathbb{Q}^{\left(\Lambda\left(\tau_1\right)\right)}$. Therefore in view of \cite[Theorem 6.1.2]{1997Multidimensional}, $\widehat{M}_{\tau_2 \wedge \cdot}^{(f)}$ is a martingale with respect to $\mbb P^{(2)}$. In a similar fashion, we can show that $\widehat{M}_{\tau_n \wedge \cdot}^{(f)}$ is a martingale with respect to $\mathbb{P}^{(n)}$ for any $n \geq 1$.
	
	Next we show that $\lim _{n \rightarrow \infty} \mathbb{P}^{(n)}\left\{\tau_n \leq t\right\}=0$ for any $t \geq 0$. To this end, we consider functions of the form $f(x, k)=g(k)$, where $g \in \mathcal{B}(\mathbb{S})$. Then,
	$$
	\widehat{M}_{\tau_n \wedge\cdot }^{(g)}=g(\Lambda(\tau_n \wedge\cdot))-g(\Lambda(0))-\int_0^{\tau_n \wedge\cdot} \widehat{Q} g(\Lambda(s)) \mathrm{d} s
	$$
	is a martingale with respect to $\mathbb{P}^{(n)}$. Moreover, for any $A \in \mathcal{N}$, we define $\widehat{\mathbb{Q}}(A):=\mathbb{P}^{(n)}\left\{\Omega_1 \times A\right\}$. Then $\widehat{M}_{\tau_n \wedge\cdot}^{(g)}$ is a martingale with respect to $\widehat{\mathbb{Q}}$. On the other hand, $\widehat{M}_t^{(g)}$ is a martingale with respect to $\mathbb{Q}^{(k)}$. Then, $\widehat{M}_{\tau_n \wedge\cdot}^{(g)}$ is a martingale with respect to $\mathbb{Q}^{(k)}$ as well. By the uniqueness for the martingale problem for $\widehat{Q}$, we have $\widehat{\mathbb{Q}}=\mathbb{Q}^{(k)}$. Therefore it follows from \eqref{eq:q1} that
	$$
	\mathbb{P}^{(n)}\left\{\tau_n \leq t\right\}=\widehat{\mathbb{Q}}\left\{\tau_n \leq t\right\}=\mathbb{Q}^{(k)}\left\{\tau_n \leq t\right\} \rightarrow 0 \text {, as } n \rightarrow \infty.
	$$
	Recall that the probabilities $\mathbb{P}^{(n)}$ constructed in \eqref{eq:pmeasure} satisfies $\mathbb{P}^{(n+1)}=\mathbb{P}^{(n)}$ on $\mathcal{F}_{\tau_n}$. Hence by Tulcea's extension theorem (see, e.g., \cite[Theorem 1.3.5]{1997Multidimensional}), there exists a unique $\widehat{\mathbb{P}}$ on $(\Omega, \mathcal{F})$ such that $\widehat{\mathbb{P}}$ equals $\mathbb{P}^{(n)}$ on $\mathcal{F}_{\tau_n}$. Thus it follows that $\widehat{M}_{\tau_n \wedge\cdot}^{(f)}$ is a martingale with respect to $\widehat{\mathbb{P}}$ for every $n\geq 1$. In addition, for any $t\geq 0$, we have
	\begin{equation}\label{eq:shuxi}
		\widehat{\mathbb{P}}\left\{\tau_n \leq t\right\}=\mathbb{P}^{(n)}\left\{\tau_n \leq t\right\} \rightarrow 0, \text { as } n \rightarrow \infty .
	\end{equation}
	Thus $\tau_n \rightarrow \infty$ a.s.$\widehat{\mbb P}$ and hence $\widehat{M}_t^{(f)}$ is a local martingale with respect to $\widehat{\mathbb{P}}$. Note that $f \in C_c^2(\mathbb{R}^{2 d} \times \mathbb{S})$. Thus, $\widehat{M}_t^{(f)}$ is a martingale with respect to $\widehat{\mathbb{P}}$. This establishes that $\widehat{\mbb P}$ is the desired martingale solution staring from $(\varphi, k)$ to the martingale problem for $\widehat{\mathcal{A}}$. When we wish to emphasize the initial data dependence $W_0=\varphi$ and $\Lambda(0)=k$, we write this martingale solution as $\widehat{\mathbb{P}}^{(\varphi, k)}$. This establishes the existence of a martingale solution for the operator $\widehat{\mathcal{A}}$. The proof of uniqueness is very similar to that in \cite{XI20184277} and we omit the details here for brevity.
\end{proof}

\section{Martingale solution: general state-dependent switching case}
In this section, we construct the martingale solution for the general case. Throughout the remainder of the section, $\widehat{\mbb P}^{(\varphi, k)}$ (or simply $\widehat{\mbb P}$ if there is no need to emphasize the initial condition) denotes the unique martingale solution to $\widehat{\mathcal{A}}$. The corresponding expectation is denoted by $\widehat{\mbb E}^{(\varphi, k)}$ or $\widehat{\mbb E}$.     
To proceed, for any given \( t \geq 0 \), we define a function \( M_{t} \) on the sample path space as follows:
\begin{equation}\label{eq:buyi}
	\begin{aligned}
		M_{t}(W_{\cdot}, \Lambda(\cdot)):= & \prod_{i=0}^{n(t)-1} \frac{q_{\Lambda\left(\tau_{i}\right) \Lambda\left(\tau_{i+1}\right)}\left(W_{\tau_{i+1}}\right)}{\hat{q}_{\Lambda\left(\tau_{i}\right) \Lambda\left(\tau_{i+1}\right)}}  \exp \left(-\sum_{i=0}^{n(t)} \int_{\tau_{i}}^{\tau_{i+1} \wedge t}\left[q_{\Lambda\left(\tau_{i}\right)}(W_s)-\hat{q}_{\Lambda\left(\tau_{i}\right)}\right] \mathrm{d} s\right),
	\end{aligned}
\end{equation}
where
\[
q_{k}(\varphi)=\sum_{l \neq{k}} q_{k l}(\varphi),\quad \hat{q}_{k}=\sum_{l \neq{k}} \hat{q}_{k l},\quad n(t)=\max \left\{i \in \mathbb{N}: \tau_{i} \leq t\right\},
\]
and \( \left\{\tau_{i}\right\} \) is the sequence of stopping times defined in \eqref{eq:stopping}. In case \( n(t)=0 \), we use the convention that \( \prod_{i=0}^{-1} a_{i}:=1 \).

\begin{lemma}\label{eq:ninini}
	Suppose that $\textbf{(A1)-(A4)}$ hold. Then \( (M_{t}, \mathcal{F}_{t}, \widehat{\mathbb{P}}) \) is a square-integrable martingale with $\widehat{\mbb E}[M_t]=1$ for all $t\geq0$.
\end{lemma}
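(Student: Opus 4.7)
The plan is to show that $(M_t)_{t \geq 0}$ is a non-negative local martingale under $(\mathcal F_t, \widehat{\mathbb P})$ that is uniformly bounded on every compact time interval by $e^{Ht}$. Once this is established, boundedness forces the local martingale to be a true square-integrable martingale, and $\widehat{\mathbb E}[M_t] = M_0 = 1$ follows from the obvious equality $M_0 = 1$. For the local martingale property I would localize along the switching times $\{\tau_n\}$ from \eqref{eq:stopping} and prove by induction on $n \geq 0$ that $(M_{t \wedge \tau_n})_{t \geq 0}$ is a true $(\mathcal F_t, \widehat{\mathbb P})$-martingale; the base case $n = 0$ is immediate since $M_{t \wedge \tau_0} \equiv 1$.

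The key inductive computation is to evaluate $\widehat{\mathbb E}[M_{t \wedge \tau_{n+1}} \mid \mathcal F_{\tau_n}]$ on $\{\tau_n \leq t\}$. Because $\widehat Q$ is independent of $\varphi$ (the same feature exploited in Lemma \ref{lem:stopmart}), the product construction \eqref{eq:pmeasure} used to build $\widehat{\mathbb P}$ in Theorem \ref{thm:markovsolution} implies that, conditional on $\mathcal F_{\tau_n}$ and $\{\Lambda(\tau_n) = k\}$, the pair $(\tau_{n+1} - \tau_n,\, \Lambda(\tau_{n+1}))$ is independent of the continuous trajectory $W|_{[\tau_n, \infty)}$, with $\tau_{n+1} - \tau_n$ exponential of rate $\hat q_k$ and $\Lambda(\tau_{n+1}) = l$ occurring with probability $\hat q_{kl}/\hat q_k$. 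Splitting the ratio $M_{t \wedge \tau_{n+1}}/M_{\tau_n}$ into its values on $\{\tau_{n+1} > t\}$ and $\{\tau_{n+1} \leq t\}$ and integrating against this joint law yields, after using the chain-rule identity
\[
\frac{d}{dr}\bigl(-e^{-\int_{\tau_n}^{r} q_k(W_s)\,ds}\bigr) = q_k(W_r)\,e^{-\int_{\tau_n}^{r} q_k(W_s)\,ds},
\]
the clean cancellation $\widehat{\mathbb E}\bigl[M_{t \wedge \tau_{n+1}}/M_{\tau_n} \mid \mathcal F_{\tau_n}, W\bigr] = e^{-\int_{\tau_n}^{t} q_k(W_s)\,ds} + \bigl(1 - e^{-\int_{\tau_n}^{t} q_k(W_s)\,ds}\bigr) = 1$. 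The martingale identity for general $s \leq t$ follows from this together with the inductive hypothesis via a standard tower-property argument.

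For the uniform bound, assumption $\textbf{(A4)}$ together with the definition \eqref{eq:qhat} gives $\hat q_{kl} = \sup_{\varphi} q_{kl}(\varphi) \geq q_{kl}(W_{\tau_{i+1}})$, so every factor in the product appearing in \eqref{eq:buyi} lies in $[0, 1]$, while the exponent satisfies $-\int_0^t (q_{\Lambda(s-)}(W_s) - \hat q_{\Lambda(s-)})\,ds \leq \int_0^t \hat q_{\Lambda(s-)}\,ds \leq Ht$, giving $0 \leq M_t \leq e^{Ht}$ for every $t \geq 0$. Combined with $\tau_n \to \infty$ $\widehat{\mathbb P}$-a.s. (see \eqref{eq:shuxi}), dominated convergence upgrades the localized martingale property into a true martingale property, and $\widehat{\mathbb E}[M_t^2] \leq e^{2Ht} < \infty$ gives square-integrability. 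The main obstacle is the clean extraction in Step 2 of the conditional independence of $(\tau_{n+1} - \tau_n, \Lambda(\tau_{n+1}))$ from $W|_{[\tau_n, \infty)}$ in the product construction \eqref{eq:pmeasure}, so that the explicit exponential waiting-time density can be used to integrate against the law of the next switching event; once this is in hand, the resulting cancellation reduces to the fundamental theorem of calculus.
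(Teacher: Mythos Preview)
Your argument is correct and takes a genuinely different route from the paper's. The paper represents $M_t$ as a Dol\'eans--Dade exponential: writing $M_t=\exp\{A(t)\}$ and applying It\^o's formula for jump processes, it obtains the stochastic-integral identity
\[
M_t-1=\int_0^t\!\!\int_{[0,H]} M_{s-}\,[g(\Lambda(s-),\Lambda(s-)+h(\Lambda(s-),u),W_s)-1]\,\widetilde N(\mathrm d s,\mathrm d u),
\]
then localizes with $T_n=\inf\{t:|M_t|>n\}$ and bounds $\widehat{\mathbb E}[M_{t\wedge T_n}^2]$ via Gronwall to get $\widehat{\mathbb E}[M_t^2]\le 2e^{8Ht}$. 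By contrast you never invoke the Poisson random measure: you exploit directly the product construction \eqref{eq:pmeasure} to integrate out the exponential holding time and the jump target, and you notice the deterministic bound $0\le M_t\le e^{Ht}$ (product factors $\le 1$ since $\hat q_{kl}=\sup_\varphi q_{kl}(\varphi)$, exponent $\le Ht$ since $q_k(\cdot)\ge 0$). This is more elementary, avoids the $\varepsilon$-regularization step needed to handle $q_{kl}(\varphi)=0$, and yields the sharper estimate $\widehat{\mathbb E}[M_t^2]\le e^{2Ht}$.

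Two remarks. First, your ``standard tower-property argument'' for intermediate $s\in(\tau_n,\tau_{n+1})$ really uses the memorylessness of the $\hat q_k$-exponential holding time: conditional on $\mathcal F_s\cap\{\tau_n\le s<\tau_{n+1}\}$ the residual wait $\tau_{n+1}-s$ is again exponential of rate $\hat q_{\Lambda(s)}$, so the same integral identity gives $\widehat{\mathbb E}[M_{t\wedge\tau_{n+1}}/M_s\mid\mathcal F_s,\mathcal G]=1$; it is worth saying this explicitly. Second, be aware that the paper's stochastic-integral representation of $M_t$ is not wasted effort: it is reused in Step~2 of Theorem~\ref{thm:weaksolutionS} to show $M_tM_t^{(f)}$ is a $\widehat{\mathbb P}$-martingale via integration by parts. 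Your approach proves the present lemma cleanly, but you would still need that representation (or an analogous bare-hands computation) later.
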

\begin{proof}
	Since the proof is rather long, we split the proof into three steps.
	
	\noindent Step 1. Define 
	\[
	g(k,l,\varphi):=
	\begin{cases}
		\frac{q_{kl}(\varphi)}{\hat{q}_{kl}}\mbf 1_{\{\hat{q}_{kl}>0\}}, & \text{if $k\neq l$, $\varphi\in\mcl C_{r}^{2d}$,}\\
		0, & \text{if $k=l$, $\varphi\in\mcl C_{r}^{2d}$.}
	\end{cases}
	\]
	We observe that if \( q_{k l}(\varphi)>0 \) for all \( k \neq l \) and \( \varphi \in \mcl C_{r}^{2d} \), then
	\[
	\begin{aligned}
		\prod_{i=0}^{n(t)-1} \frac{q_{\Lambda\left(\tau_{i}\right) \Lambda\left(\tau_{i+1}\right)}\left(W_{\tau_{i+1}}\right)}{\hat{q}_{\Lambda\left(\tau_{i}\right) \Lambda\left(\tau_{i+1}\right)}}&=\exp \left\{\sum_{i=0}^{n(t)-1} \log \frac{q_{\Lambda\left(\tau_{i}\right) \Lambda\left(\tau_{i+1}\right)}\left(W_{\tau_{i+1}}\right)}{\hat{q}_{\Lambda\left(\tau_{i}\right) \Lambda\left(\tau_{i+1}\right)}}\right\}\\
		&=\exp \left\{\int_{0}^{t} \int_{[0,H]}\log g(\Lambda(s-), \Lambda(s-)+h(\Lambda(s-),u), W_s) N(\mathrm{d} s, \mathrm{d} u)\right\},
	\end{aligned}
	\]
	where \( N(t, A) \) is the Poisson random measure defined by \eqref{eq:bujini}. Then it follows from the definition of \( M \) in \eqref{eq:buyi} that
	\begin{equation}\label{eq:zhishu}
		M_{t}(W_{\cdot}, \Lambda(\cdot))=\exp \{A(t)\}
	\end{equation}
	where
	\[
	A(t):=\int_{0}^{t} \int_{[0,H]}\log g(\Lambda(s-), \Lambda(s-)+h(\Lambda(s-),u), W_s) N(\mathrm{d} s, \mathrm{d} u)-\int_{0}^{t}\left[q_{\Lambda(s)}(W_s)-\hat{q}_{\Lambda(s)}\right] \mathrm{d} s.
	\]
	Now we apply Itô's formula for jump processes (see, e.g., \cite[Chapter2, Theorem 5.1]{ikeda1989}) to the process \( M_{t} \) :
			\begin{align}\label{eq:mduds}
			M_{t}(W_{\cdot}, \Lambda(\cdot))-1\nonumber&=\e^{A(t)}-\e^{A(0)} \\
			&=\int_{0}^{t} \int_{[0,H]} \e^{A(s-)}[g(\Lambda(s-), \Lambda(s-)+h(\Lambda(s-),u), W_s)-1] N(\mathrm{d} s, \mathrm{d} u)\\\nonumber
			&\quad-\int_{0}^{t} \e^{A(s)}\left[q_{\Lambda(s)}(W(s))-\hat{q}_{\Lambda(s)}\right] \mathrm{d} s.
			\end{align}
	Note that
	\[
	\begin{aligned}
		\int_{0}^{t} \e^{A(s)}\left[q_{\Lambda(s)}(W_s)-\hat{q}_{\Lambda(s)}\right] \mathrm{d} s & =\int_{0}^{t} \e^{A(s)} \sum_{l \neq \Lambda(s-)}\left[q_{\Lambda(s-)l}(W_s)-\hat{q}_{\Lambda(s-)l}\right] \mathrm{d} s \\
		& =\int_{0}^{t} \e^{A(s)} \sum_{l \neq \Lambda(s-)}[g(\Lambda(s-), l, W_s)-1] \hat{q}_{\Lambda(s-) l}\mathrm{d} s \\
		& =\int_{0}^{t} \int_{[0,H]} \e^{A(s)}[g(\Lambda(s-), \Lambda(s-)+h(\Lambda(s-),u), W_s)-1]\mathfrak{m}(\mathrm{d} u) \mathrm{d} s.
	\end{aligned}
	\]
	Putting these observations into \eqref{eq:mduds} and using \eqref{eq:zhishu}, we obtain
	\[
	M_{t}(W_{\cdot}, \Lambda(\cdot))-1=\int_{0}^{t} \int_{[0,H]}M_{s-}(W_{\cdot}, \Lambda(\cdot))[g(\Lambda(s-), \Lambda(s-)+h(\Lambda(s-),u), W_s)-1]\widetilde{N}(\d s,\d u) ,
	\]
	where $\widetilde{N}(\d s,\d u)$ is defined by \eqref{eq:nmart} and also a martingale measure with respect to $\widehat{\P}$. In fact, by Theorem \ref{thm:markovsolution}, for each function $f\in \mcl B_{b}(\mbb S)$,
	\begin{equation}
		\begin{aligned}
			N_t^{(f)}=&f(\Lambda(t))-f(\Lambda(0))-\int_{0}^{t}(\widehat{\mcl A}f)(\Lambda(s))\d s\\
			=&f(\Lambda(t))-f(\Lambda(0))-\int_{0}^{t}\widehat{Q}f(\Lambda(s))\d s\\
			=&\sum_{i=0}^{n(t)-1}f(\Lambda(\tau_{i+1}\wedge t))-f(\Lambda(\tau_{i}\wedge t))-\int_{0}^{t}\sum_{l\neq \Lambda(s) } \hat{q}_{\Lambda(s) l}(f(l)-f(\Lambda(s)))\d s\\
			=&\int_{0}^{t}\int_{[0,H]}[f\left(\Lambda(s-)+h(\Lambda(s-),u)\right)-f(\Lambda(s-))]\left(N(\d s,\d u)
			-\mathfrak{m}(\d u)\d s\right)\\
			=&\int_{0}^{t}\int_{[0,H]}[f\left(\Lambda(s-)+h(\Lambda(s-),u)\right)-f(\Lambda(s-))]\widetilde{N}(\d s,\d u)
		\end{aligned}
	\end{equation}
	is an $\{\mcl F_t\}$-martingale with to $\widehat{\mathbb{P}}$. Then, by the proof of \cite[Lemma 2.4]{1985Shiga}, we conclude that $\widetilde{N}$ is a martingale measure with respect to $\widehat{\mathbb{P}}$.
	
	\noindent Step 2. In general, there may exist some \( i \neq j \) and \( \varphi \in \mcl C_{r}^{2d} \) such that \( q_{i j}(\varphi)=0 \). We define 
	$$ 
	q_{k l}^{\varepsilon}(\varphi):=q_{k l}(\varphi)+\frac{\varepsilon}{2^{l}}\text{ and }\hat{q}_{k l}^{\varepsilon}:=\sup_{\varphi \in \mcl C_r^{2d}}q_{kl}^{\varepsilon}(\varphi) +\frac{\varepsilon}{2^{l}}
	$$ 
	for all \( k, l \in \mathbb{S} \) with \( k \neq l \) and \( \varphi \in \mcl C_{r}^{2d} \). Also, we let 
	\[
	q_{k}^{\varepsilon}(\varphi):=\sum_{l \neq{k}} q_{k l}^{\varepsilon}(\varphi)=q_{k}(\varphi)+\left(1-\frac{1}{2^k}\right)\varepsilon \text{ and }\hat{q}_{k}^{\varepsilon}:=\sum_{l \neq{k}} \hat{q}_{k l}^{\varepsilon}=\hat{q}_{k}+\left(1-\frac{1}{2^k}\right)\varepsilon
	\] 
	for all \( k \in \mathbb{S} \) and \( \varphi \in \mcl C_{r}^{2d} \). Then as \( \varepsilon \downarrow 0 \), we have
	\[
	q_{k l}^{\varepsilon}(\varphi) \rightarrow q_{k l}(\varphi) \text { and } {q}_{k}^{\varepsilon}(\varphi) \rightarrow {q}_{k}(\varphi)
	\]
	uniformly with respect to \( \varphi \in \mcl C_{r}^{2d} \) for all \( l \neq k \in \mathbb{S} \). 
	Moreover,
	\[
	\hat{q}_{k l}^{\varepsilon} \rightarrow\hat{q}_{k l}\text { and } \hat{q}_{k}^{\varepsilon}\rightarrow \hat{q}_{k}
	\]
	for all \( l \neq k \in \mathbb{S} \).
	Next we define
	\[
	\begin{aligned}
		&M_{t}^{\varepsilon}(W_{\cdot}, \Lambda(\cdot))\\
		:=	&\exp \left\{\int_{0}^{t} \int_{[0,H]}\log g^{\varepsilon}(\Lambda(s-), \Lambda(s-)+h(\Lambda(s-),u), W_s) N(\mathrm{d} s, \mathrm{d} u)-\int_{0}^{t}\left[q_{\Lambda(s)}^{\varepsilon}(W_s)-\hat{q}_{\Lambda(s)}^{\varepsilon}\right] \mathrm{d} s \right\},
	\end{aligned}
	\]
	where
	\[
	g^{\varepsilon}(k,l,\varphi):=
	\begin{cases}
		\frac{q_{kl}^{\varepsilon}(\varphi)}{\hat{q}_{kl}^{\varepsilon}}\mbf 1_{\{\hat{q}_{kl}^{\varepsilon}>0\}}, & \text{if $k\neq l$, $\varphi\in\mcl C_{r}^{2d}$,}\\
		0, & \text{if $k=l$, $\varphi\in\mcl C_{r}^{2d}$.}
	\end{cases}
	\]
	Thanks to $\textbf{(A4)}$ and the bounded convergence theorem, we have \( M_{t}^{\varepsilon}(W_{\cdot}, \Lambda(\cdot)) \rightarrow \) \( M_{t}(W_{\cdot}, \Lambda(\cdot)) \) as \( \varepsilon \downarrow 0 \). Moreover, by Step 1, we have
	\[
	M_{t}^{\varepsilon}(W_{\cdot}, \Lambda(\cdot))-1=\int_{0}^{t} \int_{[0,H]}M_{s-}^{\varepsilon}(W_{\cdot}, \Lambda(\cdot))[g^{\varepsilon}(\Lambda(s-), \Lambda(s-)+h(\Lambda(s-),u), W_s)-1]\widetilde{N}(\d s,\d u).
	\]
	Now passing to the limit as \( \varepsilon \downarrow 0 \), it follows from the bounded convergence theorem that
	\[
	M_{t}(W_{\cdot}, \Lambda(\cdot))-1=\int_{0}^{t} \int_{[0,H]}M_{s-}(W_{\cdot}, \Lambda(\cdot))[g(\Lambda(s-), \Lambda(s-)+h(\Lambda(s-),u), W_s)-1]\widetilde{N}(\d s,\d u) .
	\]
	Step 3. For each $n \in \mathbb{N}$, let $T_n:=\inf \{t \geq 0:|M(t)|>n\}$. Apparently $M_{\cdot \wedge T_n}$ is a $\widehat{\mathbb{P}}$-martingale. Moreover, 
	$$
	M_{t \wedge T_n}=1+\int_{0}^{t}\int_{[0,H]} \mbf 1_{\left\{s \leq T_n\right\}} M_{s-}[g(\Lambda(s-), \Lambda(s-)+h(\Lambda(s-),u), W_s)-1] \widetilde{N}(\mathrm{d} s, \mathrm{d} u).
	$$
	It is easy to see that $|g(k,l,\varphi)|\leq1$ for all $k, l\in\mbb S$ and $\varphi\in\mcl C_r^{2d}$. Hence it follows that
	$$
	\begin{aligned}
		\widehat{\mathbb{E}}\left[M_{t \wedge T_n}^2\right] & \leq 2+2 \widehat{\mathbb{E}}\left|\int_{0}^{t}\int_{[0,H]} \mbf 1_{\left\{s \leq T_n\right\}} M_{s-}[g(\Lambda(s-), \Lambda(s-)+h(\Lambda(s-),u), W_s)-1] \widetilde{N}(\mathrm{d} s, \mathrm{d} u)\right|^2 \\
		&=2+2 \widehat{\mathbb{E}}\left[\int_{0}^{t}\int_{[0,H]}\mbf 1_{\left\{s \leq T_n\right\}} M_{s-}^2[g(\Lambda(s-),\Lambda(s-)+h(\Lambda(s-),u), W_s)-1]^2 \mathfrak{m}(\d u)\d s\right] \\
		&\leq 2+8 H \int_0^t \widehat{\mathbb{E}}\left[\mathbf{1}_{\left\{s \leq T_n\right\}} M_{s-}^2\right] \mathrm{d} s=2+8 H \int_0^t \widehat{\mathbb{E}}\left[M_{s \wedge T_n}^2\right] \mathrm{d} s.
	\end{aligned}
	$$
	Using Gronwall's inequality, we have 
	\begin{equation}\label{eq:squinte}
		\widehat{\mathbb{E}}\left[M_{t \wedge T_n}^2\right] \leq 2 \e^{8 H t}.
	\end{equation}
	Then it follows that for each $t \geq 0$ fixed, we have 
	$$
	\sup\limits_{n \in \mathbb{N}} \widehat{\mathbb{E}}\left[M_{t \wedge T_n}^2\right] \leq 2 \e^{8 H t}<\infty. 
	$$
	By the Vall\'ee de Poussion theorem (see, for example, \cite[Proposition A.2.2]{Ethier1986}), the sequence $\left\{M_{t \wedge T_n}, n \in \mathbb{N}\right\}$ is uniformly integrable.
	
	We claim that $T_{\infty}=\lim_{n \rightarrow \infty} T_n=\infty$ a.s. $\widehat{\mbb P}$. In fact, on the set $\left\{T_n \leq t\right\}$, we have $M_{t \wedge T_n}^2 \geq n^2$. Therefore we have 
	\[
	\widehat{\mathbb{P}}\left\{T_n \leq t\right\}\leq \frac{\widehat{\mathbb{E}}\left[M_{t \wedge T_n}^2\right]}{n^2}\leq \frac{2 \e^{8 H t}}{n^2}\rightarrow 0, \text { as } n \rightarrow \infty .
	\]
	Hence, by the uniform integrability of the sequence $\left\{M_{t \wedge T_n}, n \in \mathbb{N}\right\}$, we obtain that $M_{\cdot}$ is a $\widehat{\mathbb{P}}$-martingale. In addition, using Fatou's Lemma in \eqref{eq:squinte} gives us $\widehat{\mathbb{E}}[M_t^2] \leq 2 \e^{8 H t}<\infty$. This completes the proof.
\end{proof}

By the martingality of $M_t$ with respect to $\widehat{\mathbb{P}}$, we can construct another probability measure $\mathbb{P}$ on $\Omega=C([0, \infty), \mcl C_r^{2 d}) \times D([0, \infty), \mathbb{S})$ such that $\mathbb{P}$ is a martingale solution to the operator $\mathcal{A}$.

\begin{theorem}\label{thm:weaksolutionS}
	Assume that $\textbf{(A1)-(A4)}$ hold. Then for any given $(\varphi, k) \in \mcl C_r^{2 d} \times \mathbb{S}$, there exists a unique martingale solution $\mathbb{P}^{(\varphi, k)}$ on $\Omega$ for the operator $\mathcal{A}$ starting from $(\varphi, k)$. In other words, there exists a unique weak solution $\mathbb{P}^{(\varphi, k)}$ on $\Omega$ to the system \eqref{eq:zaikan1} and \eqref{eq:zaikan2} with initial data $(\varphi, k)$.
\end{theorem}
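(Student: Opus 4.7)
The plan is to define $\mathbb{P}^{(\varphi,k)}$ via a Radon--Nikodym change of measure from $\widehat{\mathbb{P}}^{(\varphi,k)}$ using the martingale $M_t$ of Lemma \ref{eq:ninini}, and then verify that the resulting measure solves the martingale problem for $\mathcal{A}$. Concretely, I would set $\mathrm{d}\mathbb{P}^{(\varphi,k)}|_{\mathcal{F}_t}:=M_t\,\mathrm{d}\widehat{\mathbb{P}}^{(\varphi,k)}|_{\mathcal{F}_t}$ for every $t\geq 0$. Since $M$ is a positive $\widehat{\mathbb{P}}$-martingale with $\widehat{\mathbb{E}}[M_t]=1$, this family is consistent across $t$, and Tulcea's extension theorem (as already used in the proof of Theorem \ref{thm:markovsolution}) produces a unique probability measure $\mathbb{P}^{(\varphi,k)}$ on $(\Omega,\mathcal{F})$ realising these restrictions. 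Since $M_0=1$ and $\widehat{\mathbb{P}}^{(\varphi,k)}\{(W_0,\Lambda(0))=(\varphi,k)\}=1$, the initial condition transfers to $\mathbb{P}^{(\varphi,k)}$.

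Next I would establish, for every $f\in C_c^2(\mathbb{R}^{2d}\times\mathbb{S})$, that the process $M_t^{(f)}$ defined via $\mathcal{A}$ in \eqref{eq:Mf} is an $\{\mathcal{F}_t\}$-martingale under $\mathbb{P}^{(\varphi,k)}$. By the Bayes rule for conditional expectations under a change of measure, this reduces to showing that the product $M_t^{(f)}M_t$ is an $\{\mathcal{F}_t\}$-martingale under $\widehat{\mathbb{P}}^{(\varphi,k)}$. Under $\widehat{\mathbb{P}}^{(\varphi,k)}$ one has the decomposition
\[
M_t^{(f)}=\widehat{M}_t^{(f)}+\int_0^t (\widehat{Q}-Q)f(W_s,\Lambda(s))\,\mathrm{d}s,
\]
where $\widehat{M}_t^{(f)}$ is the $\widehat{\mathbb{P}}^{(\varphi,k)}$-martingale from \eqref{eq:Mfhat}. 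By Lemma \ref{eq:ninini}, $M_t$ is pure-jump and driven by the compensated Poisson measure $\widetilde{N}$. Applying It\^o's product formula, the quadratic covariation $[M^{(f)},M]_t$ receives contributions only from the simultaneous jumps of $\Lambda$, and its $\widetilde{N}$-compensator equals
\[
\int_0^t M_{s}\sum_{l\neq\Lambda(s)}\bigl[f(W(s),l)-f(W(s),\Lambda(s))\bigr]\bigl[q_{\Lambda(s)l}(W_s)-\hat{q}_{\Lambda(s)l}\bigr]\,\mathrm{d}s=\int_0^t M_s (Q-\widehat{Q})f(W_s,\Lambda(s))\,\mathrm{d}s.
\]
This finite-variation drift exactly cancels the contribution $M_{t-}\cdot(\widehat{Q}-Q)f$ coming from $\mathrm{d}M_t^{(f)}$, so $M_t^{(f)}M_t$ reduces to a sum of stochastic integrals against $B$ and $\widetilde{N}$, i.e.\ a local martingale under $\widehat{\mathbb{P}}^{(\varphi,k)}$. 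The square integrability of $M_t$ from \eqref{eq:squinte}, combined with the boundedness of $f$, $\nabla f$ and $\nabla^2 f$, upgrades this to a true $\widehat{\mathbb{P}}^{(\varphi,k)}$-martingale via a standard localisation argument.

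For uniqueness, suppose $\mathbb{P}_1$ and $\mathbb{P}_2$ are two martingale solutions for $\mathcal{A}$ starting from $(\varphi,k)$. Reversing the Girsanov-type transform with density $M_t^{-1}$, which by the same It\^o computation as above is a positive martingale of unit mean under each $\mathbb{P}_i$, yields probability measures $\widehat{\mathbb{P}}_1,\widehat{\mathbb{P}}_2$ that solve the martingale problem for $\widehat{\mathcal{A}}$ starting from $(\varphi,k)$. Uniqueness of the latter, guaranteed by Theorem \ref{thm:markovsolution}, forces $\widehat{\mathbb{P}}_1=\widehat{\mathbb{P}}_2$, and inverting the change of measure gives $\mathbb{P}_1=\mathbb{P}_2$. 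This argument parallels the treatment of \cite{XI20184277}.

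The main obstacle I anticipate is the It\^o bookkeeping in the second step: correctly computing $[M^{(f)},M]_t$ through the Poisson-random-measure representation of $\Lambda$ and verifying the algebraic identity $\sum_{l\neq k}[f(z,l)-f(z,k)][q_{kl}(\varphi)-\hat{q}_{kl}]=(Q-\widehat{Q})f(z,k)$ so that the covariation drift cancels the $(\widehat{Q}-Q)f$ drift. A secondary technical point is the passage from local to true martingale for $M_t^{(f)}M_t$, for which the square-integrability estimate \eqref{eq:squinte} is essential, and which must be carried through uniformly in the localising stopping times.
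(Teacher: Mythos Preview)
Your proposal is correct and follows essentially the same route as the paper: define $\mathbb{P}^{(\varphi,k)}$ as the Girsanov-type tilt of $\widehat{\mathbb{P}}^{(\varphi,k)}$ by the density $M_t$, then verify the martingale property of $M_t^{(f)}$ under $\mathbb{P}^{(\varphi,k)}$ by showing $M_tM_t^{(f)}$ is a $\widehat{\mathbb{P}}^{(\varphi,k)}$-martingale via It\^o's product formula and the drift cancellation you describe. The paper organises the same computation slightly differently, writing the product formula as
\[
M_tM_t^{(f)}=\int_0^t M_{s-}^{(f)}\,\mathrm{d}M_s+\int_0^t M_{s-}\,\mathrm{d}\widehat{M}_s^{(f)}+\int_0^t M_{s-}\bigl(\mathrm{d}M_s^{(f)}-\mathrm{d}\widehat{M}_s^{(f)}\bigr)+\sum_{s\leq t}\Delta M_s\,\Delta M_s^{(f)},
\]
and then explicitly rewriting the last two terms through the Poisson representation; this is exactly your covariation-plus-drift cancellation in different notation.

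For uniqueness your reverse-Girsanov sketch differs mildly from the paper, which instead invokes \cite{XI20184277} to show that any solution agrees with $\mathbb{P}^{(\varphi,k)}$ on each $\mathcal{F}_{\tau_n}$ and then applies Tulcea together with the bound $\mathbb{P}_n\{\tau_n\leq t\}\to 0$ (obtained from \eqref{eq:shuxi} and \eqref{eq:squinte}). Your approach is viable, but note one point you should make explicit: under an arbitrary martingale solution $\mathbb{P}_i$ one must check $M_t>0$ $\mathbb{P}_i$-a.s.\ before inverting, since $M_t$ contains factors $q_{kl}(W_{\tau})/\hat q_{kl}$ that may vanish; this follows because the $\mathcal{A}$-martingale problem forces jumps with $q_{kl}(W_s)=0$ to have zero $\mathbb{P}_i$-rate, but it deserves a line.
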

\begin{proof}
	Since the proof is rather long, we split the proof into three steps.
	
	\noindent Step 1. For each $t \geq 0$ and each $A \in \mathcal{F}_t$, define
	\begin{equation}\label{eq:RN}
		\mathbb{P}_t^{(\varphi, k)}(A)=\int_A M_t(W_{\cdot}, \Lambda(\cdot)) \mathrm{d} \widehat{\mathbb{P}}^{(\varphi, k)}.
	\end{equation}
	For any $0\leq s<t$ and $A\in\mcl F_s$, we have
	\[
	\mathbb{P}_t^{(\varphi, k)}(A)=\int_A M_t(W_{\cdot}, \Lambda(\cdot)) \mathrm{d} \widehat{\mathbb{P}}^{(\varphi, k)}=\int_A M_s(W_{\cdot}, \Lambda(\cdot)) \mathrm{d} \widehat{\mathbb{P}}^{(\varphi, k)}=\mathbb{P}_s^{(\varphi, k)}(A).
	\]
	Thus, $\left\{\mathbb{P}_t^{(\varphi, k)}\right\}_{t \geq 0}$ is a consistent family of probability measures. Thus by Tulcea's extension theorem (see, e.g., \cite[Theorem 1.3.5]{1997Multidimensional}), there exists a unique probability measure $\mathbb{P}^{(\varphi, k)}$ on $(\Omega, \mathcal{F})$ which coincides with $\mathbb{P}_t^{(\varphi, k)}$ on $\mathcal{F}_t$ for all $t \geq 0$. Moreover, we will prove that the \( \mathbb{P}^{(\varphi, k)} \) is the desired martingale solution for the operator \( \mathcal{A} \) staring from \( (\varphi, k) \). 
	
	If we can show that $M_{t}(W_{\cdot},\Lambda(\cdot))M_{t}^{(f)}(W_{\cdot},\Lambda(\cdot))$ is a $\widehat{\mathbb{P}}^{(\varphi, k)}$-martingale, then for any $0 \leq s<t$ and $A \in \mathcal{F}_s$, we have
	$$
	\begin{aligned}
		\int_A M_t^{(f)}(Z_{\cdot},\Theta(\cdot))\mathrm{d} \mathbb{P}^{(\varphi, k)}
		&=\int_A  M_t^{(f)}(W_{\cdot},\Lambda(\cdot)) M_t(W_{\cdot},\Lambda(\cdot))\mathrm{d} \widehat{\mathbb{P}}^{(\varphi, k)}\\
		&=\int_A  M_s^{(f)}(W_{\cdot},\Lambda(\cdot))M_s(W_{\cdot},\Lambda(\cdot)) \mathrm{d} \widehat{\mathbb{P}}^{(\varphi, k)}\\
		&=\int_A M_s^{(f)}(Z_{\cdot},\Theta(\cdot)) \mathrm{d} \mathbb{P}^{(\varphi, k)},
	\end{aligned}
	$$
	where the first and the third equalities hold true since $\mathbb{P}^{(\varphi, k)}$ coincides with the probability measure $\mathbb{P}_t^{(\varphi, k)}$ given in \eqref{eq:RN}, and the second equality follows from the martingale property of $\left(M_{t}(W_{\cdot},\Lambda(\cdot))M_{t}^{(f)}(W_{\cdot},\Lambda(\cdot)), \mathcal{F}_t, \widehat{\mathbb{P}}^{(\varphi, k)}\right)$. This shows that $\mathbb{P}^{(\varphi, k)}$ is a martingale solution for the operator $\mathcal{A}$ starting from $(\varphi, k)$.
	
	\noindent Step 2. In what follows, as in the proof of \cite[Theorem 3.6]{XI20184277}, we check that for each function \( f \in C_{c}^{2}(\mathbb{R}^{2d} \times \mathbb{S})\), \(\left(M_{t}(W_{\cdot},\Lambda(\cdot)) M_{t}^{(f)}(W_{\cdot},\Lambda(\cdot)), \mathcal{F}_{t}, \widehat{\mathbb{P}}\right) \) is a martingale. Simply write $M_t=M_{t}(W_{\cdot},\Lambda(\cdot))$ and $M_{t}^{(f)}=M_{t}^{(f)}(W_{\cdot},\Lambda(\cdot))$. Using integration by parts, we derive that
	\begin{equation}\label{eq:kaixin}
		\begin{aligned}
			M_{t} M_{t}^{(f)}= & \int_{0}^{t} M_{s-}^{(f)} \mathrm{d} M_{s}+\int_{0}^{t} M_{s-} \mathrm{d} \widehat{M}_{s}^{(f)} 
			+\int_{0}^{t} M_{s-}\left(\mathrm{d} M_{s}^{(f)}-\mathrm{d} \widehat{M}_{s}^{(f)}\right)\\&+\sum_{s \leq t}\left(M_{s}-M_{s-}\right)\left(M_{s}^{(f)}-M_{s-}^{(f)}\right),
		\end{aligned}
	\end{equation}
	where \( \widehat{M}_{t}^{(f)} \) is defined in \eqref{eq:Mfhat}. Using \eqref{eq:Mf} and \eqref{eq:buyi}, we can compute
	\begin{align}
		\nonumber\sum_{s \leq t}&  \left(M_{s}-M_{s-}\right)\left(M_{s}^{(f)}-M_{s-}^{(f)}\right) \\\nonumber
		&=\sum_{s \leq t}\left(M_{s}-M_{s-}\right)[f(W(s), \Lambda(s))-f(W(s), \Lambda(s-))] \\\nonumber
		& =\int_{0}^{t} \int_{[0,H]} M_{s-}\left(\frac{M_{s}}{M_{s-}}-1\right)[f(W(s), \Lambda(s-)+h(\Lambda(s-),u))-f(W(s), \Lambda(s-))] N(\mathrm{d} s, \mathrm{d} u) \\\nonumber
		& =\int_{0}^{t} \int_{[0,H]} M_{s-}[g(\Lambda(s-), \Lambda(s-)+h(\Lambda(s-),u), W_s)-1]\\\nonumber
		&\quad\times[f(W(s), \Lambda(s-)+h(\Lambda(s-),u))-f(W(s), \Lambda(s-))] N(\mathrm{d} s, \mathrm{d} u).\nonumber
	\end{align}
	On the other hand,
	\begin{align}
		\int_{0}^{t}\nonumber& M_{s-}\left(\mathrm{d} M_{s}^{(f)}-\mathrm{d} \widehat{M}_{s}^{(f)}\right) \\\nonumber
		&= -\int_{0}^{t} M_{s-} \sum_{l \in \S}\left[g(\Lambda(s-),l,W(s))-1\right][f(W(s), l)-f(W(s), \Lambda(s-))]\hat{q}_{\Lambda(s-) l} \mathrm{d} s \\\nonumber
		&= -\int_{0}^{t}\int_{[0,H]} M_{s-}[g(\Lambda(s-), \Lambda(s-)+h(\Lambda(s-),u), W_s)-1]\\\nonumber
		&\quad\times[f(W(s), \Lambda(s-)+h(\Lambda(s-),u))-f(W(s), \Lambda(s-))]\mathfrak{m}(\d u)\d s.\nonumber
	\end{align}
	Then upon plugging the above equation into \eqref{eq:kaixin}, it follows that
	$$
	\begin{aligned}
		M_t M_t^{(f)}= & \int_0^t M_{s-}^{(f)} \mathrm{d} M_s+\int_0^t M_{s-} \mathrm{d} \widehat{M}_s^{(f)}+\int_{0}^{t}\int_{[0,H]} M_{s-}[g(\Lambda(s-), \Lambda(s-)+h(\Lambda(s-),u), W_s)-1] \\
		& \times[f(W(s), \Lambda(s-)+h(\Lambda(s-),u))-f(W(s), \Lambda(s-))] \widetilde{N}(\mathrm{d} s, \mathrm{d} u).
	\end{aligned}
	$$
	Recall that we have shown respectively in Theorem \ref{thm:markovsolution} and Lemma \ref{eq:ninini} that  $\widehat{M}^{(f)}$  and $ M_t$ are martingales
	under the measure $\widehat{\mathbb{P}}^{(\varphi, k)}$. And also note that $\widetilde{N}$ is a martingale measure with respect to $\widehat{\mathbb{P}}^{(\varphi, k)}$. Thus, we conclude immediately that $M_{t} M_{t}^{(f)}$ is a martingale under $\widehat{\mathbb{P}}^{(\varphi, k)}$.
	
	\noindent Step 3. For the proof of uniqueness, we can use the same arguments as those in the proofs of \cite[Theorem 3.6]{XI20184277} to show that any martingale solution $\widetilde{\mathbb{P}}$ for the operator $\mathcal{A}$ starting from $(\varphi, k)$ must agree with $\mathbb{P}^{(\varphi, k)}$ on $\mathcal{F}_{\tau_n}, n \in \mathbb{N}$. 
	
	Then, define a family of probability measures $\mathbb{P}_n$ on $(\Omega, \mathcal{F})$ via $\mathbb{P}_n(A):=\mathbb{P}^{(\varphi, k)}(A)$ for $A \in \mathcal{F}_{\tau_n}$. Apparently we have $\mathbb{P}_{n+1}=\mathbb{P}_n$ on $\mathcal{F}_{\tau_n}$. By \eqref{eq:shuxi} and \eqref{eq:squinte}, we obtain that for any $t \geq 0$,
	$$
	\mathbb{P}_n\left\{\tau_n \leq t\right\}=\mathbb{P}_t^{(\varphi, k)}\left\{\tau_n \leq t\right\}=\int_{\Omega} 1_{\left\{\tau_n \leq t\right\}} M_t \mathrm{d} \widehat{\mathbb{P}}^{(\varphi, k)}\leq\left(\widehat{\mathbb{P}}^{(\varphi, k)}\left\{\tau_n \leq t\right\}\right)^2\left(\mathbb{E}^{\widehat{\mathbb{P}}^{(\varphi, k)}}\left[M_t^2\right]\right)^2 \rightarrow 0,
	$$
	as $n\rightarrow\infty$. Then in view of the Tulcea's extension theorem, the desired uniqueness follows. This completes the proof.
\end{proof}

\section{Feller Property}
Up to now, we have proved that the martingale problem for the operator $\mcl A$ defined in \eqref{eq:operator} is well-posed under $\textbf{(A1)-(A4)}$. Consequently, for any $(\varphi, k)$, there exists a unique probability measure $\mbb P^{(\varphi,k)}$ on $\Omega$ under which the coordinate process $(Z_t,\Theta(t))$ satisfies $P^{(\varphi,k)}\{(Z_0,\Theta(0))=(\varphi,k)\}=1$, and for any $f\in C_c^{\infty}(\mbb R^{2d}\times\mbb S)$, the process $M_t^{(f)}$ defined in \eqref{eq:Mf} is an $\{\mcl F_t\}$-martingale. In this section, we shall prove that  the process $(Z_t, \Theta(t))$ possesses the Feller property on the probability space $(\Omega, \mcl F, \mbb P^{(\varphi,k)})$  and $\{\mcl F_t\}_{t\geq 0}$ is the  natural filtration  generated by $Z_t.$ To this end, we impose the following assumptions:
\begin{enumerate} 
	\item [\textbf{(H1)}] $b_1(\varphi, k)$ is bounded with respect to $(\varphi, k)$, and there exists a positive constant $L_1>0$ such that
	{\begin{equation}
			|b_1(\varphi,k)-b_1(\psi,k)|\leq L_1\|\varphi-\psi\|_r, \quad \varphi,\psi\in\mcl C_r^{2d}, k\in\S.
	\end{equation}}
    \item [\textbf{(H2)}] For every $k\in\mathbb S$, $\sigma(\cdot, k)\in C_b^3(\mathbb R^d; \mathbb R^{d\times d})$. Let $a(x, k):=\sigma(x,k)\sigma(x,k)^T$ and satisfies
    $$\sup\limits_{x\in\mathbb R^d,k\in\mathbb S}\|a(x,k)^{-1}\|_{\rm{HS}}<\infty.$$
\end{enumerate}

 According to Theorem \ref{thm:markovsolution}, the martingale problem for the operator $\widehat{\mcl A}$ defined in \eqref{eq:operatorhat} is well-posed. Therefore, there exists a probability measure $\widehat{\mbb P}$ under which the coordinate process $W(t):=(U(t), V(t))$ satisfies
\begin{equation}\label{eq:leilei}
	\left\{
	\begin{aligned}
		\text{d}U(t)&=\left(aU(t)+bV(t)\right)\text{d}t,\\
		\text{d}V(t)&=\left[ b_1(U_t,V_t,\Lambda(t))+b_2(U(t),V(t),\Lambda(t))\right]\text{d}t+\sigma(U(t),V(t),\Lambda(t))\text{d}B(t),
	\end{aligned}
	\right.
\end{equation}
and the component $\Lambda(t)$ is a time-homogeneous Markov chain with state space $\mbb S$ satisfying
\begin{equation}
	\mathbb{P}\{\Lambda(t+\Delta)=l \mid \Lambda(t)=k\}=\left\{\begin{array}{ll}
		\hat{q}_{k l} \Delta+o(\Delta), & \text { if } l \neq k, \\
		1+\hat{q}_{k k} \Delta+o(\Delta), & \text { if } l=k,
	\end{array}\right.
\end{equation}
provided $\Delta\downarrow0$, where $\widehat{Q}=(\hat{q}_{kl})$ is a conservation $Q$-matrix defined in \eqref{eq:qhat}. To proceed, we first give some necessary estimates.

\begin{lemma}\label{strlem:basic}
	Suppose that Assumptions $\textbf{(A2)-(A4)}$ and $\textbf{(H1)-(H2)}$ hold. Then, for all $T$, $\delta>0$, $\varphi, \psi\in\mcl C_r^{2d}$ and $k\in \mbb S$, we have
	\begin{equation}
		\mbb P\left\{\sup\limits_{0\leq t\leq T}\|W_t^{(\varphi,k)}-W_t^{(\psi,k)}\|_r\geq\delta\right\}\rightarrow0, \quad \text{as  $\|\varphi-\psi\|_r\rightarrow0$},
	\end{equation}
	where $W_t^{(\varphi, k)}=(U_t^{(\varphi, k)}, V_t^{(\varphi, k)})$.
\end{lemma}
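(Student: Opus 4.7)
The plan proceeds in three stages: a reduction to a pointwise-supremum estimate, a Zvonkin-type transformation that absorbs the H\"older drift $b_2$, and a Gronwall argument adapted to the infinite memory.

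\textbf{Stage 1 (reduction).} I would first split the segment norm into history and future contributions. For $\theta<-t$ both paths agree with their initial data, so with the substitution $s=t+\theta$,
\begin{equation*}
	\sup_{\theta<-t}\e^{r\theta}|\varphi(t+\theta)-\psi(t+\theta)|=\e^{-rt}\sup_{s<0}\e^{rs}|\varphi(s)-\psi(s)|\leq \e^{-rt}\|\varphi-\psi\|_r,
\end{equation*}
which vanishes uniformly in $t\in[0,T]$. For $\theta\in[-t,0]$ the weight $\e^{r\theta}\leq 1$ is harmless, hence
\begin{equation*}
	\|W_t^{(\varphi,k)}-W_t^{(\psi,k)}\|_r\leq \e^{-rt}\|\varphi-\psi\|_r+\sup_{0\leq s\leq t}|W(s)^{(\varphi,k)}-W(s)^{(\psi,k)}|,
\end{equation*}
and it suffices to show that the pointwise running supremum on $[0,T]$ tends to $0$ in $\mbb P$-probability.

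\textbf{Stage 2 (Zvonkin transformation via \eqref{eq:elliptic}).} Since the rates $\widehat Q$ of $\Lambda$ do not depend on the spatial variable, both $W^{(\varphi,k)}$ and $W^{(\psi,k)}$ can be coupled to share the same Brownian motion $B$ and the same switching trajectory $\Lambda(t)$. To compensate for the merely H\"older regularity of $b_2$, I would invoke the vector-valued elliptic equation \eqref{eq:elliptic}: for each $k\in\mbb S$, let $u_k$ be its solution and set $\Phi_k(x,y):=(x,\,y+u_k(x,y))$. Choosing the resolvent parameter large enough, and using anisotropic Kolmogorov-type Schauder estimates (necessary because $\mbb L_k$ is degenerate in the $x$-direction), yields $\|\nabla u_k\|_\infty\leq 1/2$ uniformly in $k$, so that $\Phi_k$ is a bi-Lipschitz $C^1$-diffeomorphism of $\mbb R^{2d}$. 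Applying It\^o's formula together with the PDE satisfied by $u_k$, the drift $b_2$ is absorbed, and the transformed process $\widetilde W^{(\varphi,k)}(t):=\Phi_{\Lambda(t)}(W^{(\varphi,k)}(t))$ satisfies an SDE whose coefficients are globally Lipschitz in the spatial variables on each interval between consecutive switching times of $\Lambda$.

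\textbf{Stage 3 (Gronwall with memory).} Put $\phi(t):=\mbb E\sup_{0\leq s\leq t}|\widetilde W^{(\varphi,k)}(s)-\widetilde W^{(\psi,k)}(s)|^2$. Applying It\^o's formula to the squared difference, the Burkholder-Davis-Gundy inequality, the Lipschitz assumption \textbf{(H1)} on $b_1$, and Stage 1's decomposition of $\|W_s^{(\varphi,k)}-W_s^{(\psi,k)}\|_r$, one obtains
\begin{equation*}
	\phi(t)\leq C|\varphi(0)-\psi(0)|^2+C\int_0^t\bigl(\e^{-rs}\|\varphi-\psi\|_r^2+\phi(s)\bigr)\,\d s.
\end{equation*}
Gronwall's inequality then yields $\phi(T)\leq C_T\|\varphi-\psi\|_r^2$, and inverting through the bi-Lipschitz $\Phi_k^{-1}$ transfers the same $L^2$ bound to the original processes; Chebyshev's inequality converts this into the required convergence in probability.

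\textbf{Main obstacle.} The delicate point is Stage 2: because $\mbb L_k$ is degenerate (only the $y$-direction carries noise), classical elliptic Schauder theory does not apply, and one must rely on anisotropic Kolmogorov-type estimates both to guarantee that \eqref{eq:elliptic} admits a solution smooth enough for $\Phi_k$ to be a bi-Lipschitz diffeomorphism and to legitimise the $b_2$-absorption identity pointwise. A secondary bookkeeping issue is that $\Phi_k$ depends on $k$, so $\widetilde W$ experiences controlled jumps at the switching times of $\Lambda$; these must be estimated uniformly in $k\in\mbb S$ for the Gronwall step in Stage 3 to close.
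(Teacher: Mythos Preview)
Your three-stage plan coincides with the paper's proof: the same segment-norm reduction, the same Zvonkin-type transform $g_\lambda(z,i)=y+f_\lambda(z,i)$ based on \eqref{eq:elliptic}, and the same Gronwall closure followed by inversion of the bi-Lipschitz map.

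One clarification resolves what you flag as the ``main obstacle''. The operator $\mathcal L^i$ appearing in \eqref{eq:elliptic} is \emph{not} the full degenerate generator $\mathbb L_k$: it contains only the $y$-diffusion and the $b_2$-drift in the $y$-direction, with no transport term $\langle ax+by,\nabla_x\rangle$. Thus \eqref{eq:elliptic} is, for each fixed $x$, a uniformly elliptic equation in $y$, and the estimate \eqref{eq:supbounded} follows from the standard Schauder theory cited (Flandoli--Gubinelli--Priola), not from anisotropic Kolmogorov estimates. The price is that when you apply It\^o's formula the transport contribution $\langle\nabla_x f_\lambda,\,aU+bV\rangle$ survives in the transformed drift (see \eqref{eq:tildeV}); this term is Lipschitz thanks to the bound on $\nabla^2_z f_\lambda$ and causes no trouble in Stage~3. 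So your worry about degenerate Schauder theory is unnecessary here.

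A minor organisational difference: the paper does not run a BDG/sup-$L^2$ argument. It establishes only the pointwise bound $\mathbb E\|W_t^{(\varphi,k)}-W_t^{(\psi,k)}\|_r\le C\|\varphi-\psi\|_r$ (equation \eqref{eq:mudi}) and then refers to the computations in \cite{xi2009} to upgrade this to convergence in probability of the running supremum. Your Stage~3 is a self-contained and slightly stronger variant of the same idea.
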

\begin{proof}
	It is easy to see that
	\begin{equation}\label{eq:rnorm}
		\begin{aligned}
			\sup\limits_{0\leq t\leq T}\|W_t^{(\varphi,k)}-W_t^{(\psi,k)}\|_r&=\sup\limits_{0\leq t\leq T}\sup\limits_{-\infty<\theta\leq0}\e^{r\theta}\left|W^{(\varphi,k)}(t+\theta)-W^{(\psi,k)}(t+\theta)\right|\\
			&\leq\sup\limits_{0\leq t\leq T}\sup\limits_{-\infty<u\leq t}\e^{r(u-t)}\sqrt{\left|U^{(\varphi,k)}(u)-U^{(\psi,k)}(u)\right|^2+\left|V^{(\varphi,k)}(u)-V^{(\psi,k)}(u)\right|^2}\\
			&\leq\|\varphi-\psi\|_r+\sup\limits_{0\leq t\leq T}\left(\left|U^{(\varphi,k)}(t)-U^{(\psi,k)}(t)\right|+\left|V^{(\varphi,k)}(t)-V^{(\psi,k)}(t)\right|\right)\\
			&=:\|\varphi-\psi\|_r+\Gamma(T).
		\end{aligned}
	\end{equation}
	Now we directly compute $\Gamma(T)$ as follows:
	\[
	\begin{aligned}
		\Gamma(T)&=\sup\limits_{0\leq t\leq T}\left(\left|U^{(\varphi,k)}(t)-U^{(\psi,k)}(t)\right|+\left|V^{(\varphi,k)}(t)-V^{(\psi,k)}(t)\right|\right)\\
		&\leq 2\|\varphi-\psi\|_r+\int_0^T\left(a|U^{(\varphi,k)}(s)-U^{(\psi,k)}(s)|+|b||V^{(\varphi,k)}(s)-V^{(\psi,k)}(s)|\right)\d s\\
		&\quad+\int_0^T\left|b_1(W^{(\varphi,k)}_s,\Lambda^{(k)}(s))-b_1(W^{(\psi,k)}_s,\Lambda^{(k)}(s))\right|\d s\\
		&\quad+\int_0^T\left|b_2(W^{(\varphi,k)}(s),\Lambda^{(k)}(s))-b_2(W^{(\psi,k)}(s),\Lambda^{(k)}(s))\right|\d s\\
		&\quad+\sup\limits_{0\leq t\leq T}\left|\int_0^t\left(\sigma(W^{(\varphi,k)}(s),\Lambda^{(k)}(s))-\sigma(W^{(\psi,k)}(s),\Lambda^{(k)}(s))\right)\d B(s)\right|.
	\end{aligned}
	\]
	From the Markov  inequality and\textbf{ (A2)}, we have that for any given $\varepsilon>0$,
		\begin{align}
			\mathbb{P}\nonumber&\left\{\int_0^T\left|b_2(W^{(\varphi,k)}(s),\Lambda^{(k)}(s))-b_2(W^{(\psi,k)}(s),\Lambda^{(k)}(s))\right|\d s\geq \varepsilon \right\}\\\nonumber
			&\leq \frac{1}{\varepsilon}\mathbb{E}\int_0^T\left|b_2(W^{(\varphi,k)}(s), \Lambda^{(k)}(s))-b_2(W^{(\psi,k)}(s),\Lambda^{(k)}(s))\right|\d s\\\nonumber
			&\leq \frac{L_2}{\varepsilon}\mathbb{E}\int_0^T|W^{(\varphi,k)}(s)-W^{(\psi,k)}(s)|^{\alpha}\d s\\\nonumber
			&\leq \frac{L_2}{\varepsilon}\int_0^T\mathbb{E}\|W^{(\varphi,k)}_s-W^{(\psi,k)}_s\|_r^{\alpha}\d s\\\nonumber
			&\leq \frac{L_2}{\varepsilon}\int_0^T(\mathbb{E}\|W^{(\varphi,k)}_s-W^{(\psi,k)}_s\|_r)^{\alpha}\d s.
			\end{align}
	Hence, using the similar argument in \cite{xi2009}, we only need to declare that there exists a positive constant $C$ such that for any $\varphi,\ \psi\in\mcl C_r^{2d},\ k\in\mbb S$, we have 
	\begin{equation}\label{eq:mudi}
		\mbb E\|W_t^{(\varphi,k)}-W_t^{(\psi,k)}\|_r\leq C\|\varphi-\psi\|_r.
	\end{equation}
	
	On account of the impact of the H\"older continuity of $b_2$, we shall apply the transformation introduced by Flandoli-Gubinelli-Priola in \cite{Flandoli2010} to remove it. Let  $z=(x,y)$. For each $i\in\mbb S$ and $\lambda>0$, consider the vector valued elliptic equation
	\begin{equation}\label{eq:elliptic}
		\lambda u(z)-\mcl L^i u(z)=b_2(z,i),
	\end{equation}
	where 
	\[
	\mcl L^i u(z)=\frac{1}{2}{\rm tr}\left({\nabla^2_{y}}u(z)\sigma(z,i)\sigma(z,i)^T\right)+\langle\nabla_{y}u(z), b_2(z,i)\rangle,
	\]
	for any $u\in C^2(\mbb R^{2d}, \mbb R^{d})$. According to $\textbf{(H2)}$  and Theorem 5 in \cite{Flandoli2010}, the solution of equation \eqref{eq:elliptic} gives us a vector valued function $f_{\lambda}(\cdot,i):\mbb R^{2d}\rightarrow\mbb R^{d}$ and satisfies
	\begin{equation}\label{eq:supbounded}
		\sup\limits_{z\in\mbb R^{2d}}\left(|\nabla_zf_{\lambda}(z,i)|+\|\nabla^2_{z}f_{\lambda}(z,i)\|_{\text{ HS}}\right)<\frac{1}{2},
	\end{equation}
	for $\lambda$ large enough, where $\nabla_z $  and  \( \nabla_z^{2} \)   represents the gradient  and  the Hessian matrix of $f$ with respect to the variable $z$ respectively. Thus, the map
	\[
	g_{\lambda}(z,i)=y+f_{\lambda}(z,i)
	\]
	is a $C^2$-diffeomorphism of $\mbb R^{2d}$ with respect to $z$.  Let 
	\[
	\left\{
	\begin{aligned}
		\widetilde{U}(t)&=U(t),\\
		\widetilde{V}(t)&=g_{\lambda}(U(t),V(t),\Lambda(t))=V(t)+f_{\lambda}(U(t),V(t),\Lambda(t)),
	\end{aligned}
	\right.
	\]
	for $t\geq0$. Applying It\^o's formula and \eqref{eq:elliptic}, we obtain that
	\begin{equation}\label{eq:tildeU}
		\text{d}\widetilde{U}(t)=(aU(t)+bV(t))\d t,
	\end{equation}
	and
	\begin{equation}\label{eq:tildeV}
		\begin{aligned}
			\text{d}\widetilde{V}(t)=&\Big[\lambda f_{\lambda}(U(t), V(t),{\Lambda(t)})+b_1(U_t, V_t, \Lambda(t))+\langle\nabla_{x}f_{\lambda}(U(t), V(t),{\Lambda(t)}), aU(t)+bV(t)\rangle\\
			&+\langle\nabla_{y}f_{\lambda}(U(t), V(t),{\Lambda(t)}), b_1(U_t, V_t, \Lambda(t))\rangle\Big]\d t+\sum\limits_{j\in\mbb S}\hat{q}_{\Lambda(t)j}g_{\lambda}(U(t), V(t), j)\d t\\
			&+\langle\nabla_{y}g_{\lambda}(U(t), V(t),{\Lambda(t)}), \sigma(U(t), V(t), \Lambda(t))\d B(t)\rangle\\
			&+\int_{[0,H]}\left(g_{\lambda}(U(t), V(t), \Lambda(t-)+h(\Lambda(t-),z))-g_{\lambda}(U(t), V(t), \Lambda(t-)\right)\widetilde{N}(\d t, \d z).
		\end{aligned}
	\end{equation}
	Thus, by \eqref{eq:supbounded}, $\textbf{(H1)}$ and $\textbf{(H2)}$, the coefficients in \eqref{eq:tildeU}-\eqref{eq:tildeV} are Lipschitz continuous. In fact, for any ${\varphi},{\psi}\in\mcl C_r^{2d},\ k\in\mbb S$,  we have
	\begin{align*}
		|\langle\nabla&_{y}f_{\lambda}(\varphi(0),k), b_1(\varphi, k)\rangle-\langle\nabla_{y}f_{\lambda}(\psi(0),k), b_1(\psi, k)\rangle|\\
		&\leq |\langle\nabla_{y}f_{\lambda}(\varphi(0),k), b_1(\varphi, k)-b_1(\psi, k)\rangle|+|\langle\nabla_{y}f_{\lambda}(\varphi(0),k)-\nabla_{y}f_{\lambda}(\psi(0),k), b_1(\psi, k)\rangle|\\
		&\leq \frac{1}{2}| b_1(\varphi, k)-b_1(\psi, k)|+{ C}|\nabla_{y}f_{\lambda}(\varphi(0),k)-\nabla_{y}f_{\lambda}(\psi(0),k)|\\
		&\leq C\|\varphi-\psi\|_r,
	\end{align*}
	where $C$ is a positive constant. And the other terms can be treated similarly. Therefore, by a standard argument one can show that for any $\widetilde{\varphi},\widetilde{\psi}\in\mcl C_r^{2d},\ k\in\mbb S$ , there exists a positive constant  $\widetilde C$ such that
	\[
	\mbb E|\widetilde{W}^{(\widetilde{\varphi},k)}(t)-\widetilde{W}^{(\widetilde{\psi},k)}(t)|^2\leq \widetilde{C}\|\widetilde{\varphi}-\widetilde{\psi}\|_r^2,
	\]
	where $\widetilde{W}^{(\widetilde{\varphi},k)}(t)=(\widetilde{U}^{(\widetilde{\varphi},k)}(t),\widetilde{V}^{(\widetilde{\varphi},k)}(t))$.  For ${\varphi},{\psi}\in\mcl C_r^{2d}$ and $k\in\mbb S$, define $\widetilde{\varphi}(\theta):=(\varphi_1(\theta),g_{\lambda}(\varphi(\theta),k))$ and   $\widetilde{\psi}(\theta):=(\psi_1(\theta),g_{\lambda}(\psi(\theta),k))$ for any $\theta\in(\infty,0 ]$.  Hence, for some $C>0$,
	\begin{equation}\label{eq:a1ll}
		\begin{aligned}
			\|\widetilde{\varphi}-\widetilde{\psi}\|_r^2&= \sup\limits_{-\infty<\theta\leq0}\e^{2r\theta}(|\varphi_1(\theta)-\psi_1(\theta)|^2+|g_{\lambda}(\varphi(\theta),k)-g_{\lambda}(\psi(\theta),k)|^2)
			\leq C\|\varphi-\psi\|_r^2.
		\end{aligned}
	\end{equation}
	The definition of $\widetilde{W}(t)$ yields that
	\begin{align}\label{eq:jieshu}
		\mbb E\nonumber&|\widetilde{W}^{(\widetilde{\varphi},k)}(t)-\widetilde{W}^{(\widetilde{\psi},k)}(t)|^2\\\nonumber
		&= \mbb E\big[|U^{(\varphi,k)}(t)-U^{(\psi,k)}(t)|^2+|V^{(\varphi,k)}(t)-V^{(\psi,k)}(t)|^2+|f_{\lambda}(W^{(\varphi,k)}(t),{\Lambda^{(k)}(t)})-f_{\lambda}(W^{(\psi,k)}(t),{\Lambda^{(k)}(t)})|^2\\\nonumber
		&\quad+2\langle V^{(\varphi,k)}(t)-V^{(\psi,k)}(t),f_{\lambda}(W^{(\varphi,k)}(t),{\Lambda^{(k)}(t)})-f_{\lambda}(W^{(\psi,k)}(t),{\Lambda^{(k)}(t)})\rangle\big]\\\nonumber
		&\geq \mbb E\big[|W^{(\varphi,k)}(t)-W^{(\psi,k)}(t)|^2+|f_{\lambda}(W^{(\varphi,k)}(t),{\Lambda^{(k)}(t)})-f_{\lambda}(W^{(\psi,k)}(t),{\Lambda^{(k)}(t)})|^2\\
		&\quad-2|V^{(\varphi,k)}(t)-V^{(\psi,k)}(t)||f_{\lambda}(W^{(\varphi,k)}(t),{\Lambda^{(k)}(t)})-f_{\lambda}(W^{(\psi,k)}(t),{\Lambda^{(k)}(t)})|\big]\\\nonumber
		&\geq \mbb E\big[|W^{(\varphi,k)}(t)-W^{(\psi,k)}(t)|^2+|f_{\lambda}(W^{(\varphi,k)}(t),{\Lambda^{(k)}(t)})-f_{\lambda}(W^{(\psi,k)}(t),{\Lambda^{(k)}(t)})|^2\\\nonumber
		&\quad-2|W^{(\varphi,k)}(t)-W^{(\psi,k)}(t)||f_{\lambda}(W^{(\varphi,k)}(t),{\Lambda^{(k)}(t)})-f_{\lambda}(W^{(\psi,k)}(t),{\Lambda^{(k)}(t)})|\big]\\\nonumber
		&=\mbb E\left[|W^{(\varphi,k)}(t)-W^{(\psi,k)}(t)|-|f_{\lambda}(W^{(\varphi,k)}(t),{\Lambda^{(k)}(t)})-f_{\lambda}(W^{(\psi,k)}(t),{\Lambda^{(k)}(t)})|\right]^2.\nonumber
	\end{align}
	It follows from \eqref{eq:supbounded} that 
	\begin{equation}\label{eq:xiawu}
		|f_{\lambda}(W^{(\varphi,k)}(t),{\Lambda^{(k)}(t)})-f_{\lambda}(W^{(\psi,k)}(t),{\Lambda^{(k)}(t)})|\leq \frac{1}{2}|W^{(\varphi,k)}(t)-W^{(\psi,k)}(t)|
	\end{equation}
	for $\lambda$ large enough. Inserting \eqref{eq:xiawu} into \eqref{eq:jieshu}, we derive that
	\begin{equation*}
		\mbb E|\widetilde{W}^{(\widetilde{\varphi},k)}(t)-\widetilde{W}^{(\widetilde{\psi},k)}(t)|^2\geq \frac{1}{4}\mbb E|W^{(\varphi,k)}(t)-W^{(\psi,k)}(t)|^2.
	\end{equation*}
	Consequently,  analogous to the calculation of \eqref{eq:rnorm}, we obtain
	\[
	\begin{aligned}
		\mbb E\|W_t^{(\varphi,k)}-W_t^{(\psi,k)}\|_r&\leq\|\varphi-\psi\|_r+\sup\limits_{0<u\leq t}\mbb E|W^{(\varphi,k)}(u)-W^{(\psi,k)}(u)|\\
		&\leq\|\varphi-\psi\|_r+\sup\limits_{0<u\leq t}\left(\mbb E|W^{(\varphi,k)}(u)-W^{(\psi,k)}(u)|^2\right)^{\frac{1}{2}}\\
		&\leq \|\varphi-\psi\|_r+2\sup\limits_{0<u\leq t}\left(\mbb E|\widetilde{W}^{(\widetilde{\varphi},k)}(t)-\widetilde{W}^{(\widetilde{\psi},k)}(t)|^2\right)^{\frac{1}{2}}\\
		&\leq C\|\varphi-\psi\|_r,
	\end{aligned}
	\]
	where \eqref{eq:a1ll} has been used in the last inequality and  $C$	is a positive constant. This is the desired result  \eqref{eq:mudi}. The proof is complete.	\end{proof}
\begin{remark}
	By the argument of the above theorem, we can obtain that $(W_t, \Lambda(t))$ admits the Feller property. Indeed, denote by $\{P(t,(\varphi,k),A):t\geq0,(\varphi,k)\in \mcl C_r^{2d}\times \mbb S,A\in\mathcal B(\mcl C_r^{2d}\times \mbb S)\}$ the transition probability family of the process $(W_t,\Lambda(t))$. Since $\mbb S$ has a discrete topology, we only need to prove that for each $0\leq t\leq T$, $\varphi$, $\psi\in \mcl C_r^{2d} $ and $k\in \mbb S$, $P(t,(\varphi,k),\cdot)$ converges weakly to $P(t,(\psi,k),\cdot)$ as $\|\varphi-\psi\|_r\rightarrow0$. By virtue of Theorem 5.6 in  \cite{chen2004markov}, it suffices to prove that
	\begin{equation*}
		W_1(P(t,(\varphi,k),\cdot),P(t,(\psi,k),\cdot))\rightarrow0\quad {\rm as} \quad \|\varphi-\psi\|_r \rightarrow 0,
	\end{equation*}
	where $W_1(\cdot,\cdot)$ denotes the $L^1$-Wasserstein metric between two probability measures. 
	We construct the coupling process $\left(W^{\prime}(t), \Lambda^{\prime}(t)\right)$ of $\left(W(t), \Lambda(t)\right)$. Firstly, we construct the basic coupling $(\Lambda(t),\Lambda^{\prime}(t))$ of the discrete component $\Lambda(t)$ as \cite[Example 0.20]{chen2004markov}. Then, we rewrite \eqref{eq:leilei} as 
	\begin{equation*}
		\d W(t)=D(W_t,\Lambda(t))\d t+F(W(t),\Lambda(t))\d \hat{B}(t),
	\end{equation*}
	where 
	\begin{equation*}
		D(\varphi,k)=\left(
		\begin{array}{c}
			a\varphi_1(0)+b\varphi_2(0) \\
			b_1\left(\varphi, k\right)+b_2(\varphi(0),k)
		\end{array}\right), \quad
		F(\varphi(0), k)=\left(\begin{array}{cc}
			0 & 0 \\
			0 & \sigma(\varphi(0), k)
		\end{array}\right),
	\end{equation*}
	for any $\varphi=(\varphi_1,\varphi_2)\in \mcl C_r^{2d}$ and $k\in\mbb S$, and $\hat{B}(t)$ is a \( 2 d \)-dimensional Brownian motion. For  any $(\varphi, k, \psi, l) \in \mcl C_r^{2d}\times \mathbb{S} \times \mcl C_r^{2d} \times \mathbb{S}$,  set two \( 4 d \times 4 d \) matrices as follows:
	\[
	\sigma(\varphi(0), \psi(0), k, l)=\left(\begin{array}{cc}
		F(\varphi(0), k) & 0 \\
		0 & F(\psi(0), l)
	\end{array}\right), \quad \sigma(\varphi(0), \psi(0), k)=\left(\begin{array}{cc}
		F(\varphi(0), k) & 0 \\
		F(\psi(0), k) & 0
	\end{array}\right).
	\]
	Set
	\[
	\widetilde{F}\left(t,\varphi(0), \psi(0), k, l\right)=\mathbf{1}_{[0, \tau)}(t) \sigma(\varphi(0), \psi(0), k)+\mathbf{1}_{[\tau, \infty)}(t) \sigma\left(\varphi(0), \psi(0), k, l\right)
	\]
	where $\tau=\inf \left\{t \geq 0 ; \Lambda(t)=\Lambda^{\prime}(t)\right\}$ be the coupling time of $\left(\Lambda(t), \Lambda^{\prime}(t)\right)$. Let the coupling process \( (W(t), W^{\prime}(t)) \) satisfy
	\begin{equation}\label{eq:pijingzhanji}
		\mathrm{d}\left(\begin{array}{c}
			W(t)\\W^{\prime}(t)
		\end{array}\right)=\left(\begin{array}{c}
			D\left(W_{t}, \Lambda(t)\right) \\
			D\left(W_{t}^{\prime}, \Lambda^{\prime}(t)\right)
		\end{array}\right) \mathrm{d} t+\widetilde{F}\left(t,W(t), W(t)^{\prime}, \Lambda(t), \Lambda^{\prime}(t)\right) \mathrm{d} \widetilde{B}(t),
	\end{equation}
	where \( \widetilde{B}(t) \) is a \( 4 d \)-dimensional Brownian motion. It is easy to check that $(W(t), W^{\prime}(t))$ determined by equation \eqref{eq:pijingzhanji} is the independent coupling on $[0, \tau)$ and the basic coupling on $[\tau,\infty)$.  Hence, by the definition of the $L^1$-Wasserstein metric, we only need to show that for any fixed $t>0$,
	\begin{equation}\label{eq:chngshi}
		\mathbb{E}[\lambda(W_t,\Lambda(t),W_t^{\prime},\Lambda^{\prime}(t))]\to 0\quad {\rm as} \quad  \|\varphi-\psi\|_r \rightarrow 0
	\end{equation}
	where $(W_t,W^{\prime}_t)$ is the segment process corresponding to the  coupling process determined by \eqref{eq:pijingzhanji} and $(W_0,\Lambda(0),W_0^{\prime},\Lambda^{\prime}(0))=(\varphi,k,\psi,k) $ with $\varphi\neq \psi$. Note that $\Lambda(t)$ is a Markov chain and  $\Lambda(0)=\Lambda^{\prime}(0)=k$. Hence,  for any $t\geq 0$, $\Lambda(t)=\Lambda^{\prime}(t)$ . By \eqref{eq:mudi}, we can obtain that
	\begin{equation}
		\mathbb{E}[\lambda(W_t,\Lambda(t),W_t^{\prime},\Lambda^{\prime}(t))]=\mathbb{E}\|W_t-W_t^{\prime}\|_r\leq C\|\varphi-\psi\|_r,
	\end{equation}
	which implies the desired assertion.
\end{remark}

For subsequent use, we introduce some notation. Set $D:=D([0,\infty),\mcl C_r^{2d}\times \mbb S)$ and denote by $\mathcal{D}$ the usual $\sigma$-field of $D$. Likewise, for any $T>0$, set $D_{T}:=D([0,T],\mcl C_r^{2d}\times \mbb S)$ and denote by $\mathcal{D}_{T}$ the usual $\sigma$-field of $D_{T}$. Moreover, denote by $\mu_{1}(\cdot)$ the probability distribution induced by $(Z_t,\Theta(t))$ and $\mu_{2}(\cdot)$ the probability distribution induced by $(W_t,\Lambda(t))$ in the path space $(D,\mathcal{D})$, respectively. Denote by $\mu_{1}^{T}(\cdot)$ the restriction of $\mu_{1}(\cdot)$ and $\mu_{2}^{T}(\cdot)$ the restriction of $\mu_{2}(\cdot)$ to $(D_{T},\mathcal{D}_{T})$, respectively. 

\begin{lemma}
	For any given $T>0$, $\mu_{1}^T(\cdot)$ is absolutely continuous with respect to $\mu_{2}^T(\cdot)$ and the corresponding Radon-Nikodym derivative
	$$\frac{\text{d}\mu_{1}^T}{\text{d}\mu_{2}^T}(W_{\cdot},\Lambda(\cdot))=M_{T}(W_{\cdot},\Lambda(\cdot))$$ defined in \eqref{eq:buyi}.
\end{lemma}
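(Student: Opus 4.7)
The plan is to identify the lemma as a direct pushforward consequence of the construction of $\mathbb{P}^{(\varphi,k)}$ carried out in Theorem \ref{thm:weaksolutionS}. Indeed, equation \eqref{eq:RN} defined $\mathbb{P}_T^{(\varphi,k)}(A) = \int_A M_T(W_\cdot,\Lambda(\cdot)) \, \mathrm{d}\widehat{\mathbb{P}}^{(\varphi,k)}$ for every $A \in \mathcal{F}_T$, and we proved that $\mathbb{P}^{(\varphi,k)}$ coincides with $\mathbb{P}_T^{(\varphi,k)}$ on $\mathcal{F}_T$. By standard monotone class / approximation arguments, this identity rewrites as
\[
\mathbb{E}_{\mathbb{P}^{(\varphi,k)}}[\Phi] = \widehat{\mathbb{E}}^{(\varphi,k)}\bigl[\Phi \cdot M_T(W_\cdot,\Lambda(\cdot))\bigr]
\]
for every bounded $\mathcal{F}_T$-measurable $\Phi$.

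Next, I would observe that on the product sample space $\Omega = C([0,\infty),\mcl C_r^{2d}) \times D([0,\infty),\mbb S)$ the processes $(Z_t,\Theta(t))$ and $(W_t,\Lambda(t))$ are both realized as the canonical coordinate process, so the map $\mathrm{ev}_T \colon \Omega \to D_T$ given by restriction of the coordinate path to $[0,T]$ is $\mathcal{F}_T / \mathcal{D}_T$-measurable, and by the very definitions $\mu_1^T = \mathbb{P}^{(\varphi,k)} \circ \mathrm{ev}_T^{-1}$ while $\mu_2^T = \widehat{\mathbb{P}}^{(\varphi,k)} \circ \mathrm{ev}_T^{-1}$. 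Taking $\Phi = \mathbf{1}_{\{\mathrm{ev}_T \in B\}}$ for $B \in \mathcal{D}_T$ in the identity above yields
\[
\mu_1^T(B) = \widehat{\mathbb{E}}^{(\varphi,k)}\bigl[\mathbf{1}_B(\mathrm{ev}_T) \, M_T(W_\cdot,\Lambda(\cdot))\bigr].
\]

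The final step is a change-of-variables under the pushforward, which requires $M_T$ to factor as a $\mathcal{D}_T$-measurable functional $\widetilde{M}_T$ of the restricted path $\mathrm{ev}_T(\omega)$. This is transparent from the explicit formula \eqref{eq:buyi}: $M_T$ is built from the switching times $\tau_i \leq T$, the values $\Lambda(\tau_i)$ and $W_{\tau_i}$, and integrals of $q_{\Lambda(\tau_i)}(W_s)$ over intervals contained in $[0,T]$, all of which are measurable with respect to the Skorokhod $\sigma$-field on $D_T$. Consequently
\[
\mu_1^T(B) = \int_B \widetilde{M}_T(w,\ell) \, \mathrm{d}\mu_2^T(w,\ell), \qquad B \in \mathcal{D}_T,
\]
which establishes both absolute continuity and the claimed form of the Radon-Nikodym derivative.

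The argument contains no genuine obstacle; the only subtlety worth stating carefully is the path-space measurability of $M_T$, since without it one cannot drop from $\mathcal{F}_T$-measurability on $\Omega$ to $\mathcal{D}_T$-measurability on $D_T$. Everything else is a one-line application of the construction in Theorem \ref{thm:weaksolutionS} together with the transport formula for pushforward measures.
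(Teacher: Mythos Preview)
Your argument is correct and is essentially the same as the paper's: the paper simply observes that the lemma follows from Lemma \ref{eq:ninini} and Steps 1--2 of Theorem \ref{thm:weaksolutionS} (i.e., the identity \eqref{eq:RN}), citing \cite{xi2009} for the routine details. You have written out explicitly the pushforward/change-of-variables reasoning that the paper leaves implicit, including the $\mathcal{D}_T$-measurability of $M_T$, which is a helpful clarification but not a different approach.
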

\begin{proof}
	According to Lemma \ref{eq:ninini} and Step 1-2 of Theorem \ref{thm:weaksolutionS}, this lemma is obtained using similar arguments as those in the proof of Lemma 4.2 in \cite{xi2009}.
\end{proof} 

We can establish the following Lemma \ref{strlem:conver} and \ref{strlem:inte} using the methods presented in Lemma 4.3 and 4.4 of \cite{xi2009}, respectively. To do this, let us give an additional assumption.
\begin{enumerate}
	\item [\textbf{(H3)}] There exists a constant $K\geq 0$ such that
	\begin{equation}
		|q_{kl}(\varphi)-q_{kl}(\psi)|\leq K\hat{q}_{kl}\|\varphi-\psi\|_r,
	\end{equation}
	for all $\varphi$, $\psi\in\mcl C_r^{2d}$ and $k\neq l\in\mbb S$.
\end{enumerate}		

\begin{lemma}\label{strlem:conver}
	Suppose that Assumptions $\textbf{(A2)-(A4)}$ and $\textbf{(H1)-(H3)}$ hold. For all $T>0$, we have that
	\begin{equation}
		\mbb E[|M_{T}(W^{(\varphi,k)}_{\cdot}, \Lambda^{(k)}(\cdot))-M_{T}(W^{(\psi,k)}_{\cdot}, \Lambda^{(k)}(\cdot))|]\rightarrow0
	\end{equation}
	as $\|\varphi-\psi\|_r\rightarrow0$.
\end{lemma}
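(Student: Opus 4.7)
The plan is to show that $M_T(W^{(\varphi,k)}_\cdot, \Lambda^{(k)}(\cdot))$ converges to $M_T(W^{(\psi,k)}_\cdot, \Lambda^{(k)}(\cdot))$ in probability as $\|\varphi-\psi\|_r \to 0$, and then use a uniform pathwise bound together with the dominated convergence theorem to upgrade this to $L^1$ convergence under $\widehat{\mbb P}$. A useful simplification is that both copies are driven by the same Poisson random measure, so the switching times $\tau_i$ and the discrete trajectory $\{\Lambda(\tau_i)\}$ are common to the two copies; only the evaluations $q_{kl}(W_{\tau_{i+1}})$ and the integrals $\int q_{\Lambda(\tau_i)}(W_s)\,\d s$ in \eqref{eq:buyi} differ.

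First I would establish the pathwise bound $0 \leq M_T \leq \e^{HT}$. By $\hat q_{kl} = \sup_\varphi q_{kl}(\varphi)$ each factor of the product in \eqref{eq:buyi} lies in $[0,1]$, and by $\textbf{(A4)}$ we have $q_k(\varphi) \leq \hat q_k \leq H$, so that $\hat q_{\Lambda(\tau_i)} - q_{\Lambda(\tau_i)}(W_s) \in [0,H]$; this controls the exponential factor. Next, $\textbf{(H3)}$ yields $|q_{kl}(\varphi)/\hat q_{kl} - q_{kl}(\psi)/\hat q_{kl}| \leq K\|\varphi-\psi\|_r$, and summation over $l$ gives $|q_k(\varphi) - q_k(\psi)| \leq KH\|\varphi-\psi\|_r$. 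Combining these via the telescoping identity for finite products of $[0,1]$-valued factors, together with $|\e^{-x}-\e^{-y}| \leq |x-y|$ for $x,y \geq 0$, one obtains the pathwise bound
\begin{equation*}
\bigl|M_T(W^{(\varphi,k)}_\cdot, \Lambda^{(k)}(\cdot)) - M_T(W^{(\psi,k)}_\cdot, \Lambda^{(k)}(\cdot))\bigr| \leq C \e^{HT}\bigl(n(T) + 1\bigr)\,\xi_{\varphi,\psi}(T),
\end{equation*}
where $\xi_{\varphi,\psi}(T) := \sup_{0\leq t\leq T}\|W^{(\varphi,k)}_t - W^{(\psi,k)}_t\|_r$ and $C$ depends only on $K$, $H$, and $T$.

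To pass to convergence in probability, for any $\varepsilon, \delta > 0$ I would choose $N$ with $\widehat{\mbb P}(n(T) > N) < \delta$, which is possible since $\tau_n \to \infty$ almost surely by \eqref{eq:shuxi}. On the complementary event the right-hand side above is dominated by $C\e^{HT}(N+1)\xi_{\varphi,\psi}(T)$, and Lemma \ref{strlem:basic} ensures $\xi_{\varphi,\psi}(T) \to 0$ in probability as $\|\varphi-\psi\|_r \to 0$. Hence $M_T^\varphi \to M_T^\psi$ in probability, and since the difference is uniformly bounded by $2\e^{HT}$, the dominated convergence theorem yields the claimed $L^1$ convergence. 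The main technical obstacle is the unboundedness of the jump count $n(T)$, which precludes an unconditional Lipschitz-in-$L^1$ estimate; this is defused by the truncation step above, which exploits the almost sure finiteness of $n(T)$ under $\widehat{\mbb P}$ together with the deterministic pathwise bound $\e^{HT}$ on $M_T$.
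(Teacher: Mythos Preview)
Your proposal is correct and uses essentially the same ingredients as the paper's proof---the pathwise bound $0\le M_T\le \e^{HT}$ from \textbf{(A4)} and \eqref{eq:qhat}, the Lipschitz control on the ratios $q_{kl}/\hat q_{kl}$ and on $q_k$ coming from \textbf{(H3)}, a telescoping estimate for the product, the inequality $|\e^{-x}-\e^{-y}|\le|x-y|$ for the exponential factor, and Lemma~\ref{strlem:basic} to kill the continuous-state discrepancy---but you organize them differently. The paper estimates the $L^1$ norm directly: it splits into the events $\{\xi_{\varphi,\psi}(T)\le\varepsilon\}$ and $\{\xi_{\varphi,\psi}(T)>\varepsilon\}$, and on the latter event uses the crude bound $|a_i-b_i|\le 2$ together with the explicit moment estimate $\widehat{\mbb E}[n(T)]\le HT$ (computed from the compensator of $N$) to dominate $\widehat{\mbb E}[n(T)\mathbf{1}_{\{\xi>\varepsilon\}}]$. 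Your route instead first establishes convergence in probability (truncating $n(T)$, which only needs $n(T)<\infty$ a.s.\ from \eqref{eq:shuxi}), and then invokes dominated convergence via the deterministic envelope $2\e^{HT}$. Your version is slightly cleaner in that it never needs the integrability of $n(T)$, only its almost-sure finiteness; the paper's version is more explicit and yields a quantitative bound of the form $C\varepsilon + C\,\widehat{\mbb P}\{\xi>\varepsilon\} + C\,\widehat{\mbb E}[n(T)\mathbf{1}_{\{\xi>\varepsilon\}}]$ before passing to the limit.
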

\begin{proof}
	By \eqref{eq:buyi} and $\textbf{(A4)}$, we obtain that
	\begin{align}\label{eq:s1}
		\mbb E &\nonumber[|M_{T}(W^{(\varphi,k)}_{\cdot}, \Lambda^{(k)}(\cdot))-M_{T}(W^{(\psi,k)}_{\cdot}, \Lambda^{(k)}(\cdot))|]\\\nonumber
		&\leq\e^{HT}\mbb E\left|\prod_{i=0}^{n(T)-1} \frac{q_{\Lambda\left(\tau_{i}\right) \Lambda\left(\tau_{i+1}\right)}\left(W^{(\varphi,k)}_{\tau_{i+1}}\right)}{\hat{q}_{\Lambda\left(\tau_{i}\right) \Lambda\left(\tau_{i+1}\right)}}  \exp \left(-\int_{0}^{T}q_{\Lambda(s)}(W^{(\varphi,k)}_s)\mathrm{d}s\right)\right.\\\nonumber
		&\left.\quad-\prod_{i=0}^{n(T)-1} \frac{q_{\Lambda\left(\tau_{i}\right) \Lambda\left(\tau_{i+1}\right)}\left(W^{(\psi,k)}_{\tau_{i+1}}\right)}{\hat{q}_{\Lambda\left(\tau_{i}\right) \Lambda\left(\tau_{i+1}\right)}}  \exp \left(-\int_{0}^{T}q_{\Lambda(s)}(W^{(\psi,k)}_s)\mathrm{d}s\right)\right|\\\nonumber
		&\leq \e^{HT}\mbb E\left[\left|\prod_{i=0}^{n(T)-1} \frac{q_{\Lambda\left(\tau_{i}\right) \Lambda\left(\tau_{i+1}\right)}\left(W^{(\varphi,k)}_{\tau_{i+1}}\right)}{\hat{q}_{\Lambda\left(\tau_{i}\right) \Lambda\left(\tau_{i+1}\right)}}  \left[\exp \left(-\int_{0}^{T}q_{\Lambda(s)}(W^{(\varphi,k)}_s)\mathrm{d}s\right)-\exp \left(-\int_{0}^{T}q_{\Lambda(s)}(W^{(\psi,k)}_s)\mathrm{d}s\right)\right]\right|\right.\\\nonumber
		&\quad \left.+\left|\left(\prod_{i=0}^{n(T)-1} \frac{q_{\Lambda\left(\tau_{i}\right) \Lambda\left(\tau_{i+1}\right)}\left(W^{(\varphi,k)}_{\tau_{i+1}}\right)}{\hat{q}_{\Lambda\left(\tau_{i}\right) \Lambda\left(\tau_{i+1}\right)}}-\prod_{i=0}^{n(T)-1} \frac{q_{\Lambda\left(\tau_{i}\right) \Lambda\left(\tau_{i+1}\right)}\left(W^{(\psi,k)}_{\tau_{i+1}}\right)}{\hat{q}_{\Lambda\left(\tau_{i}\right) \Lambda\left(\tau_{i+1}\right)}}\right)  \exp \left(-\int_{0}^{T}q_{\Lambda(s)}(W^{(\psi,k)}_s)\mathrm{d}s\right)\right|\right]\\
		&\leq \e^{HT}\left[\mbb E\left|\exp \left(-\int_{0}^{T}q_{\Lambda(s)}(W^{(\varphi,k)}_s)\mathrm{d}s\right)-\exp \left(-\int_{0}^{T}q_{\Lambda(s)}(W^{(\psi,k)}_s)\mathrm{d}s\right)\right|\right. \\\nonumber
		&\left.\quad +\mbb E\left|\prod_{i=0}^{n(T)-1} \frac{q_{\Lambda\left(\tau_{i}\right) \Lambda\left(\tau_{i+1}\right)}\left(W^{(\varphi,k)}_{\tau_{i+1}}\right)}{\hat{q}_{\Lambda\left(\tau_{i}\right) \Lambda\left(\tau_{i+1}\right)}}-\prod_{i=0}^{n(T)-1} \frac{q_{\Lambda\left(\tau_{i}\right) \Lambda\left(\tau_{i+1}\right)}\left(W^{(\psi,k)}_{\tau_{i+1}}\right)}{\hat{q}_{\Lambda\left(\tau_{i}\right) \Lambda\left(\tau_{i+1}\right)}}\right|\right]\\\nonumber
		&=:\e^{HT}((\rm I)+(\rm{II})).\nonumber
	\end{align}
	For any given $\varepsilon>0$, by $\textbf{(A4),(H3)}$ and the inequality $|\exp(-a)-\exp(-b)|\leq|a-b|$ for any $a$, $b\geq0$, we obtain
	\begin{align}\label{eq:I}
		(\rm{I})&\nonumber\leq\mbb E\int_{0}^{T}\left|q_{\Lambda(s)}(W^{(\varphi,k)}_s)-q_{\Lambda(s)}(W^{(\psi,k)}_s)\right|\mathrm{d}s\nonumber\\
		&\leq\mbb E\int_{0}^{T}\sum\limits_{l\neq \Lambda(s)}\left|q_{\Lambda(s)l}(W^{(\varphi,k)}_s)-q_{\Lambda(s)l}(W^{(\psi,k)}_s)\right|\mathrm{d}s\\
		&\leq \mbb E\int_{0}^{T}\sum\limits_{l\neq \Lambda(s)}\left(\varepsilon K\hat{q}_{\Lambda(s)l}\mbf{1}_{\left\{\sup\limits_{0\leq t\leq T}\|W_t^{(\varphi,k)}-W_t^{(\psi,k)}\|_r\leq\varepsilon\right\}}+2\hat{q}_{\Lambda(s)l}\mbf{1}_{\left\{\sup\limits_{0\leq t\leq T}\|W_t^{(\varphi,k)}-W_t^{(\psi,k)}\|_r\geq\varepsilon\right\}}\right)\mathrm{d}s\nonumber\\
		&\leq  \varepsilon KHT+2HT\mbb P\left\{\sup\limits_{0\leq t\leq T}\|W_t^{(\varphi,k)}-W_t^{(\psi,k)}\|_r\geq\varepsilon\right\}.\nonumber
	\end{align}
	In addition, using the obvious inequality
	\begin{align*}
		\left|\prod_{i=1}^{m}a_i-\prod_{i=1}^{m}b_i\right|\leq m\left(\max\limits_{1\leq i\leq m}\{a_i, b_i\}\right)^{m-1}\max\limits_{1\leq i\leq m}|a_i-b_i|
	\end{align*}
	for any two positive sequences of $\{a_i\}_{i=1}^{m}$ and $\{b_i\}_{i=1}^{m}$, and by $\textbf{(H3)}$, we obtain that
	\begin{equation}\label{eq:II}
		\begin{aligned}               (\rm{II})\leq&\mbb E\left[\sum\limits_{n=0}^{\infty}n\mbf{1}_{\{n(T)=n\}}\cdot\max_{ 0\leq i\leq n-1}\left| \frac{q_{\Lambda\left(\tau_{i}\right) \Lambda\left(\tau_{i+1}\right)}\left(W^{(\varphi,k)}_{\tau_{i+1}}\right)}{\hat{q}_{\Lambda\left(\tau_{i}\right) \Lambda\left(\tau_{i+1}\right)}}-\frac{q_{\Lambda\left(\tau_{i}\right) \Lambda\left(\tau_{i+1}\right)}\left(W^{(\psi,k)}_{\tau_{i+1}}\right)}{\hat{q}_{\Lambda\left(\tau_{i}\right) \Lambda\left(\tau_{i+1}\right)}}\right|\right]\\
			\leq &\mbb E\left[\sum\limits_{n=0}^{\infty}n\mbf{1}_{\{n(T)=n\}}\left(\varepsilon K\mbf{1}_{\left\{\sup\limits_{0\leq t\leq T}\|W_t^{(\varphi,k)}-W_t^{(\psi,k)}\|_r\leq\varepsilon\right\}}+2\mbf{1}_{\left\{\sup\limits_{0\leq t\leq T}\|W_t^{(\varphi,k)}-W_t^{(\psi,k)}\|_r\geq\varepsilon\right\}}\right)\right]\\
			\leq &\varepsilon K\mbb E[n(T)]+2\mbb E\left[n(T)\mbf{1}_{\left\{\sup\limits_{0\leq t\leq T}\|W_t^{(\varphi,k)}-W_t^{(\psi,k)}\|_r\geq\varepsilon\right\}}\right].
		\end{aligned}
	\end{equation}
	Note that $n(T)$ is the number of jumps of trajectories of $\Lambda(t)$ prior to $T$. Hence we have
	\begin{equation}\label{eq:shuijiao}
		\begin{aligned}
			\mbb E[n(T)]=&\mbb E\left[\sum\limits_{s\leq T}\mbf{1}_{\{\Lambda(s)\neq\Lambda(s-)\}}\right]
			=\sum\limits_{s\leq T}\mbb E\left[\mbb E\left[\mbf{1}_{\{\Lambda(s)\neq\Lambda(s-)\}}|\Lambda(s-)\right]\right]\\
			=&\sum\limits_{s\leq T}\mbb E\left[\sum\limits_{l\neq\Lambda(s-)}\mbb P\left(\Lambda(s)=l|\Lambda(s-)\right)\right]=\sum\limits_{s\leq T}\mbb E\left[\sum\limits_{l\neq\Lambda(s-)}\hat{q}_{\Lambda(s-)l}\right]\\
			=&\mbb E\left[\int_{0}^{T}\hat{q}_{\Lambda(s-)}\d s\right]\leq HT<\infty.
		\end{aligned}  
	\end{equation}
	
	Combining \eqref{eq:s1}-\eqref{eq:shuijiao}  leads to 
	\begin{equation}\label{eq:zuihou}
		\begin{aligned}
			\mbb E &[|M_{T}(W^{(\varphi,k)}_{\cdot}, \Lambda^{(k)}(\cdot))-M_{T}(W^{(\psi,k)}_{\cdot}, \Lambda^{(k)}(\cdot))|]\\
			&\leq  2\varepsilon KHT+2HT\mbb P\left\{\sup\limits_{0\leq t\leq T}\|W_t^{(\varphi,k)}-W_t^{(\psi,k)}\|_r\geq\varepsilon\right\} + 2\mbb E\left[n(T)\mbf{1}_{\left\{\sup\limits_{0\leq t\leq T}\|W_t^{(\varphi,k)}-W_t^{(\psi,k)}\|_r\geq\varepsilon\right\}}\right].
		\end{aligned}
	\end{equation}
	From this and Lemma \ref{strlem:basic}, we derive that the second and third term in \eqref{eq:zuihou} tends to zero as $\|\varphi-\psi\|_r\rightarrow0$. At the same time, we also know that the first term in \eqref{eq:zuihou} can be arbitrarily small since the multiplier $\varepsilon$ is arbitrary. The proof is complete.
\end{proof}

\begin{lemma}\label{strlem:inte}
	Suppose that Assumption $\textbf{(A4)}$ holds. For all $T>0$ and $(\varphi,k)\in \mcl C_r^{2d}\times \mbb S$, $M_{T}(W^{(\varphi,k)}_{\cdot}, \Lambda^{(k)}(\cdot))$ is integrable.
\end{lemma}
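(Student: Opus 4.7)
The plan is to obtain a deterministic upper bound on $M_{T}$ and then take expectations. The key observation is that both factors in the definition \eqref{eq:buyi} of $M_T$ admit elementary upper bounds under \textbf{(A4)}: the product of ratios is bounded above by $1$ (since $\hat q_{kl}$ is defined in \eqref{eq:qhat} as the supremum of $q_{kl}(\varphi)$ over $\varphi$), and the exponential factor is bounded above by $e^{HT}$.

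First I would verify the ratio bound. For any $k\neq l$ and $\varphi\in\mcl C_r^{2d}$, the choice $\hat q_{kl}=\sup_{\psi\in\mcl C_r^{2d}}q_{kl}(\psi)$ gives $q_{kl}(\varphi)/\hat q_{kl}\leq 1$ whenever $\hat q_{kl}>0$ (with the convention $0/0=0$ built into the formula). Hence the product $\prod_{i=0}^{n(T)-1}\frac{q_{\Lambda(\tau_i)\Lambda(\tau_{i+1})}(W_{\tau_{i+1}})}{\hat q_{\Lambda(\tau_i)\Lambda(\tau_{i+1})}}\leq 1$ pathwise.

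Next I would control the exponential factor. Since $q_k(\varphi)=\sum_{l\neq k}q_{kl}(\varphi)\leq \sum_{l\neq k}\hat q_{kl}=\hat q_k$, the integrand satisfies $0\leq \hat q_{\Lambda(\tau_i)}-q_{\Lambda(\tau_i)}(W_s)\leq \hat q_{\Lambda(\tau_i)}\leq H$ by \textbf{(A4)}. Because the partition $\{[\tau_i,\tau_{i+1}\wedge T]\}_{i=0}^{n(T)}$ covers $[0,T]$ with total length $T$, telescoping yields
\[
\exp\!\Bigl(-\sum_{i=0}^{n(T)}\int_{\tau_i}^{\tau_{i+1}\wedge T}\bigl[q_{\Lambda(\tau_i)}(W_s)-\hat q_{\Lambda(\tau_i)}\bigr]\mathrm{d}s\Bigr)\leq e^{HT}.
\]

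Combining the two bounds gives $M_T(W^{(\varphi,k)}_{\cdot},\Lambda^{(k)}(\cdot))\leq e^{HT}$ almost surely, so $\mathbb{E}[M_T(W^{(\varphi,k)}_{\cdot},\Lambda^{(k)}(\cdot))]\leq e^{HT}<\infty$, which is the desired integrability. There is no serious obstacle here; the only mild subtlety is the bookkeeping on intervals $[\tau_i,\tau_{i+1}\wedge T]$ when $\tau_{n(T)+1}>T$, but this is handled automatically by the definition of $n(T)$. (One may alternatively invoke Lemma \ref{eq:ninini}, which already establishes $\widehat{\mathbb{E}}[M_T]=1$; the direct pathwise bound above is however more elementary and gives a stronger a.s.\ estimate.)
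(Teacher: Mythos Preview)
Your proof is correct and is essentially identical to the paper's own argument: both bound the product of ratios by $1$ (using the definition of $\hat q_{kl}$ as a supremum) and the exponential factor by $e^{HT}$ (using \textbf{(A4)}), then conclude $\mathbb{E}[M_T]\leq e^{HT}<\infty$. The only cosmetic difference is that you state the pathwise bound $M_T\leq e^{HT}$ a.s.\ explicitly before taking expectations, whereas the paper carries the expectation throughout.
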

\begin{proof}
	By $\textbf{(A4)}$ and \eqref{eq:buyi}, we obtain that
	\begin{align*}
		\mbb E [M_{T}(W^{(\varphi,k)}_{\cdot}, \Lambda^{(k)}(\cdot))]=&\mbb E\left[\prod_{i=0}^{n(T)-1} \frac{q_{\Lambda\left(\tau_{i}\right) \Lambda\left(\tau_{i+1}\right)}\left(W^{(\varphi,k)}_{\tau_{i+1}}\right)}{\hat{q}_{\Lambda\left(\tau_{i}\right) \Lambda\left(\tau_{i+1}\right)}}  \exp \left(-\sum_{i=0}^{n(T)} \int_{\tau_{i}}^{\tau_{i+1} \wedge T}\left[q_{\Lambda\left(\tau_{i}\right)}(W^{(\varphi,k)}_s)-\hat{q}_{\Lambda\left(\tau_{i}\right)}\right] \mathrm{d} s\right)\right]\\
		\leq&\e^{HT}\mbb E\left[\prod_{i=0}^{n(T)-1} \frac{q_{\Lambda\left(\tau_{i}\right) \Lambda\left(\tau_{i+1}\right)}\left(W^{(\varphi,k)}_{\tau_{i+1}}\right)}{\hat{q}_{\Lambda\left(\tau_{i}\right) \Lambda\left(\tau_{i+1}\right)}}\right]\leq \e^{HT}< +\infty,
	\end{align*}
	which implies the desired integrability. This completes the proof.
\end{proof}
Now we present the main results of this section.
\begin{theorem}\label{thm:strongfeller}
	Suppose that Assumptions $\textbf{(A2)-(A4)}$ and $\textbf{(H1)-(H3)}$ hold. Then $(Z_t, \Theta(t))$ has the Feller property.
\end{theorem}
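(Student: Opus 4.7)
The plan is to transfer the problem from the state-dependent-switching process $(Z_t, \Theta(t))$ to the Markovian-switching process $(W_t, \Lambda(t))$ via the Radon--Nikodym derivative $M_T$ of \eqref{eq:buyi}, and then to estimate the resulting discrepancy by combining Lemmas \ref{strlem:basic} and \ref{strlem:conver} with the $L^2$-bound \eqref{eq:squinte}. Fix $f \in C_b(\mcl C_r^{2d} \times \mbb S)$ and $t \geq 0$. Since $\mbb S$ carries the discrete topology, Feller continuity reduces to showing that $\varphi \mapsto \mbb E^{(\varphi,k)}[f(Z_t, \Theta(t))]$ is continuous on $\mcl C_r^{2d}$ for every fixed $k \in \mbb S$.

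By the Radon--Nikodym identity $\d\mu_1^t/\d\mu_2^t = M_t(W_\cdot, \Lambda(\cdot))$ established just before Lemma \ref{strlem:conver}, we have
\begin{equation*}
\mbb E^{(\varphi,k)}[f(Z_t, \Theta(t))] = \widehat{\mbb E}^{(\varphi,k)}\bigl[f(W_t, \Lambda(t))\, M_t(W_\cdot, \Lambda(\cdot))\bigr].
\end{equation*}
Because the reference matrix $\widehat{Q}$ is independent of $W$, for each fixed initial mode $k$ the chain $\Lambda^{(k)}(\cdot)$ can be realized independently of $\varphi$; together with pathwise uniqueness for \eqref{eq:leilei} (which holds in view of the Lipschitz reformulation \eqref{eq:tildeU}--\eqref{eq:tildeV}), this lets us realize the whole family $\{W^{(\varphi,k)}_\cdot : \varphi \in \mcl C_r^{2d}\}$ as functionals of a single noise pair $(B, N)$ on a common probability space. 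The triangle inequality then yields
\begin{equation*}
\bigl|\mbb E^{(\varphi,k)}[f(Z_t, \Theta(t))] - \mbb E^{(\psi,k)}[f(Z_t, \Theta(t))]\bigr| \leq I_1(\varphi,\psi) + I_2(\varphi,\psi),
\end{equation*}
where $I_1 := \mbb E\bigl[\,|f(W^{(\varphi,k)}_t, \Lambda^{(k)}(t)) - f(W^{(\psi,k)}_t, \Lambda^{(k)}(t))|\, M_t(W^{(\varphi,k)}_\cdot, \Lambda^{(k)}(\cdot))\bigr]$ and $I_2 := \|f\|_\infty\, \mbb E|M_t(W^{(\varphi,k)}_\cdot, \Lambda^{(k)}(\cdot)) - M_t(W^{(\psi,k)}_\cdot, \Lambda^{(k)}(\cdot))|$. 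The term $I_2$ tends to $0$ as $\|\varphi - \psi\|_r \to 0$ directly by Lemma \ref{strlem:conver}.

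For $I_1$, I would apply Cauchy--Schwarz to split the $f$-difference from the weight $M_t$:
\begin{equation*}
I_1(\varphi,\psi) \leq \Bigl(\mbb E|f(W^{(\varphi,k)}_t, \Lambda^{(k)}(t)) - f(W^{(\psi,k)}_t, \Lambda^{(k)}(t))|^2\Bigr)^{1/2} \Bigl(\mbb E[M_t(W^{(\varphi,k)}_\cdot, \Lambda^{(k)}(\cdot))^2]\Bigr)^{1/2}.
\end{equation*}
By \eqref{eq:squinte}, extended from $t \wedge T_n$ to $t$ via Fatou's lemma as in the final step of the proof of Lemma \ref{eq:ninini}, the second factor is dominated by $\sqrt{2\e^{8Ht}}$, a bound that depends only on $H$ and $t$. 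Meanwhile Lemma \ref{strlem:basic} gives $W^{(\varphi,k)}_t \to W^{(\psi,k)}_t$ in $\mcl C_r^{2d}$ in probability, so by continuity and boundedness of $f$ (and the fact that $\Lambda^{(k)}(t)$ is common to both evaluations), the integrand of the first factor converges to $0$ in probability while staying bounded by $4\|f\|_\infty^2$; the bounded convergence theorem drives the first factor to $0$.

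The main obstacle I anticipate is securing the uniform-in-$\varphi$ second-moment bound on $M_t$ needed for Cauchy--Schwarz; without it, the singular H\"older coefficient $b_2$ could leave an uncontrolled multiplier in front of the $L^2$-convergence of the $f$-difference. Fortunately the bound from \eqref{eq:squinte} provides exactly this, and combining the two estimates gives $|\mbb E^{(\varphi,k)}[f(Z_t,\Theta(t))] - \mbb E^{(\psi,k)}[f(Z_t,\Theta(t))]| \to 0$ as $\|\varphi-\psi\|_r \to 0$, establishing the Feller property.
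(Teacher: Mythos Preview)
Your proof is correct and follows the same overall strategy as the paper: transfer to the Markovian-switching process via the Radon--Nikodym derivative $M_t$, then split into a term controlled by Lemma~\ref{strlem:conver} (your $I_2$, the paper's term (I)) and a term controlled by Lemma~\ref{strlem:basic} (your $I_1$, the paper's terms (II)+(III)).

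The one genuine difference is how $I_1$ is handled. The paper introduces an $\varepsilon$-threshold, splitting $I_1$ into a piece on $\{|f(W^{(\varphi,k)}_t,\Lambda)-f(W^{(\psi,k)}_t,\Lambda)|\geq\varepsilon\}$ and a remainder of size $\varepsilon\,\mbb E[M_t]$; the former goes to zero by absolute continuity of the integral of the fixed integrable weight $M_t(W^{(\psi,k)}_\cdot,\Lambda(\cdot))$ (Lemma~\ref{strlem:inte}) against a set whose probability vanishes. You instead apply Cauchy--Schwarz and invoke the $L^2$-bound $\widehat{\mbb E}[M_t^2]\leq 2\e^{8Ht}$ from \eqref{eq:squinte}, which is uniform in the initial data; then bounded convergence on the $f$-difference finishes. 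Your route is slightly more direct and avoids the $\varepsilon$-bookkeeping, at the price of using the stronger square-integrability of $M_t$ rather than mere integrability---but since the paper already proves square-integrability in Lemma~\ref{eq:ninini}, this is free.
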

\begin{proof}
	To prove the desired Feller property, it is enough to prove that for any $t>0$ and any bounded continuous function $f$ on $\mcl C_r^{2d}\times \mbb S$, $ \mbb E[f(Z_t^{(\varphi,k)},\Theta^{(k)}(t))]$ is bounded continuous in both $\varphi\in \mcl C_r^{2d}$ and $k\in \mbb S$. Since $\mbb S$ has a discrete metric, it is sufficient to prove that
	\begin{equation*}
		\left|\mbb E\left[f\left(Z_t^{(\varphi,k)},\Theta^{(k)}(t)\right)\right]-\mbb E\left[f\left(Z_t^{(\psi,k)},\Theta^{(k)}(t)\right)\right]\right|\rightarrow0
	\end{equation*}
	as $\|\varphi-\psi\|_r\rightarrow0$. Indeed, by the Radon-Nikodym derivative, for all $(\varphi,k)\in \mcl C_r^{2d}\times \mbb S$,
	\begin{equation*}
		\begin{split}
			&\mbb E\left[f\left(Z_t^{(\varphi,k)},\Theta^{(k)}(t)\right)\right]=\mbb E\left[f\left(W_t^{(\varphi,k)},\Lambda^{(k)}(t)\right)\cdot M_{t}\left(W_{\cdot}^{(\varphi,k)},\Lambda^{(k)}(\cdot)\right)\right].
		\end{split}
	\end{equation*}
	It follows from Lemma \ref{strlem:basic} that
	\begin{equation}\label{eq:12}
		f(W^{(\varphi,k)}_t,\Lambda^{(k)}(t))\rightarrow f(W^{(\psi,k)}_t,\Lambda^{(k)}(t))\qquad \text{in probability}
	\end{equation}
	as $\|\varphi-\psi\|_r\rightarrow0$. Therefore, for any given $\varepsilon>0$, we have
	\begin{equation}\label{streq:longeq}
		\begin{aligned}
			\Big|\mbb E&\left[f\left(Z_t^{(\varphi,k)},\Theta^{(k)}(t)\right)\right]-\mbb E\left[f\left(Z_t^{(\psi,k)},\Theta^{(k)}(t)\right)\right]\Big|\\
			&\leq \mbb E\left|f\left(W_t^{(\varphi,k)},\Lambda^{(k)}(t)\right)\cdot M_{t}\left(W_{\cdot}^{(\varphi,k)},\Lambda^{(k)}(\cdot)\right)-f\left(W_t^{(\psi,k)},\Lambda^{(k)}(t)\right)\cdot M_{t}\left(W_{\cdot}^{(\psi,k)},\Lambda^{(k)}(\cdot)\right)\right|\\
			&\leq\|f\| \mbb E\left|M_{t}\left(W_{\cdot}^{(\varphi,k)},\Lambda^{(k)}(\cdot)\right)-M_{t}\left(W_{\cdot}^{(\psi,k)},\Lambda^{(k)}(\cdot)\right)\right|\\
			&\quad+2\|f\| \mbb E\left[M_{t}\left(W_{\cdot}^{(\psi,k)},\Lambda^{(k)}(\cdot)\right)\mathbf{1}_{\{|f\left(W_t^{(\varphi,k)},\Lambda^{(k)}(t)\right)-f\left(W_t^{(\psi,k)},\Lambda^{(k)}(t)\right)|\geq\varepsilon\}}\right]\\
			&\quad+\varepsilon \mbb E\left[M_{t}\left(W_{\cdot}^{(\psi,k)},\Lambda^{(k)}(\cdot)\right)\right]\\
			&=\mathrm{(I)}+\mathrm{(II)}+\mathrm{(III)},
		\end{aligned}
	\end{equation}
	where $\|f\|:=\sup\{|f(\varphi,k)|:(\varphi,k)\in \mcl C_r^{2d}\times \mbb S\}$. From Lemma \ref{strlem:conver}, term $\mathrm{(I)}$ in \eqref{streq:longeq} tends to zero as $\|\varphi-\psi\|_r\rightarrow0$. From Lemma \ref{strlem:inte} and \eqref{eq:12}, we derive that term $\mathrm{(II)}$ in \eqref{streq:longeq}  also tends to zero as $\|\varphi-\psi\|_r\rightarrow0$. Meanwhile, term $\mathrm{(III)}$ in \eqref{streq:longeq}  can be arbitrarily small since the multiplier $\varepsilon$ is arbitrary and $M_{t}(W_{\cdot}^{(\psi,k)},\Lambda^{(k)}(\cdot))$ is integrable by Lemma \ref{strlem:inte}. The proof is complete.
\end{proof}

\section{Concluding Remarks}
Motivated by the increasing need of modeling complex systems, this paper is devoted to the investigation of a class of functional stochastic Hamiltonian systems with singular coefficients and countable regimes. By using Girsanov’s transformation, we have established the martingale solution of the system in each fixed switching case. Since the discrete component is dependent of the continuous component, we have investigated the special case when the discrete component is independent of the continuous component. Furthermore, with the help of a martingale process, we have obtained the well-posedness of the system. Moreover, we have used appropriate Radon-Nikodym derivative and a vector valued elliptic equation to derive the Feller property.

A number of other problems deserve further investigation. For example, when the history appears in the diffusion coefficient $\sigma$, the system becomes more difficult to deal with. Additionally, the work on the asymptotically strong Feller property and exponential ergodicity for the segment process is still scare but is required in numerous applications. This will be considered in our future work.

\section*{Declarations}
\noindent Funding: This work was supported in part by the National Natural Science Foundation of China under Grant No. 12071031.

\bigskip\noindent
Conflict of interest/Competing interests: All authors certify that they have no affiliations with or involvement in any organization or entity with any financial interest or non-financial interest in the subject matter or materials discussed in this manuscript.

\bigskip\noindent
Data availability: Data sharing is not applicable to this article as no datasets were generated or analyzed during the current study.

\bibliographystyle{plain}
\bibliography{LaTeXSource}

\end{document}